\newcommand{\cqfd}{\; \;\underset{\Box}{}}
\newcommand{\R}{\textrm{I\hspace{-.19em}R}}
\newcommand{\w}{\widetilde}
\renewcommand{\t}{\tilde}
\newcommand{\wg}{\tilde{g}}
\newcommand{\nw}{\tilde{\nabla}}
\newcommand{\contrainte}{\mathcal{C}}
\renewcommand{\d}{\partial}
\newcommand{\Bz}{\mathring{B}}
\newcommand{\Ps}{P^{*}}
\newcommand{\Pgpi}{P_{(g,\pi)}}
\newcommand{\Pgpis}{P_{(g,\pi)}^{*}}
\newcommand{\Psw}{\widetilde{P}^{*}}
\newcommand{\Pgpisw}{P_{(\t{g},\t{\pi})}^{*}}
\newcommand{\Sz}{\mathring{S}}
\newcommand{\Uz}{\mathring{U}}
\newcommand{\phibf}{\mathbf{\Phi}}
\newcommand{\phibfs}{\mathbf{\Phi}^{*}}
\newcommand{\phisun}{\mathbf{\Phi}^{*}_{1}}
\newcommand{\phisdeux}{\mathbf{\Phi}^{*}_{2}}
\newcommand{\phiwsun}{\widetilde{\mathbf{\Phi}}^{*}_{1}}
\newcommand{\phiwsdeux}{\widetilde{\mathbf{\Phi}}^{*}_{2}}
\newcommand{\phio}{\mathbf{\Phi_{0}}}
\newcommand{\phiogpi}{\mathbf{\Phi_{0}}(g, \pi)}
\newcommand{\phigpi}{\mathbf{\Phi}(g, \pi)}
\newcommand{\phigpiws}{\mathbf{\Phi}(\t{g},\t{\pi})^{*}}
\newcommand{\phiogs}{\mathbf{\Phi_{0}}(g)^{*}}
\newcommand{\phiogpis}{\mathbf{\Phi_{0}}(g,\pi)^{*}}
\newcommand{\phiigpis}{\mathbf{\Phi_{i}}(g,\pi)^{*}}
\newcommand{\phigpis}{\mathbf{\Phi}(g,\pi)^{*}}
\newcommand{\phii}{\mathbf{\Phi_{i}}}
\newcommand{\phiigpi}{\mathbf{\Phi_{i}}(g,\pi)}
\newcommand{\nat}{\mathbb{N}}
\newcommand{\G}{\mathcal{G}}
\newcommand{\Gplus}{\mathcal{G}^{+}}
\renewcommand{\L}{\mathcal{L}}
\renewcommand{\S}{\mathcal{S}}
\newcommand{\T}{\mathcal{T}}
\newcommand{\K}{\mathcal{K}}
\newcommand{\F}{\mathcal{F}}
\newcommand{\precig}[1]{\hspace{0.05cm}^{(#1)}\!}
\renewcommand{\epsilon}{\varepsilon}
\newcommand{\cde}[3]{\Gamma_{#1 \; #3}^{\hspace{0.1cm} #2}}
\newcommand{\wcde}[3]{\widetilde{\Gamma}_{#1 \; #3}^{\hspace{0.1cm} #2}}
\newcommand{\ade}[3]{A_{#1 \; #3}^{\hspace{0.1cm} #2}}
\newcommand{\wade}[3]{\widetilde{A}_{#1 \; #3}^{\hspace{0.1cm} #2}}
\newcommand{\cdez}[3]{\mathring{\Gamma}_{#1 \; #3}^{\hspace{0.1cm} #2}}
\newcommand{\infeg}{\leqslant}
\newcommand{\A}{\mathcal{A}}
\newcommand{\supeg}{\geqslant}
\newcommand{\cinf}{\mathscr{C}^{\infty}}
\newcommand{\demi}{\frac{1}{2}}
\newcommand{\tdemi}{\tfrac{1}{2}}
\newcommand{\Norm}[2]{||{#1}||_{#2}}
\newcommand{\bNorm}[2]{\Big\| {#1} \Big\|_{#2}}
\newcommand{\BNorm}[2]{\bigg\| {#1} \bigg\|_{#2}}
\newcommand{\norm}[2]{|{#1}|_{#2}}
\newcommand{\leb}[2]{{L}^{#1}_{#2}}
\newcommand{\sob}[3]{W^{#1,#2}_{#3}}
\newcommand{\m}{\mathcal{M}}
\newcommand{\N}{\mathcal{N}}
\newcommand{\ie}{\emph{i.e.} \,}
\newcommand{\gz}{\mathring{g}}
\newcommand{\Kz}{\mathring{K}}
\newcommand{\piz}{\mathring{\pi}}
\newcommand{\n}{\nabla}
\newcommand{\nz}{\mathring{\nabla}}
\newcommand{\Tz}{\mathring{T}}
\newcommand{\wn}{\widetilde{\nabla}}
\newcommand{\wlapla}{\widetilde{\Delta}}
\newcommand{\laplaz}{\mathring{\Delta}}
\newcommand{\laplag}{\Delta_{g}}
\newcommand{\quotient}[2]{#1_{\! \diagup_{\! #2}}}
\newcommand{\Riem}{\operatorname{Riem}}
\newcommand{\Ric}{\operatorname{Ric}}
\renewcommand{\div}{\operatorname{div}}
\newcommand{\tr}{\operatorname{tr}}
\newcommand{\Coker}{\operatorname{Coker}}
\renewcommand{\Im}{\operatorname{Im}}
\newtheorem{lem}{Lemma}
\newtheorem{thm}{Theorem}
\newtheorem{cor}{Corollary}
\newtheorem{prop}{Proposition}
\newtheorem{defi}{Definition}
\newtheorem{remark}{Remark}
\title[On Bartnik Hilbert manifold structure ]{ Bartnik Hilbert manifold structure on   fibers of the scalar curvature and the constraint operator }
\author[E.  Delay]{Erwann
Delay} \address{Erwann Delay, Avignon Universit\'e, Laboratoire de Math\'ematiques d'Avignon (EA 2151), 301 rue Baruch de Spinoza,
F-84916 Avignon, France}
\email{Erwann.Delay@univ-avignon.fr}
\urladdr{http://www.math.univ-avignon.fr}
\date{\today}
\newcounter{mnotecount}[section]
\newcommand{\kk}{\kappa}
\begin{document}

\begin{abstract}
We adapt the Bartnik method to provide a Hilbert manifold structure  for the space of  solutions, without KID's, to the vacuum constraint equations on compact  manifold of any dimension $\geq 3$. 
In the course, we prove that some fibers of the scalar curvature or the constraint  operator  are Hilbert submanifolds.
We also study some operators and  inequalities related to  the KID's operator. Finally we comment the adaptation to some non compact manifolds.
\end{abstract}

\maketitle

\tableofcontents

\noindent {\bf Keywords  } : Hilbert manifold, elliptic operators , general relativity,
constraint equations, weak regularity.

%\bf 2010 MSC : 53C21, 53A45, 53A30,  58J05, 35J61

\section{Introduction}
The linearisation stability studies of Fischer, Marsden and Moncrief, started in 1975 (see \cite{FM1975, FM1979, FMM1980 }) implies the existence of a Fr\'echet manifold structure (modelled on $C^{\infty}$) for the set of solutions of the vacuum constraint equations on a compact manifold.
A similar structure has been proven by Andersson in the asymptotically flat setting in 1987,  \cite{Andersson1987}.
With P. Chru\'sciel, we obtained a Banach manifold structure (modelled on $C^{k,\alpha}$) of such set of solutions  in three classical context of compact,  or asymptotically flat ,or asymptotically hyperbolic manifolds in 2004, \cite{CD2004}.
At the same time R. Bartnik, provided a Hilbert manifold structure (modelled on $H^2\times H^1$) on three dimensional  asymptotically flat manifolds (\cite{Bartnik2005}, see also \cite{McCormick2014},\cite{McCormick2015}, \cite{RaiSaraykar2016} for some adaptation to coupled equations in dimension 3). The  weak regularity assumptions concerning the metric involved (curvature only in $\leb{2}{}$ in dimension 3)  can be related to the context of the bounded $L^{2}$ curvature  of 
S. Klainerman, I. Rodnianski and J.
Szeftel \cite{KRS2015}.
Note also that  {R.} Bartnik showed in \cite{Bartnik1986} that these assumptions on the regularity are the weakest possible to define the ADM mass of the manifold.

With J. Fougeirol, we obtained an asymptotically hyperbolic version of the Bartnik   Hilbert structure \cite{DF2016} on 3-manifolds. In the course we had to introduce and study two natural operators of second order $T$ and $U$ in order to overcome special difficulties  to this asymptotic.

We could   also mention here the weak  {\it  local } (around a more regular solution) Banach manifold  structure modelled on $W^{2,p}\times W^{1,p}$ ($p>n$) for AF manifold,  which follows  from lemma 2.10 in \cite{HuangLee2019} (which corrected a {\it global} structure affirmation of the papers cited as [8] and [11] there, see remark 2.11 of  \cite{HuangLee2019}).

The goal of the present paper is to obtain a Hilbert manifold structure, modelled on $H^{k+2}\times H^{k+1}$, on an $n$-dimensional compact manifold for $ k +2>\frac n2$.

Thus, in the compact setting, we obtain a non trivial generalization  of the Bartnik- Hilbert structure either for the dimension but also for the regularity.

Before going to  the main theorem, let us introduce some notations. The constraint operator $\phibf$ we are studying act on couples of the form $(g, \pi)$ where $g$ is Riemannian metric and  the field $\pi$
is a contravariant symmetric two tensor field valued in the n-forms. Its thorough definition is given in section \ref{sectionopecontrainte}, but  let us write here 
$\phibf (g,\pi)$ as follows:
\begin{eqnarray*}
\phiogpi &:=& \left(R(g) - 2 \Lambda\right) \sqrt{g} - \left(\norm{\pi}{g}^{2} - \tfrac{1}{n-1} ( {\tr}_{g}\pi)^{2}\right) \slash \sqrt{g},\\
\phiigpi &:=&g_{ij} \n_{k} \pi^{jk},
\end{eqnarray*}
where $R(g)$, $\sqrt{g}$, $\nabla$,  are respectively   the scalar curvature, the volume form, and the connexion  of $g$,  and $\Lambda$ is the cosmological constant.
We start with the scalar curvature operator.
\begin{thm}{}\label{mainthmmanifoldintroscal}
Let $\m,$ be a smooth n-dimensional compact manifold with $n\geq 3$.  Let $k\in\mathbb N$ such that $k+2>n/2$.
Let $R: H_+^{k+2}\rightarrow  H^k$ be the scalar curvature operator.
Then for every $\epsilon \in H^k$ , the  non-static fiber  of the scalar curvature map:
$$\mathcal S_0(\epsilon) := \lbrace g \in  H_+^{k+2} : \;\ker DR(g)^* =\{0\},\; R(g)= \epsilon \rbrace$$
is a submanifold of $H^{k+2}$. In particular,  the space of non static solutions of the constant scalar curvature equation  has a Hilbert submanifold structure.
\end{thm}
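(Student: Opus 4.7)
The plan is to apply the submersion theorem for Hilbert manifolds: if at every $g \in \mathcal S_0(\epsilon)$ the differential $DR(g): H^{k+2} \to H^k$ is a continuous surjection, then the level set $R^{-1}(\epsilon)$ is locally a Hilbert submanifold near $g$, because kernels of continuous surjections between Hilbert spaces automatically split. Intersecting with the locus where $\ker DR(g)^* = \{0\}$, which will be open in $g$ by stability of the elliptic estimate, then endows $\mathcal S_0(\epsilon)$ with a submanifold structure.

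First I would verify that $R: H_+^{k+2} \to H^k$ is a smooth map. The hypothesis $k+2 > n/2$ makes $H^{k+2}$ a Banach algebra by Sobolev embedding; since $R(g)$ is a rational expression in $g^{-1}$ and the derivatives $\partial g$, $\partial^2 g$, with derivatives entering at most quadratically, a multiplier estimate places $R(g)$ in $H^k$ with smooth dependence on $g$. I would then write the linearization in its classical form
\begin{equation*}
DR(g) \cdot h = -\Delta_g (\tr_g h) + \div_g \div_g h - \langle \Ric(g), h \rangle_g,
\end{equation*}
and integrate by parts to obtain the formal $L^2$-adjoint acting on scalar functions,
\begin{equation*}
DR(g)^* N = -(\Delta_g N)\, g + \nabla^2 N - N \Ric(g),
\end{equation*}
which is the classical static KID operator appearing in the hypothesis.

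The key analytic step is that $DR(g)^*$ is overdetermined elliptic: its principal symbol sends $N$ to $N(\xi \otimes \xi - |\xi|_g^2 g)$, which vanishes only when $N = 0$. This yields the elliptic estimate
\begin{equation*}
\|N\|_{H^{k+2}} \leq C \left( \|DR(g)^* N\|_{H^k} + \|N\|_{L^2} \right),
\end{equation*}
so $DR(g)^*: H^{k+2} \to H^k$ has closed range. By duality (in Hilbert spaces the image of a closed-range operator coincides with the $L^2$-orthogonal complement of the kernel of its adjoint), $DR(g): H^{k+2} \to H^k$ also has closed range, with image $(\ker DR(g)^*)^{\perp}$. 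The assumption $\ker DR(g)^* = \{0\}$ then forces surjectivity of $DR(g)$, and the submersion theorem applies.

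The chief obstacle is the low regularity of the ambient metric: $\Ric(g)$ lies only in $H^k$, so the elliptic estimate and the regularity of distributional solutions of $DR(g)^* N = 0$ require careful justification in the rough-coefficient setting. One must ensure that the standard elliptic machinery (coefficient freezing together with Banach-algebra multiplier estimates tailored to $k+2 > n/2$) produces the needed $H^{k+2}$ control with constants depending continuously on $\|g\|_{H^{k+2}}$, and that every distributional cokernel element is in fact a genuine static KID. Once these regularity issues are resolved, openness of the non-static condition follows from continuous dependence of the injectivity estimate on $g$, and the Hilbert-manifold submersion theorem delivers the stated structure.
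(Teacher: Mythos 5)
Your overall strategy --- cast the problem as a submersion, and obtain surjectivity of $DR(g)$ from injectivity of its formal $L^2$-adjoint via overdetermined ellipticity --- is in the same spirit as the paper's, and the linearization formulas, the overdetermined ellipticity of $DR(g)^*$, and the elliptic estimate you state are all correct. The genuine gap is the step ``by duality, $DR(g)$ also has closed range.'' The operator $DR(g)^*: H^{k+2}\to H^k$ is the \emph{formal} $L^2$-adjoint of $DR(g): H^{k+2}\to H^k$; it is not the Banach (or Hilbert-space) adjoint. The closed range theorem relates $DR(g)$ to its Banach transpose $(DR(g))^{t}: H^{-k}\to H^{-k-2}$, a map between different (negative-order) spaces, and closed range of the formal adjoint acting $H^{k+2}\to H^k$ does not transfer to it. To make your route work you would need a semi-Fredholm estimate for the transpose in negative-order spaces, in addition to the regularity statement (that distributional kernel elements in $H^{-k}$ actually lie in $H^{k+2}$) which you flag but do not prove. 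The classical Fischer--Marsden alternative --- showing $DR(g)\circ DR(g)^*$ is a fourth-order elliptic operator of index zero --- also fails directly at this regularity: with $\Ric(g)$ only in $H^k$, the image $DR(g)^*v$ for $v\in H^{k+4}$ lies no better than in $H^k$, so the composition loses more than four derivatives and standard Fredholm theory for a fourth-order operator does not immediately apply.

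The paper circumvents this with a different closed-range argument. It densitizes to $\phi(g)=(R(g)-2f)\sqrt{g}$ (Appendix A) and then, in the constraint-map framework of Section \ref{SectionStructure}, feeds in the special conformal variation $h=2y\,g$ to produce a second-order scalar operator $F_0(y)\sim -2(n-1)\sqrt{g}\,(\Delta y+\kappa n y)$. Semi-Fredholmness of this Laplacian with $H^k$ potential comes from the rough-coefficient elliptic theory of Appendix C and is upgraded to Fredholmness by approximation by smooth coefficients; since $\mathrm{Im}\,F\subset \mathrm{Im}\,D\phi$, the latter is then closed. Density of the range requires triviality of the \emph{weak} kernel, which in turn relies on the regularity Proposition \ref{propRegul}, another ingredient you would need to supply. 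Your outline hits the right landmarks, but the closed-range step and the weak-kernel regularity step are exactly where the real work of the paper lives.
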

It is easy to prove that if  $\epsilon$ is negative, then any solution is non static, but  in more general situation the non staticity may be proved (see \cite{BEM1976, Bourguignon1975, FM1975} for smooth metrics).\\

For the constraint map, we
work with a phase space $\F$ consisting of pairs $(g,\pi)$ of $H^{k+2}\times H^{k+1}$ regularity, 

We now state  a formal version of our main result (see Theorem \ref{mainthmmanifold} for a precise statement).
\begin{thm}{}\label{mainthmmanifoldintro} Let $n\geq 3$ and let $k\in\mathbb N$ such that $k+2>n/2$.
Let $\m$ be a smooth n-dimensional compact manifold.
Let $\phibf : \F \rightarrow \L^{*}$ be the constraint operator associated to the cosmological constant $\Lambda$.
For every $\epsilon \in \L^{*}$  , the no KID's  fiber  of the constraint map 
$$\contrainte_0 (\epsilon) := \lbrace (g,\pi) \in \F : \;\ker D\phigpi^* =\{0\},\; \phigpi = \epsilon \rbrace$$
is a submanifold of $\F$. In particular,  the space of solutions of the vacuum constraint equations without KID's  $\contrainte = \contrainte_0 (0)$ has a Hilbert submanifold structure.
\end{thm}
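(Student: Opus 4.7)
The plan is to apply the Hilbert implicit function theorem to $\phibf : \F \to \L^{*}$ at each $(g,\pi) \in \contrainte_0(\epsilon)$. Since $\F$ and $\L^{*}$ are Hilbert spaces and $k+2>n/2$ places us above the Sobolev embedding threshold, the constraint operator---polynomial in $(g,\pi)$ and their first derivatives, divided by $\sqrt{g}$---defines a smooth map $\phibf : \F \to \L^{*}$; verifying this requires some care with product estimates in $H^{s}$ near the sharp threshold (in particular to handle the $H^{k+1}\cdot H^{k+1}$ terms arising from $\pi\otimes\pi$). All that remains is to show that the derivative $P := D\phigpi : \F \to \L^{*}$ is surjective at points of $\contrainte_0(\epsilon)$, since kernels of bounded operators between Hilbert spaces automatically split.

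Surjectivity I would obtain by constructing the right inverse $P^{*}(PP^{*})^{-1}$. The formal $L^{2}$ adjoint $P^{*}=\phigpis$ is a second-order differential operator acting on pairs $(N,Y)$ (lapse and shift), and a standard calculation identifies $\ker P^{*}$ with the space of KIDs at $(g,\pi)$; by hypothesis this kernel is trivial on $\contrainte_0(\epsilon)$. The composition $PP^{*}$ is then a formally self-adjoint operator which, in the appropriate block-weighted Hilbert scale adapted to the underdetermined-elliptic structure of $P$, is elliptic. On the compact manifold $\m$ it is therefore Fredholm of index zero; since $\ker PP^{*}=\ker P^{*}=\{0\}$, it is an isomorphism from a suitable domain onto $\L^{*}$. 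Composing $P^{*}$ with $(PP^{*})^{-1}$ yields a bounded right inverse of $P$ landing in $\F$, so $P$ is onto.

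Applying the implicit function theorem then gives that $\phibf^{-1}(\epsilon)$ is a Hilbert submanifold of $\F$ near $(g,\pi)$. Finally, non-KID-ness is an open condition on $\F$ (the coercivity constant of $P^{*}$, controlling $\ker\phigpis$, depends continuously on $(g,\pi)$ and is assumed strictly positive), so $\contrainte_0(\epsilon)$ is an open subset of this local submanifold, hence itself a Hilbert submanifold. Taking $\epsilon=0$ yields the Hilbert manifold structure on the space of vacuum solutions without KIDs.

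The principal obstacle is the isomorphism claim for $PP^{*}$ at the weakest possible regularity. With coefficients of $P$ only in $H^{k+2}\times H^{k+1}$, classical elliptic theory has to be redone essentially from scratch: product, composition, and commutator estimates in $H^{s}$ for low $s$ must be tracked carefully, the a priori elliptic estimate must be balanced against the different derivative losses in the scalar and momentum blocks of $P$ and $P^{*}$ (4th order in $N$, 2nd order in $Y$, plus mixed terms), and a KID-type coercivity inequality of the form $\Norm{(N,Y)}{\text{low}} \lesssim \Norm{P^{*}(N,Y)}{L^{2}}$ must be established in order to get a strictly positive first eigenvalue. This is where the auxiliary second-order operators and the KID inequalities announced in the abstract should enter.
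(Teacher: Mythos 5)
Your strategy follows the classical Fischer--Marsden right-inverse construction $P^{*}(PP^{*})^{-1}$, whereas the paper follows Bartnik's variation-restriction technique, and this is not merely a stylistic difference: the $PP^{*}$ route has a gap exactly where you flag it, and the paper's argument is designed to avoid that gap rather than fill it.

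Concretely, with $(g,\pi)\in H^{k+2}\times H^{k+1}$ and $k+2>n/2$ at the sharp threshold, the formal adjoint $D\phigpis$ already carries a mixed Douglis--Nirenberg order (second order in $N$, first order in $X$, a fact the paper handles by redefining a weighted adjoint $P^{*}$ with an extra derivative inserted in the shift block, eq.~(\ref{Ps})). The composition $PP^{*}$ is then of mixed order up to four, with coefficients formed from products and derivatives of $g^{-1}$, $\nz g$, $\pi$, $Riem\,\gz$; at this regularity these coefficients land only in $H^{k}$ or worse, and there is no off-the-shelf Fredholm or index-zero theory for such operators. Simply asserting that $PP^{*}$ ``is therefore Fredholm of index zero'' does not go through, and you do not supply the elliptic estimate or the approximation argument that would be needed. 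You also need to identify the weak kernel of $D\phigpis$ computed in the dual space $H^{-k}$ with the strong kernel in $H^{k+2}$ before ``$\ker PP^{*}=\ker P^{*}=\{0\}$'' can be used: this is a genuine regularity bootstrap (the paper's Proposition~\ref{propRegul}), absent from your argument.

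The paper circumvents $PP^{*}$ entirely. Surjectivity of $D\phigpi$ is split into (i) \emph{density} of the image, which follows directly from the no-KID hypothesis together with the regularity result, and (ii) \emph{closedness} of the image, which is obtained not by inverting $PP^{*}$ but by restricting $D\phigpi$ to a specific family of variations $(h,p)$ parametrized by $(y,Y)$ via~(\ref{defhp}). The resulting operator $F(y,Y)=D\phigpi(h,p)$ is a genuine second-order operator whose principal part at $(\gz,\piz)$ decouples into the scalar operator $\A=-\Delta-\kk n$ and the $1$-form operator $B=-\Delta-\kk(n-1)$; the elliptic estimate, Fredholm property, and approximation by smooth coefficients are all carried out for $F$ (Theorems~\ref{thcB},~\ref{thc} and Corollary~\ref{corFredholm}). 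Since $\operatorname{Im}F\subset\operatorname{Im}D\phigpi$ and $\operatorname{Coker}F$ is finite-dimensional, $\operatorname{Im}D\phigpi$ is closed, and density then gives surjectivity. This is both lower-order and more robust at critical regularity than the $PP^{*}$ route; to complete your proposal you would essentially have to rebuild the elliptic theory for a fourth-order mixed-order operator with $H^{k}$ coefficients, which is precisely what the Bartnik decomposition lets one avoid.
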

%The tensor field  $\mathring \pi$ together with the constant $\Lambda$ are chosen so that $(\gz,\mathring \pi) $ satisfies  the vacuum constraint equations when $R(\gz)=\kappa n(n-1)$.

The low regularity of the metrics involved (curvature may be unbounded), and the non linear characteristic of the constraint operator, forces us to a very precise analysis of the different
 steps of the proof. All the usual instant thoughts with more regularity have to be reconsidered, including for instance, boundedness of operators, elliptic estimates or Fredholm properties (the operators also have  weak regular  coefficients here).

The kernel of the adjoint operator can be computed in a $H^k$
 space and not  in the all dual space $H^{-k}$, this regularity result take an important  part of the paper and has his own  interest.

%\erw{dire un mot sur la stratification par la dim du noyau de l'adjoint}

The paper is written in the spirit of an easy adaptation to the non compact setting, where a reference metric $\gz$ with an asymptotic constant curvature on each end is usually used.
 For this future applications, we recall the definitions of a Hessian-type operator $\Tz$ and a differential operator of order two, called $\Uz$, Combined from the first  derivatives of the Killing operator $\Sz$.  As in \cite{DF2016}  they can be useful for non compact setting, once  obtained new Poincar\'e and Korn-type estimates of second order on the ends. These estimates   are very  important to prove there is no KID's (with appropriate behaviour).\\

\noindent
{\bf Acknowledgement :}  I wish to thank Piotr Chru\'sciel for his always valuable comments. I am grateful to Olaf M\"uller for a constructive exchange   about \cite{Muller2017} and  \cite{Muller2017erratum}.
The author was supported by the grant ANR-17-CE40-0034 of the French National Research Agency ANR (project CCEM).
\section{Notations and conventions}
%\addcontentsline{toc}{chapter}{Notations and conventions}
%$\d$ désigne l'opérateur de dérivation partielle usuelle.\\
Let $(\m,g)$ be a Riemannian manifold. We define  $T^{r}_{m}(\m)$
to be the  bundle of  tensor  covariant of  rank $m$ and contravariant of  rank $r$ .
For all $u \in T^{r}_{m}(\m)$, $\norm{u}{g}$ will denote the norm of $u$ with respect to the metric $g$ . $d \mu (g)$ is the Riemannian measure determined by $g$. $\Riem \, g , \Ric \, g$ and $R(g)$ are respectively the Riemann tensor, the Ricci tensor and the scalar curvature of the metric $g$. For a Riemannian metric $g$ with connection $\n$, we set the following notations concerning the Hessian and Laplacian of a function $u$:
\begin{eqnarray*}
\n^{2}_{ij}u &=& \n_{i}\n_{j}u,\\
\Delta u &=&  {\tr}_{g}\n^{2}u = g^{ij} \n^{2}_{ij}u.
\end{eqnarray*}

We  fix a smooth Riemannian metric $\gz$. When working on non compact manifold such as asymptotically Euclidian or asymptotically hyperbolic, it is convenient to choose for $\gz$ a special metric, for instance having Ricci curvature asymptotic to a constant at infinity like the model spaces.

We  define the following norms for the usual Lebesgue  spaces $L^p$ of functions or tensor fields.
\begin{eqnarray*}
\forall \; 1\infeg p < \infty \;\; \ \; \; \Norm{u}{p} = \left( \int_{\m} \norm{u}{\gz}^{p} \;  d\mu (\gz) \right)^{1 \slash p}\\
\text{For} \; p= \infty \; \; \; \; \Norm{u}{\infty} = \underset{\m}{sup} \left(  \norm{u}{\gz} \right)
\end{eqnarray*}

The norm on the Sobolev space $\sob{k}{p}{}$ is defined as 

$$\Norm{u}{k,p} = \sum_{|\alpha| \infeg k} \Norm{\nz^{\alpha}u}{p}$$
where $\alpha$ is a  multi-index of size $n$ and  $\nz^{\alpha}u = \nz_{i_{1}}^{\alpha_{1}} \ldots \nz_{i_{n}}^{\alpha_{n}}u$
$$\alpha = (\alpha_{1}, \ldots , \alpha_{n}) \; \; \text{ and } \; \; |\alpha| = \sum_{i = 1}^{n} \alpha_{i}$$

If needed, we specify  $\sob{k}{p}{}(T^{r}_{m} \m)$  for the corresponding Sobolev space on tensors of type $(r,m)$ on $\m$. 

For the special case where $p=2$ we could use the notation $H^k$ for $\sob{k}{2}{}$. In that context we also need
to define  $H^{k}$ for  negative integers $k$.  Let $k\in\mathbb N$, for any $v\in L^2(M,T)$, we define the linear continuous map $L_v: H^k(M,T^*\otimes\Lambda^n\m)\rightarrow  \mathbb R$ by
$$
L_v(u)=\langle v,u\rangle=\int_{\m} v\,u.
$$
$H^{-k}$ is the completion  of $L^2$ for the norm
$$
||v||_{-k}=||L_v||=\sup_{u\in H^k}\frac{\langle v,u\rangle}{||u||_{k}}.
$$
$H^{-k}$ is the dual of $H^k$.  The $L^2$-product $L_v(u)=\langle v,u\rangle$  defined for $v\in L^2$ and $u\in H^k$ can be extended  to $v\in H^{-k} $ by
$$
\langle v,u\rangle=\lim_{n\rightarrow \infty}\langle v_n,u\rangle,
$$
where $v_n\in L^2$ tends to $v$ in  $H^{-k}$.

By the Riesz representation theorem, for $v\in H^{-k}$  there exist a unique $w\in H^k$ such that
$v(u)=\langle w,u\rangle_{H^k}$. So $H^{-k}$ correspond to distributions  on $\m$ who can be written in the form
$$
v=\sum_{0\leq l\leq k\;, \;\;0\leq i_1,..., i_l\leq n} (-1)^l\nz^{i_1}....\nz^{i_l}\nz_{i_1}....\nz_{i_l}w
$$
for some  $w$  in $H^k$, or the more usual form
$$
v=\sum_{0\leq l\leq k\;, \;\;0\leq i_1,..., i_l\leq n} \nz^{i_1}....\nz^{i_l} \;^l W_{i_1,...i_l},
$$
where the $^l W$'s are covariant tensors  of rank $l$ in $L^2$. Similarly  we may define Sobolev spaces  of negative order   $W^{-k,p}$ and more generally $W^{s,p}$ for any $s\in\R$ (see definitions 2 and 3 in \cite{HNT2009}).

We also recall the usual H\"older spaces , $C^{s, \alpha} (\m,g)$ with  $0< \alpha<1$ and the following  norm
$$\Norm{u}{C^{s, \alpha}} = \underset{|k| \infeg s}{\text{max}} \Norm{\nz^{k}u}{C^{0, \alpha}}$$
with $$\Norm{u}{C^{0, \alpha}} = \underset{x \in \m}{\text{sup}} \norm{u}{\gz}  + \underset{x \in \m}{\text{sup}} \left( \underset{d_{\gz}(x,y) \infeg 1}{\text{sup}} \frac{\norm{\w{u}(x) - \w{u}(y)}{\w{g}}}{d_{\gz}(x,y)^{\alpha}} \right)$$
where $\w{u}$ et $\w{g}$ correspond to the tensors  $u$ and $g$ in an orthonormal basis.

\section{Analysis tools}

We first describe  a listing of  some more or less classic inequalities (see  eg. \cite{Andersson1993, CB2005,  HNT2009, BH2017b}).\\

\noindent
{\bf  H\"older inequalities :}
\begin{list}{$\bullet$}{}
\item
Let $p,q,r \in \nat$ be such that
$$ 1\infeg p\infeg q \infeg r \infeg \infty \;\; \text{and} \;\; \frac{1}{p} = \frac{1}{q} + \frac{1}{r} \, ,$$ then
\begin{equation}\label{h1}
\Norm{uv}{p } \infeg \Norm{u}{q } \; \Norm{v}{r }.
\end{equation}
\item Set  $\lambda \in \lbrack 0,1 \rbrack$ and  let $p,q,r \in \nat$ be such that
$$ 1\infeg p\infeg q \infeg r \infeg \infty \;\;  { {\text{and}}} \;\; \frac{1}{p} = \frac{\lambda}{q} + \frac{1- \lambda}{r} \, ,$$ then
\begin{equation}\label{h2}
\Norm{u}{p } \infeg \Norm{u}{q }^{\lambda} \; \Norm{u}{r }^{1- \lambda}.
\end{equation}
\end{list}
{\bf Sobolev embedding:}\\
For all $ 1\infeg p\infeg q< \infty $ , for all $k\supeg k'$, \; we have $\sob{k}{q}{} \subset \sob{k'}{p}{}$ ,\\
 {and} there exists a positive constant $c = c \, (\gz   ' , k, k',n, p,q)$ such that
$$\Norm{u}{k', p } \infeg c \, \Norm{u}{k, q }.$$
If $ q= \infty$, for all $ 1\infeg p\infeg \infty $; for all $ k \supeg k'$ , \; we get  $\sob{k}{\infty}{} \subset \sob{k'}{p}{}$ ,\\
 {and} there exists a positive constant $c$ such that
$$\Norm{u}{k', p } \infeg c \, \Norm{u}{k, \infty }.$$
{\bf H\"older embedding, Morrey's inequality:}\\
For all $ 0 < l+\alpha \infeg k+2-n/2$ , there exists a positive constant $c $ such that
\begin{equation}\label{incholdn}
\forall u \in \sob{k+2}{ 2}{}(\m) \; \; , \; \;\Norm{u}{C^{l, \alpha}_{}} \infeg c \, \Norm{u}{k+2, 2 }.
\end{equation}
{\bf Sobolev inequalities:}\\
Set $1\infeg p < \infty$ and let $k,j$ be integers.  { In each of the following cases, there exists a positive constant $c = c \,(\gz  , k,n,j,p,q)$ such that
for all $u \in \sob{j+k}{p}{}(\m)$,}
\begin{list}{$\bullet$}{}
\item If $pk<n, \; \Norm{u}{j, q } \infeg c \, \Norm{u}{j+k, p } \; , \; \; \forall \, p\infeg q \infeg \frac{np}{n-kp}$.\\
\item If $pk=n, \; \Norm{u}{j, q } \infeg c \, \Norm{u}{j+k, p } \; , \; \; \forall \, p\infeg q < \infty$.\\
\item If $pk>n, \; \Norm{u}{j, q } \infeg c \, \Norm{u}{j+k, p } \; , \; \; \forall \, p\infeg q \infeg \infty$.
\end{list}
\medskip
{\bf Ehrling inequality:}\\
Let  $j$ and $k$ be two integers such that $0 <j<k$. For all $\epsilon >0$, there exists a positive constant $C(\epsilon)$ such that
\begin{equation}\label{Ehrling}
\forall u \in \sob{k}{p}{} \; \; , \; \; \Norm{u}{j, p } \infeg \epsilon  \Norm{u}{k, p } + C(\epsilon)  \Norm{u}{p }.
\end{equation}
{\bf Rellich - Kondrachov Theorem:}\\
For all $k>k'$ , the inclusion $\sob{k}{2}{} \subset \sob{k'}{2}{}$ is compact.\\

{We will also need to control the norm of some products} (see eg. \cite{CB2005,  HNT2009, BH2017b} ).
\begin{lem}\label{lemmePiotr}
 Let $0 \leq m \leq l \leq k$, $q, p \in [1,+\infty]$, $kp > n$. Suppose that $(l, q)$ is
such that the Sobolev embedding $W^{k,p}\subset W^{l,q}$
holds. Then the product map
$$
W^{k-m,p}\times W^{l,q}\ni (f, g) \mapsto fg \in W^{l-m,q}
$$
is continuous. 
\end{lem}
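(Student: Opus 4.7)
The natural plan is to reduce to a collection of bilinear $L^q$ estimates via Leibniz and then dispatch each by Hölder combined with the Sobolev embeddings recalled above. Concretely, for a multi-index $\gamma$ with $|\gamma|\leq l-m$ one writes
\begin{equation*}
\nz^{\gamma}(fg)=\sum_{\alpha+\beta=\gamma}c_{\alpha\beta}\,\nz^{\alpha}f\cdot\nz^{\beta}g,
\end{equation*}
and the problem becomes: for every $(\alpha,\beta)$ with $s:=|\alpha|+|\beta|\leq l-m$, show that the bilinear map $(f,g)\mapsto \nz^{\alpha}f\cdot\nz^{\beta}g$ is continuous from $W^{k-m,p}\times W^{l,q}$ to $L^{q}$. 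Note that $\nz^{\alpha}f$ inherits regularity $W^{k-m-|\alpha|,p}$ and $\nz^{\beta}g$ inherits regularity $W^{l-|\beta|,q}$, with both indices non-negative since $|\alpha|\leq l-m\leq k-m$ and $|\beta|\leq l-m\leq l$.

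The key dichotomy I would use rests on the hypothesis $kp>n$. First case: if $(k-m-|\alpha|)p>n$, then $\nz^{\alpha}f\in L^{\infty}$ by Morrey/Sobolev, and since $\nz^{\beta}g\in W^{l-|\beta|,q}\subset L^{q}$ (because $l-|\beta|\geq 0$), Hölder with exponents $(\infty,q)$ yields the desired bound
\begin{equation*}
\Norm{\nz^{\alpha}f\cdot\nz^{\beta}g}{q}\leq \Norm{\nz^{\alpha}f}{\infty}\Norm{\nz^{\beta}g}{q}\leq c\,\Norm{f}{k-m,p}\Norm{g}{l,q}.
\end{equation*}
Second case: if $(k-m-|\alpha|)p\leq n$, then $|\alpha|$ is large, hence $|\beta|=s-|\alpha|$ is small, and by $kp>n$ together with $|\beta|\leq l-m-|\alpha|<l-(k-n/p)$ one checks that there is room to apply Sobolev to $\nz^{\beta}g$ gaining integrability. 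Specifically, pick $p_{1}\in[p,\infty)$ such that $W^{k-m-|\alpha|,p}\subset L^{p_{1}}$ (taking the critical exponent $p_{1}=np/(n-(k-m-|\alpha|)p)$ when $(k-m-|\alpha|)p<n$, and any finite $p_{1}$ when equality holds), and let $p_{2}$ be defined by $1/q=1/p_{1}+1/p_{2}$; one then verifies that the residual regularity $l-|\beta|$ of $\nz^{\beta}g$ suffices, in view of $W^{k,p}\subset W^{l,q}$, to guarantee $\nz^{\beta}g\in L^{p_{2}}$ by the Sobolev inequalities.

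I would then apply Hölder \eqref{h1} with exponents $(p_{1},p_{2})$ and combine with the two Sobolev embeddings to obtain
\begin{equation*}
\Norm{\nz^{\alpha}f\cdot\nz^{\beta}g}{q}\leq c\,\Norm{f}{k-m,p}\Norm{g}{l,q}.
\end{equation*}
Summing over $\alpha+\beta=\gamma$ and over $|\gamma|\leq l-m$ gives continuity of the product map. The main obstacle is the bookkeeping of exponents in the second case: one must check that the split $1/q=1/p_{1}+1/p_{2}$ is compatible with the available Sobolev embeddings for \emph{every} admissible $(\alpha,\beta)$, including the borderline sub-cases $(k-m-|\alpha|)p=n$ (where one uses the full range $p_{1}<\infty$, matched by losing a harmless amount in the integrability of $\nz^{\beta}g$) and the extremes $|\alpha|=0$ or $|\beta|=0$. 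Once this combinatorial check is carried out, the continuity bound is a direct consequence of the triangle inequality and \eqref{h1}.
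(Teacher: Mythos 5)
The paper does not prove this lemma; it cites it from \cite{CB2005, HNT2009, BH2017b} and moves on, so there is no ``paper's proof'' against which to compare. That said, your Leibniz--H\"older--Sobolev strategy is exactly the standard one those references use, and the overall structure is sound: reduce to bounding $\Norm{\nz^{\alpha}f\cdot\nz^{\beta}g}{q}$ for $|\alpha|+|\beta|\leq l-m$, then split according to whether $\nz^{\alpha}f$ already lands in $L^{\infty}$.

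The one thing you defer --- ``once this combinatorial check is carried out'' --- is the actual content of the lemma, so you should close it. The clean way is to observe that for every decomposition $\alpha+\beta$ with $|\alpha|+|\beta|\leq l-m$ one has
$$
(k-m-|\alpha|)+(l-|\beta|)=k+l-m-(|\alpha|+|\beta|)\;\geq\; k,
$$
hence
$$
\Big(\tfrac1p-\tfrac{k-m-|\alpha|}{n}\Big)+\Big(\tfrac1q-\tfrac{l-|\beta|}{n}\Big)\;\leq\;\tfrac1p+\tfrac1q-\tfrac{k}{n}\;<\;\tfrac1q,
$$
the last strict inequality being exactly $kp>n$. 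This is the budget that lets you pick $p_{1},p_{2}\in[q,\infty]$ with $1/p_{1}+1/p_{2}=1/q$, $W^{k-m-|\alpha|,p}\hookrightarrow L^{p_{1}}$ and $W^{l-|\beta|,q}\hookrightarrow L^{p_{2}}$, with strict slack so that critical-exponent endpoints never have to be hit on both factors at once. (When one of the two parenthesized quantities is nonpositive, the corresponding factor is in $L^{\infty}$ --- or in every finite $L^{r}$ in the borderline case --- and the same inequality still leaves room for the other exponent; the borderline sub-cases you flag, e.g.\ $|\beta|=0$ together with $(l-|\beta|)q=n$ and a critical embedding $W^{k,p}\subset W^{l,q}$, are in fact excluded because they force $kp=n$.) With that observation inserted, the proof is complete; also note that the needed regularities $k-m-|\alpha|\geq 0$ and $l-|\beta|\geq 0$ follow from $|\alpha|\leq l-m\leq k-m$ and $|\beta|\leq l-m\leq l$, which you do state.
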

\begin{remark} \em
For our applications we usually use that when  $k+2>n/2$,
\begin{equation}\label{19gk}
\Norm{uv}{k+2 ,2}+\Norm{u\mathring\nabla v}{k+1,2 } 
+\Norm{u\mathring\nabla\mathring\nabla v}{k,2 }+\Norm{\mathring\nabla u\mathring\nabla v}{k,2 }\infeg c \; \Norm{u}{k+2,2} \; \Norm{v}{k+2,2 } \; \; .
\end{equation}
\end{remark}
The following estimates  will also be useful.
\begin{lem}\label{Ehrlingadapte} Let $p\in [2,+\infty)$ such that  $
p< \frac{2n}{n-4}$ if $n>4$. Let $q\in[2,\frac{2n}{n-2})$.
For all $k\in\mathbb N$ and any $\epsilon>0$, there exists a positive constant $c = c \, (\epsilon)$ such that
\begin{eqnarray}
\forall u \in \sob{k+2}{2}{}(\m) \; \; , \; \; \Norm{u}{k,p } &\infeg& \epsilon \; \Norm{u}{k+2,2} +c (\epsilon)\Norm{u}{0,2 }. \label{uinfdk}\\
\forall u \in \sob{k+1}{2}{}(\m) \; \; , \; \; \Norm{u}{k,q } &\infeg& \epsilon \; \Norm{u}{k+1,2} + c (\epsilon)\Norm{u}{0,2 }. \label{u3dk}
\end{eqnarray}
\end{lem}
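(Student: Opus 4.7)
The plan is to derive both estimates~\eqref{uinfdk} and~\eqref{u3dk} from the abstract Ehrling lemma, applied to an appropriate triple of function spaces. First, I would observe that the embeddings
\[
\sob{k+2}{2}{}(\m) \hookrightarrow \sob{k}{p}{}(\m) \quad\text{and}\quad \sob{k+1}{2}{}(\m) \hookrightarrow \sob{k}{q}{}(\m)
\]
are continuous by the Sobolev inequality recalled above, with critical exponents $p^{*}=\tfrac{2n}{n-4}$ when $n>4$ and $q^{*}=\tfrac{2n}{n-2}$ respectively (no upper bound when $n\leq 4$ for the first embedding). The hypotheses impose the strict inequalities $p<p^{*}$ and $q<q^{*}$; consequently, on a compact manifold the embeddings are in fact compact, by the general Rellich--Kondrachov theorem, which asserts that $\sob{k_{1}}{p_{1}}{}\hookrightarrow\hookrightarrow \sob{k_{2}}{p_{2}}{}$ as soon as $k_{1}>k_{2}$ and $k_{1}-n/p_{1}>k_{2}-n/p_{2}$.

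Next, combining either compact embedding with the trivial continuous inclusion $\sob{k}{p}{}\hookrightarrow L^{2}$ (valid on the compact manifold since $p\geq 2$), I would apply the Ehrling lemma: whenever $X\hookrightarrow\hookrightarrow Y \hookrightarrow Z$, one has, for every $\epsilon>0$,
\[
\Norm{u}{Y}\infeg \epsilon\,\Norm{u}{X}+C(\epsilon)\Norm{u}{Z}.
\]
The abstract proof is the usual contradiction argument: if the inequality failed for some $\epsilon_{0}>0$, one could find a sequence $(u_{n})$ with $\Norm{u_{n}}{Y}=1$ and $1>\epsilon_{0}\Norm{u_{n}}{X}+n\Norm{u_{n}}{Z}$; boundedness in $X$ together with $X\hookrightarrow\hookrightarrow Y$ would provide a subsequence converging in $Y$ (and hence in $Z$) to a limit $u$ with $\Norm{u}{Y}=1$, while $\Norm{u_{n}}{Z}\to 0$ would force $u=0$, a contradiction. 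Applying this with $(X,Y,Z)=(\sob{k+2}{2}{},\sob{k}{p}{},L^{2})$ produces~\eqref{uinfdk}, and with $(X,Y,Z)=(\sob{k+1}{2}{},\sob{k}{q}{},L^{2})$ produces~\eqref{u3dk}.

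The substantive obstacle lies in the compactness assertion of the first step, since the source and target spaces in each embedding differ both in order of differentiation and in integrability exponent; this cannot be read off directly from the paper's Rellich--Kondrachov statement (formulated for $\sob{k}{2}{}$ spaces only) nor from Sobolev alone. Either one invokes the full version of Rellich--Kondrachov in the mixed $\sob{s}{p}{}$ scale, or, more elementarily, one proves it by localising in a finite cover by coordinate charts, invoking the Euclidean compactness statement there, and using that the strict sub-criticality $p<p^{*}$ (resp.\ $q<q^{*}$) is preserved by the covering. Once this compactness is secured, the rest of the argument is entirely formal.
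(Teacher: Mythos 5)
Your argument is correct, but it takes a genuinely different route from the paper's.

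The paper proceeds by a quantitative interpolation: pick $r>p$ in the Sobolev range for $W^{k+2,2}\hookrightarrow W^{k,r}$, use the H\"older interpolation (\ref{h2}) to write $\Norm{u}{k,p}\leq C\Norm{u}{k,2}^\lambda\Norm{u}{k+2,2}^{1-\lambda}$, then split the product via $a^\lambda b^{1-\lambda}\leq a+b$ with the weights arranged so that the $\Norm{u}{k+2,2}$ factor picks up the small parameter $\epsilon$, and finally control the remaining $\Norm{u}{k,2}$ term by the \emph{explicit} Ehrling inequality (\ref{Ehrling}) (which is stated for a single fixed exponent). The whole proof stays inside the toolbox the paper has already listed. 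Your proof instead invokes compactness of the mixed embedding $W^{k+2,2}\hookrightarrow\hookrightarrow W^{k,p}$ (resp.\ $W^{k+1,2}\hookrightarrow\hookrightarrow W^{k,q}$) under strict sub-criticality, plus the continuous inclusion $W^{k,p}\hookrightarrow L^2$ for $p\geq 2$ on a compact manifold, and then runs the standard Ehrling contradiction argument for a compact embedding followed by a continuous injective one. Both are valid and yield the same conclusion. The paper's route avoids appealing to compactness of Sobolev embeddings across different integrability exponents — it only ever uses the Rellich--Kondrachov statement for the $W^{k,2}$ scale, consistent with what it recalls in the Analysis tools section — whereas your route is shorter and more conceptual but does require the mixed-exponent Rellich--Kondrachov theorem (or a localization argument), a point you correctly flag as the substantive one. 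You have also correctly identified the critical exponents $\tfrac{2n}{n-4}$ and $\tfrac{2n}{n-2}$ and why strict inequality buys compactness. No gap; this is a clean alternative proof.
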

\begin{proof}
Let $r>p$ such that $r\leq \frac{2n}{n-4}$ if $n>4$. There exist $\lambda\in(0,1]$ such that
$$
\frac1p=\frac{\lambda}{2}+\frac{1-\lambda}r.
$$
By the Sobolev inequality (\ref{h2}) used for $u$ and its derivatives up to order $k$,
we find 
$$
\Norm{u}{k,p }\leq C_k \Norm{u}{k,2 }^\lambda \Norm{u}{k,r }^{1-\lambda},
$$
thus
$$
\Norm{u}{k,p }\leq C \Norm{u}{k,2 }^\lambda \Norm{u}{k+2,2 }^{1-\lambda},
$$
We can now use 
$$
a^\lambda b^{1-\lambda}\leq \lambda^\lambda (a+b)\leq (a+b),
$$
with $b=\epsilon  \Norm{u}{k+2,2 }$ and  $a=C^{1/\lambda}\epsilon^{-\frac{1-\lambda}{\lambda}}\Norm{u}{k,2 }$.
We obtain the desirate estimate by applying the Ehrling inequality  (\ref{Ehrling}) in order to control $a$.

Let $s>q$ such that $s\leq  \frac{2n}{n-2}$ then there exist $\lambda\in(0,1]$ such that
$$
\frac1q=\frac{\lambda}{2}+\frac{1-\lambda}s.
$$
As before, we have
$$
\Norm{u}{k,q }\leq C_k \Norm{u}{k,2 }^\lambda \Norm{u}{k,s }^{1-\lambda},
$$
so 
$$
\Norm{u}{k,q }\leq C \Norm{u}{k,2 }^\lambda \Norm{u}{k+1,2 }^{1-\lambda},
$$
and we conclude in the same way
\end{proof}
\begin{remark}\label{produitadapte} \em
For our applications, me will often use the lemma \ref{Ehrlingadapte} combined with the inequality
$$
 \Norm{uv}{k,2 }\leq  \Norm{u}{k,p } \Norm{v}{k,2 },
$$
true when $kp>n$ (see lemma \ref{lemmePiotr} ) so we need $\frac n k<\frac{2n}{n-4}$ if $n>4$ for $p$ to exist, it  is possible precisely  when $k+2>\frac n2$.
We will also frequently use the lemma \ref{Ehrlingadapte} together with
$$
 \Norm{uv}{k,2 }\leq  \Norm{u}{k,q} \Norm{v}{k+1,2 },
$$
true when $(k+1)q>n$ (see lemma \ref{lemmePiotr} ) so we need $\frac n {k+1}<\frac{2n}{n-2}$ for $q$ to exist,  which  is true again exactly when $k+2>\frac n2$.
\end{remark}

We now bring in some operators and their properties.\\

\noindent {\bf Elliptic operators.}$\mbox{ }$\\
Here we recall a classical results about elliptic operators (see eg.  \cite{Andersson1993} ).
Let $B_{1}$ and $B_{2}$ be two tensor bundles over a  compact manifold $(\m, {\gz})$ and $A: \cinf(B_{1}) \rightarrow \cinf(B_{2})$ be a partial differential linear operator of order $m$ define{d} by
\begin{equation}\label{defAop}
A= \sum_{|\alpha| \infeg m} a_{\alpha}\nz^{\alpha}.
\end{equation}

We say {that} $A$ is an elliptic operator if
\begin{itemize}
\item[\textbullet] For all $\alpha$ such that $|\alpha|=m$ , for all
$\xi^{\alpha} = \xi_{1}^{\alpha_{1}} \ldots \xi_{n}^{\alpha_{n}} \ne 0$,\\
 $a_{\alpha} \xi^{\alpha} : B_{1} \rightarrow B_{2}$ is a tensor bundles isomorphism.
\item[\textbullet] There exists two constants $c_{1}$ and $c_{2}$ such that,  for all $\xi$, 
$$\Norm{a_{\alpha} \xi^{\alpha}}{ {\gz}} < c_{1} \; \norm{\xi^{\alpha}}{ {\gz}} \hspace{.8cm} \text{and} \hspace{.8cm} \Norm{(a_{\alpha} \xi^{\alpha})^{-1}}{ {\gz}} < c_{2} \; \norm{\xi^{\alpha}}{ {\gz}} .$$
\end{itemize}
Let us recall the following  classical elliptic estimate.
\begin{lem}\label{lemestellip}%(Lemme 2.4 %\cite{Andersson1993})
For every elliptic operator $A$ of order $m$, there exists  a positive constant $c = c \, (\gz , k)$ such that for all $u\in \leb{1}{}$ such that $Au\in \sob{k} {2}{}$ then $u\in \sob{k+m} {2}{}$ with :
\begin{equation}\label{sbelemme}
\Norm{u}{k+m,2} \infeg c \, \left(\Norm{Au}{k,2} + \Norm{u}{1}\right).
\end{equation}
 Moreover $A: \sob{k+m} {2}{}\rightarrow \sob{k} {2}{}$ is semi-Fredholm, $\ie A$ has finite dimensional kernel and closed range.
\end{lem}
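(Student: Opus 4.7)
The plan is to follow the classical proof of elliptic regularity on a compact manifold, while taking care to produce $\Norm{u}{1}$ rather than the more usual $\Norm{u}{0,2}$ on the right-hand side. First, for $u$ smooth I would establish the $L^2$-based estimate
$$\Norm{u}{k+m,2}\leq c(\Norm{Au}{k,2}+\Norm{u}{0,2}).$$
On $\R^n$ the pseudodifferential parametrix (or equivalently a frozen-coefficient perturbation argument on a sufficiently small ball) gives this estimate locally for compactly supported $u$. I then cover $\m$ by a finite atlas of such balls and pick a subordinate partition of unity $\{\chi_i\}$; applying the local estimate to each $\chi_i u$ produces commutator terms $[A,\chi_i]u$ of order at most $m-1$, which after Ehrling's inequality (\ref{Ehrling}) are absorbed into $\Norm{u}{k+m,2}$ modulo a lower-order $L^2$ contribution.

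Second, the regularity claim---that $u\in L^1$ together with $Au\in H^k$ forces $u\in H^{k+m}$---follows from the parametrix. On the compact manifold $\m$ one has $L^1\hookrightarrow H^{-s}$ for any $s>n/2$, so $u\in H^{-s}$. If $P$ is a parametrix for $A$, then $PA=I+K$ with $K$ a smoothing operator; hence $u=PAu-Ku$, with $PAu\in H^{k+m}$ (since $P$ is of order $-m$ and $Au\in H^k$) and $Ku\in C^\infty$. Thus $u\in H^{k+m}$, and by density the $L^2$-based estimate extends from smooth $u$ to all of $H^{k+m}$.

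Third, to strengthen $\Norm{u}{0,2}$ to $\Norm{u}{1}$ on the right-hand side, I would argue by contradiction. If no such constant existed, a sequence $u_j\in H^{k+m}$ would satisfy $\Norm{u_j}{k+m,2}=1$ while $\Norm{Au_j}{k,2}+\Norm{u_j}{1}\to 0$. Rellich--Kondrachov extracts a subsequence converging in $L^2$; applying the $L^2$-form of the estimate to differences $u_j-u_l$ shows this subsequence is Cauchy in $H^{k+m}$ and converges to some $u$ with $\Norm{u}{k+m,2}=1$. But $\Norm{u_j}{1}\to 0$ forces $u=0$, a contradiction.

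Finally, for the semi-Fredholm property: on $\ker A$ the improved estimate becomes $\Norm{u}{k+m,2}\leq c\Norm{u}{1}$, so the closed $H^{k+m}$-unit ball of $\ker A$ is $L^1$-bounded and---by Sobolev embedding and Rellich---precompact in $H^{k+m}$ itself, yielding $\dim\ker A<\infty$ via Riesz. For the closed range, given $Au_j\to v$ in $H^k$ with $u_j$ chosen orthogonal to $\ker A$, a first normalisation-plus-Rellich argument shows $\Norm{u_j}{k+m,2}$ is bounded, and a second application of the estimate to differences $u_j-u_l$ together with Rellich gives a Cauchy subsequence in $H^{k+m}$, whose limit solves $Au=v$. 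The main technical obstacle is the regularity step, where $u$ must be pushed from $L^1$ up through the parametrix, but on compact $\m$ with smooth coefficients this is standard.
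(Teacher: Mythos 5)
The paper does not prove Lemma~\ref{lemestellip}; it is stated as a classical result with a pointer to Andersson's 1993 survey, so there is no paper proof to compare against. Your proof is a correct and complete account of the classical argument. A few remarks on the steps you might worry about: the embedding $L^1\hookrightarrow H^{-s}$ for $s>n/2$ on compact $\m$ is indeed obtained by dualising $H^s\hookrightarrow L^\infty$, and the parametrix identity $PA=I+K$ with $K$ smoothing then lifts $u$ from $H^{-s}$ to $H^{k+m}$ exactly as you say; for the upgrade from $\Norm{u}{0,2}$ to $\Norm{u}{1}$, your contradiction argument is sound because $\m$ is compact, so $L^2\hookrightarrow L^1$ and the $L^2$-convergent Rellich subsequence also converges in $L^1$, forcing the limit to vanish; and both halves of the semi-Fredholm claim (finite kernel via Riesz, closed range via a normalisation-plus-compactness argument applied to a sequence in $(\ker A)^\perp$) are handled correctly. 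One small implicit hypothesis worth flagging: the parametrix step requires the coefficients $a_\alpha$ to be smooth, which is consistent with the lemma's dependence $c=c(\gz,k)$ but is not stated explicitly in the surrounding definition~(\ref{defAop}); the rough-coefficient case is treated separately by the paper in Appendix~C (Theorem~\ref{thSF} and Corollary~\ref{corFredholm}) via a perturbation-of-smooth-operators argument rather than a direct parametrix.
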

\noindent
{\bf {The} Killing operator }

We study  the Killing operator $\Sz$ defined on 1-forms by
\begin{equation}
\Sz(Y)_{ij} = \demi (\nz_{i}Y_{j} + \nz_{j}Y_{i}) = \nz_{(i}Y_{j)}.
\end{equation}
{This operator plays an important role when studying 
 the formal adjoint to the constraint  operator (also called the KID's operator).}
The goal  of this section is to recall a   Korn inequality.
%Korn dans Dain arxiv 2005 coro 1.2
\begin{lem}\label{SmajoreX} Let $k\geq -1$. There exist a constants $C>0$, such that for any one form $X\in L^2$ such that $\Sz(X)\in H^{k+1}$ then $X\in H^{k+2}$ and   
$$
\Norm{X}{k+2,2} \infeg C \,( \Norm{\Sz(X)}{k+1,2}+\Norm{X}{0,2}).
$$
\end{lem}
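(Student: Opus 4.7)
The plan is to exploit the overdetermined ellipticity of the Killing operator $\Sz$. Its principal symbol $\sigma(\Sz)(\xi)\colon Y\mapsto\tfrac12(\xi_{i}Y_{j}+\xi_{j}Y_{i})$ is injective for $\xi\neq 0$, so the second-order operator $\Sz^{*}\Sz$ has positive principal symbol proportional to $|\xi|_{\gz}^{2}\operatorname{Id}$, and is therefore elliptic of order two with smooth coefficients (since $\gz$ is smooth). This is what allows the elliptic-regularity machinery of the previous section to be applied.

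Granting this, for $k\supeg 0$ the estimate follows immediately from Lemma \ref{lemestellip} applied to $A=\Sz^{*}\Sz$: whenever $X\in L^{2}\subset L^{1}$ and $\Sz X\in H^{k+1}$, one has $\Sz^{*}\Sz X\in H^{k}$ (since $\Sz^{*}$ is a first-order operator with smooth coefficients), so
$$
\Norm{X}{k+2,2}\infeg c\bigl(\Norm{\Sz^{*}\Sz X}{k,2}+\Norm{X}{1}\bigr)\infeg C\bigl(\Norm{\Sz X}{k+1,2}+\Norm{X}{0,2}\bigr),
$$
where at the last step I used $\Norm{\Sz^{*}\Sz X}{k,2}\infeg C\Norm{\Sz X}{k+1,2}$ together with the compactness of $\m$ to control $\Norm{X}{1}\infeg C\Norm{X}{0,2}$.

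The delicate step is therefore the base case $k=-1$, which is precisely the classical second Korn inequality $\Norm{X}{1,2}\infeg C(\Norm{\Sz X}{0,2}+\Norm{X}{0,2})$. Here the datum $\Sz X$ only lies in $L^{2}$, so $\Sz^{*}\Sz X\in H^{-1}$ and Lemma \ref{lemestellip} does not apply as stated. I would prove this case via a Bochner/Weitzen\-böck identity: from the splitting $\nabla_{i}X_{j}=\Sz(X)_{ij}+\tfrac12(dX)_{ij}$ combined with commutation of covariant derivatives, one obtains for smooth $X$
$$
\int_{\m}|\nabla X|_{\gz}^{2}\,d\mu(\gz)=2\int_{\m}|\Sz X|_{\gz}^{2}\,d\mu(\gz)-\int_{\m}(\div X)^{2}\,d\mu(\gz)+\int_{\m}\Ric(\gz)(X,X)\,d\mu(\gz).
$$
Dropping the nonpositive $(\div X)^{2}$ term and using $|\Ric(\gz)(X,X)|\infeg C|X|_{\gz}^{2}$ on the compact manifold $\m$ closes the estimate; a standard smooth-approximation / mollification argument then extends it to all $X\in L^{2}$ with $\Sz X\in L^{2}$, yielding both the regularity statement $X\in H^{1}$ and the announced inequality. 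This base case is the main (essentially the only) obstacle, since the higher-$k$ regularity and estimate then follow by the elliptic bootstrap above.
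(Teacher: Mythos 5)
Your proof is correct, and for $k\geq 0$ it takes a genuinely different route from the paper's. For the base case $k=-1$ (second Korn inequality) you proceed essentially as the paper does, by integrating $|\Sz X|^2$ by parts over $\m$; your sign on the Ricci term is the standard one (the paper's display carries the opposite sign, apparently a slip, though on a compact manifold the Ricci term is absorbed into $C\Norm{X}{0,2}^2$ either way, so the estimate is unaffected). For $k\geq 0$, the paper exploits the pointwise identity (\ref{b29}), which reconstructs $\nz^2 X$ algebraically from $\Riem\,\gz\cdot X$ and first covariant derivatives of $\Sz X$; taking $W^{k,2}$-norms of this identity and combining with the base case and the Ehrling inequality (\ref{Ehrling}) then closes the argument without appealing to any elliptic black box. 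You instead pass to the second-order operator $\Sz^*\Sz$ and invoke the elliptic estimate of Lemma \ref{lemestellip}, which is also valid here precisely because $\Sz^*\Sz$ has coefficients built from the smooth reference metric $\gz$. One inaccuracy to correct: the principal symbol of $\Sz^*\Sz$ is not proportional to $|\xi|_\gz^2\operatorname{Id}$. A direct computation gives $\sigma(\Sz^*\Sz)(\xi)=\tfrac12\bigl(|\xi|_\gz^2\operatorname{Id}+\xi\otimes\xi\bigr)$, which is not a scalar multiple of the identity (its eigenvalues are $\tfrac12|\xi|_\gz^2$ with multiplicity $n-1$ and $|\xi|_\gz^2$ with multiplicity $1$); it is nonetheless positive definite for $\xi\neq 0$, so $\Sz^*\Sz$ is still elliptic in the sense required by Lemma \ref{lemestellip}, and your application of the elliptic estimate remains justified.
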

\begin{proof}
For $k=-1$, we compute 
$$
2\int |\Sz(X)|^2=
2\int \Sz(X)_{ij}\nz^iX^j=\int|\nz X|^2+\int \nz_jX_i\nz^iX^j=\int|\nz X|^2-\int X_i\nz_j\nz^iX^j,
$$
we deduce 
$$
2\int |\Sz(X)|^2=\int|\nz X|^2+\int div(X)^2+Ric(\gz)(X,X),
$$
so we  easily obtain
\begin{equation}\label{kvmu}
\Norm{X}{1,2} \infeg C \,( \Norm{\Sz(X)}{0,2}+\Norm{X}{0,2}).
\end{equation}
If $k\in\mathbb N$ we also use 
 the following identity  (see e.g. equation (29) of \cite{Bartnik2005} for example)
\begin{equation}\label{b29}
\nz^{2}_{kj}X_{i} :=\nz_k\nz_jX_i= Riem \, \gz_{ijkl} X^{l} + \nz_{k} \Sz (X)_{ij} + \nz_{j} \Sz (X)_{ik} - \nz_{i} \Sz (X)_{jk}.
\end{equation}
This leads to
\begin{eqnarray}\label{b29bis}
\Norm{\nz^{2}X}{k,2} &\infeg& \Norm{Riem \,\gz \; X}{k,2} + c \, \Norm{\nz \Sz (X)}{k,2} \nonumber\\
 %&\infeg& \Norm{Riem \,\gz}{\infty,0} \Norm{X}{2} + c \, \Norm{\nz \Sz (X)}{2} \nonumber\\
 &\infeg& c \, \Norm{X}{k,2} + c \, \Norm{\nz \Sz (X)}{k,2}.
\end{eqnarray}
%A consequence of Lemma (S majore X) is 
%\begin{equation}\label{conslemyinfcs}
%\Norm{X}{k,2} \infeg \Norm{X}{k+1,2} \infeg c \, \Norm{\Sz(X)}{k,2},
%\end{equation}
%which imply, with (\ref{b29bis})
%\begin{equation}\label{b29ter}
%\Norm{\nz^{2}X}{k,2} \infeg c \, \Norm{\Sz(X)}{k+1,2},
%\end{equation}
%and considering (\ref{conslemyinfcs}),
%\begin{equation*}
%\Norm{X}{k+2,2} \infeg c_{1} \, \Norm{\Sz (X)}{k+1,2}.
%\end{equation*}
The equations (\ref{kvmu}) , (\ref{b29bis})  and the Ehrling inequality (\ref{Ehrling}) close the proof.
\end{proof}
\begin{remark} \em
During the proof the following interesting operator apear 
$$
\Uz(X)_{kji} :=\nz_k\nz_jX_i- Riem \, \gz_{ijkl} X^{l} = \nz_{k} \Sz (X)_{ij} + \nz_{j} \Sz (X)_{ik} - \nz_{i} \Sz (X)_{jk}.
$$
\end{remark}
\noindent
{\bf A  shifted Hessian operator}

When studying the KID's, a natural operator acting on function apear :
$$
 L(N)_{ij}:=\n_{i} \n_{j} N - g_{ij}\laplag N - \lbrack R_{ij} - \tfrac{1}{2}  (R(g) - 2 \Lambda) g_{ij} \rbrack N.
$$
We thus define the operator 
$$
T(N)=\nabla \nabla N -\lbrack Ric(g)-\frac{1}{2(n-1)}(R(g)+2\Lambda)g\rbrack N,
$$
so $L(N)=T(N)-\tr_g(T(N))g$.
Using the Ehrling inequality (\ref{Ehrling}) we immediately obtain :
\begin{lem}\label{TmajoreN}
Let $k\in\mathbb N$. There exist a constant $C>0$  such that  If $N\in L^2$ is such that $\Tz(N)\in H^k$ then $N\in H^{k+2}$ and 
$$
\Norm{N}{k+2,2}{}\leq C(\Norm{\Tz(N)}{k,2}{} +\Norm{N}{0,2}{}).
$$ 
\end{lem}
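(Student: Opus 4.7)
The plan is to exploit the fact that the principal part of $\Tz$ is the Hessian $\nz^{2}N$, and that its trace against $\gz$ is the usual scalar Laplacian $\laplaz$, so that standard elliptic regularity for $\laplaz$ together with an Ehrling absorption gives the result.

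The first step is to reduce to the Laplacian. The definition of $\Tz$ gives, by tracing with $\gz$,
$$
\tr_{\gz}\Tz(N) = \laplaz N - \Bigl[R(\gz) - \tfrac{n}{2(n-1)}\bigl(R(\gz)+2\Lambda\bigr)\Bigr] N =: \laplaz N - q\,N,
$$
where $q\in\cinf(\m)$ depends only on the (smooth) reference data $\gz$ and $\Lambda$. The operator $\laplaz - q$ is therefore a second-order elliptic operator on $\m$ with smooth coefficients.

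The second step is to apply the elliptic regularity/estimate of Lemma \ref{lemestellip} to $\laplaz - q$. From $N\in L^2$ and $\tr_{\gz}\Tz(N)\in H^{k}$ one obtains $N\in \sob{k+2}{2}{}$ and
$$
\Norm{N}{k+2,2} \leq C\bigl(\Norm{(\laplaz-q)N}{k,2}+\Norm{N}{1}\bigr).
$$
Since tracing against $\gz$ is bounded and $q$ is smooth on the compact manifold $\m$, this in turn is controlled by $C\bigl(\Norm{\Tz(N)}{k,2}+\Norm{N}{k,2}\bigr)$, while $\Norm{N}{1}\leq C\Norm{N}{0,2}$ by H\"older on $\m$ compact.

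The final step, which is where the announced use of Ehrling enters, is to absorb the intermediate norm: for any $\epsilon>0$ the inequality (\ref{Ehrling}) applied with $j=k$ gives
$$
\Norm{N}{k,2}\leq \epsilon\,\Norm{N}{k+2,2}+C(\epsilon)\Norm{N}{0,2}.
$$
Choosing $\epsilon$ small enough to absorb $\epsilon C\,\Norm{N}{k+2,2}$ into the left-hand side yields the claimed bound. No step presents a genuine obstacle; the content is essentially the packaging of scalar elliptic regularity for $\laplaz$ with interpolation, which is why the statement is labelled as immediate.
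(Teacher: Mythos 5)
Your proof is correct, but it takes a genuinely different route from what the paper intends. The paper's proof is terse ("Using the Ehrling inequality we immediately obtain"), and the parallel with Lemma \ref{SmajoreX} makes the intended argument clear: solve the definition for the Hessian, $\nz^2 N = \Tz(N) + \bigl[Ric(\gz)-\tfrac{1}{2(n-1)}(R(\gz)+2\Lambda)\gz\bigr]N$, bound $\Norm{\nz^2 N}{k,2} \leq \Norm{\Tz(N)}{k,2}+C\Norm{N}{k,2}$ since the bracketed coefficient is smooth, observe $\Norm{N}{k+2,2}\leq \Norm{N}{1,2}+\Norm{\nz^2 N}{k,2}$, and then absorb the intermediate norms with Ehrling. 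Your route instead traces $\Tz(N)$ against $\gz$ to obtain the scalar elliptic operator $\laplaz - q$ with smooth $q$ and invokes Lemma \ref{lemestellip}. Both are valid; yours has the advantage of making the regularity gain $N\in L^2 \Rightarrow N\in H^{k+2}$ explicit via standard scalar elliptic regularity, whereas the direct Hessian route leaves that step implicit. Note however that your concluding Ehrling absorption is redundant: since $(\laplaz-q)N = \tr_\gz\Tz(N)$ exactly by your choice of $q$, you have $\Norm{(\laplaz-q)N}{k,2}\leq C\Norm{\Tz(N)}{k,2}$ with no extraneous $\Norm{N}{k,2}$ term, so Lemma \ref{lemestellip} already delivers the stated estimate once $\Norm{N}{1}\leq C\Norm{N}{0,2}$ is used; the spurious $\Norm{N}{k,2}$ you introduced is what forced the final Ehrling step.
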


\section{The constraint operator }\label{sectionopecontrainte}

Let $\m$ be a $n$-dimensional connected smooth compact oriented manifold. We fix a smooth Riemannian metric $\gz$ on $\m$. Let $\tau\in\R$ and let
$$
\Kz=\tau \gz.
$$
 We consider $\m$ as a  {spacelike  hypersurface} of a $(n+1)$-dimensional Lorentzian manifold $(\N, \gamma)$, from now on called spacetime. We will identify  the two manifolds by different indices: Latin indices will take values from $1$ to $n$ and are spatial indices whereas Greek indices will take values from $0$ to $n$ and are spacetime indices. $K$ is the second fundamental form of $\m$ in $\N$ defined by
\begin{equation}\label{defk}
K(X,Y) = \gamma (X, \precig{\gamma}\n_{Y} \vec n) ,
\end{equation}
where $\precig{\gamma}\n$ is the spacetime connection on $T\N$ , $X,Y \in T\m$ and $\vec n$ is the future-directed unit normal to $\m$ in $\N$. It is convenient to consider the conjugate momentum $\pi$ as a reparametrisation of $K$
\begin{equation}\label{defpi}
\pi^{ij} = \w{\pi}^{ij} \sqrt{g} \; \; \; \text{with} \; \; \w{\pi}^{ij} = K^{ij} -  {\tr}_{g}K g^{ij},
\end{equation}
where $\sqrt{g}$ is the  (relative) volume measure of the metric $g$ :
\begin{equation*}\label{defsqrtg}
\sqrt{g} = \frac{\sqrt{det(g)}}{\sqrt{det(\gz)}} \, ,
\end{equation*}
identified with the volume form 
\begin{equation*}
d\mu(g)= \sqrt{g} \, d\mu(\gz).
\end{equation*}
%Note that $\n (\sqrt{g}) = 0$ since the covariant derivative of the volume form vanishes.\\
The field $\w{\pi}$ is a section of the bundle $S^{2}T\m$ of symmetric bilinear forms on $\m$, whereas $\pi$ is a section of the bundle $\w{S}=S^{2}T\m \otimes \Lambda^{n}T^{*}\m$ of symmetric $2$-tensors-valued densities ($n$-forms) on $\m$.

We define the constraint operator $\phibf=(\phio , \phii)$ as follows:
\begin{eqnarray}
\phiogpi &:=& \left(R(g) - 2 \Lambda - \norm{K}{g}^{2} + ( {\tr}_{g}K)^{2}\right) \sqrt{g} \nonumber\\
&=& \left(R(g) - 2 \Lambda\right) \sqrt{g} - \left(\norm{\pi}{g}^{2} - \tfrac{1}{n-1} ( {\tr}_{g}\pi)^{2}\right) \slash \sqrt{g}.\\
\phiigpi &:=& 2 (\n^{j}K_{ij} - \n_{i}( {\tr}_{g}K) )\sqrt{g} \nonumber\\
&=& 2 g_{ij} \n_{k} \pi^{jk} =  2 g_{ij} \n_{k} \w{\pi}^{jk} \sqrt{g}. \label{ECCG}
\end{eqnarray}

If the spacetime satisfies Einstein's equations, the normalisation chosen insures that the constraint operator and the energy-momentum tensor are related by $$ \phibf_{\alpha} = 16 \pi G T_{\vec n\alpha} \sqrt{g} \, ,$$
where $G$ is Newton's gravitational constant. $\xi = (N,X^{i})$ is the \emph{lapse-shift} associated to the spacetime foliation. 

We denote by $\T := T\N_{|\m}$ the spacetime tangent bundle restricted  to $\m$. The following spaces will be used along the paper (recall we would like an easy adaptation to non compact setting)
\begin{equation*}
\begin{array}{ccl}
\G& := &\sob{k+2}{2}{} (\S).\\
\K& := &\lbrace \pi : \pi -\piz \in \sob{k+1}{2}{} (\w{\S}) \rbrace.\\
\G^{+}& := &\lbrace g : g-\gz \in \G , g>0\rbrace.\\
\G^{+}_{\lambda}& := &\lbrace g \in \G^{+} : \lambda \gz < g < \lambda^{-1} \gz\rbrace \; , \; 0 < \lambda < 1.\\
\L^{*}& := &\sob{k}{2}{}(\T^{*} \otimes \Lambda^{n} T^{*}\m) \, \, \text{is the dual space of} \, \, \L := \sob{-k}{2}{}(\T).
\end{array}
\end{equation*}
From $(\ref{incholdn})$, tensors in $\G$ are H\"older-continuous
% of order $\demi$
and thus, matrices inequalities in  $\G^{+}$  are satisfied pointwise. In particular, for all metric $g \in \G^{+}_\lambda $, metrics $g$ and $\gz$ are equivalent in the following sense :
\begin{equation}\label{holdcontcg}
 \forall x \in \m \, , \forall v \in T_x\m\;,\;\;
\lambda \gz_{ij}(x) \, v^{i}v^{j} < g_{ij}(x) \, v^{i}v^{j} < \lambda^{-1} \gz_{ij}(x) \, v^{i}v^{j} \; .
\end{equation}
$\F = \Gplus \times \K$ will be the phase space of the constraint operator $\phibf$. We will use $(g,\pi)$ as well as $(g,K)$ to denote coordinates on $\F$.\\[.5cm]
Let $\cdez{}{}{}$ and $\nz$ (resp. $\cde{}{}{}$ and $\n$) be respectively  the Christoffel symbols and the Levi-Civita connection of $\gz$ (resp. $g$). We define
\begin{equation}\label{defA}
\ade{i}{k}{j} = \cde{i}{k}{j}-\cdez{i}{k}{j}.
\end{equation}
It is well known that
\begin{equation}\label{A}
\ade{i}{k}{j}  = \demi g^{kl} (\nz_{i} g_{jl} + \nz_{j} g_{il} - \nz_{l} g_{ij}).
\end{equation}
The scalar curvature of $g$ can be formulate using $\nz$ and $\ade{i}{j}{k}$ (see eq. (21) of \cite{Bartnik2005}):

\begin{eqnarray}\label{courbscalcov}
R(g) &=& g^{jk} Ric\, \gz_{jk} + g^{jk}(\nz_{i} \ade{j}{i}{k} - \nz_{j} \ade{i}{i}{k} + \ade{j}{l}{k}\ade{i}{i}{l} - \ade{j}{i}{l}\ade{k}{l}{i}) \nonumber\\
&=& g^{jk} Ric\, \gz_{jk} + Q(g^{-1}, \nz g) + g^{ik} g^{jl}(\nz^{2}_{ij} g_{kl} - \nz^{2}_{ik} g_{jl})
\end{eqnarray}
where Q is a sum of quadratic terms in $g^{-1}, \nz g$.

This result relies on the following fact:
%\erw{mettre des ref pour les lemmes}

\begin{equation}\label{riccicov}
Ric \, g_{jk} - Ric \, \gz_{jk} = \nz_{i} \ade{j}{i}{k} - \nz_{j} \ade{i}{i}{k} + \ade{j}{\mu}{k}\ade{i}{i}{\mu} - \ade{j}{i}{\mu}\ade{k}{\mu}{i}
\end{equation}

Here we show $\phibf$ is a well-defined mapping between the Hilbert spaces $\F$ and $\L^{*}$.
\begin{prop}{}\label{phidef}  Let $ 0 < \lambda < 1$. There exists a positive constant $c = c \, (\lambda, \gz)$ such that
for all $(g,\pi) \in \G^{+}_{\lambda}\times \K$ , 
\begin{eqnarray}\label{16g}
\Norm{\phiogpi}{k,2} &\infeg& c \, \big(1 + \Norm{g-\gz}{k+2,2}^{2} + \Norm{\pi- \piz}{k+1,2}^{2}\big)\\
\Norm{\phiigpi}{k,2} &\infeg& c \, \Big(\Norm{\nz (\pi-\piz)}{k,2} +\Norm{\nz g}{k+1,2} (1+ \Norm{\pi- \piz}{k+1,2})\Big)
\end{eqnarray}
\end{prop}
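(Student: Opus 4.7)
The plan is, in both estimates, to reduce every product to the inequalities of Lemma \ref{lemmePiotr} and the remark containing \eqref{19gk}, after (i)~plugging the explicit formula \eqref{courbscalcov} for $R(g)$ into the Hamiltonian part, and (ii)~writing $\nabla=\nz+A$ via \eqref{defA}--\eqref{A} in the momentum part. The indispensable background fact is that, because $k+2>n/2$, the space $H^{k+2}$ embeds into $L^{\infty}$ (cf.\ \eqref{incholdn}) and is a Moser algebra, so composition with smooth maps whose argument stays inside a fixed pointwise region (here the one determined by $g\in\G^{+}_{\lambda}$ via \eqref{holdcontcg}) is tame. As a preliminary step I would establish the Moser-type bounds asserting that, for $g\in\G^{+}_{\lambda}$, each of $g^{-1}-\gz^{-1}$, $\sqrt{g}-1$ and $1/\sqrt{g}-1$ lies in $H^{k+2}$ with norm controlled by a polynomial in $\|g-\gz\|_{k+2,2}$ whose coefficients depend only on $\lambda$ and $\gz$.

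For \eqref{16g}, feed \eqref{courbscalcov} into $(R(g)-2\Lambda)\sqrt{g}$ and distribute: the linear piece $g^{jk}\Ric\,\gz_{jk}\sqrt{g}$ lies in $H^{k+2}\subset H^{k}$; the quadratic piece $Q(g^{-1},\nz g)\sqrt{g}$ is bilinear in $\nz g$ with $H^{k+2}$-coefficients, and \eqref{19gk} with $u=v=g-\gz$ controls it in $H^{k}$ by $\|g-\gz\|_{k+2,2}^{2}$; the two-derivative piece $g^{-1}g^{-1}\nz^{2}g\,\sqrt{g}$ is handled by iterating Lemma \ref{lemmePiotr} with $K=k+2$, $p=2$, $m=0$, $l=k$, $q=2$, which provides continuity of $H^{k+2}\cdot H^{k}\to H^{k}$. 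For the momentum contribution $(|\pi|_{g}^{2}-\tfrac{1}{n-1}(\tr_{g}\pi)^{2})/\sqrt{g}$, split $\pi=\piz+(\pi-\piz)$ with $\piz$ smooth, and apply Lemma \ref{lemmePiotr} with $K=k+2$, $p=2$, $m=1$, $l=k+1$, $q=2$, which yields $\|(\pi-\piz)^{2}\|_{k,2}\leq c\|\pi-\piz\|_{k+1,2}^{2}$; the linear-in-$(\pi-\piz)$ cross terms are absorbed into $1+\|\pi-\piz\|_{k+1,2}^{2}$.

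For the momentum constraint, the key simplification is that $\Kz=\tau\gz$ makes $\piz$ a constant multiple of $\gz^{-1}$ (viewed as a tensor density, using $\sqrt{\gz}=1$), so $\nz\piz=0$ by $\nz$-parallelism of $\gz$. Using \eqref{defA}--\eqref{A} the divergence splits as
\[
\nabla_{k}\pi^{jk}=\nz_{k}(\pi-\piz)^{jk}+A\cdot(\pi-\piz)+A\cdot\piz,
\]
with $A$ bilinear in $g^{-1}$ and $\nz g$, hence $\|A\|_{k+1,2}\leq c\|\nz g\|_{k+1,2}$ after applying Lemma \ref{lemmePiotr} to the factorisation $A=g^{-1}\cdot\nz g$. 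The first summand contributes $\|\nz(\pi-\piz)\|_{k,2}$; the second, via Lemma \ref{lemmePiotr} with $K=k+2$, $p=2$, $m=1$, $l=k+1$, $q=2$, contributes $c\|A\|_{k+1,2}\|\pi-\piz\|_{k+1,2}$; the third, using the smoothness of $\piz$, contributes $c\|A\|_{k,2}\leq c\|\nz g\|_{k+1,2}$. Left-multiplication by $g_{ij}$ is absorbed using the uniform $L^{\infty}$ bound on $g$ coming from $\G^{+}_{\lambda}$, completing the second inequality.

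The main obstacle is the preliminary Moser-type estimate in $H^{k+2}$ for $g^{-1}$ and $\sqrt{g}$, without which the $g^{-1}$ and $\sqrt{g}$ factors appearing in $\phibf$ cannot be placed in $H^{k+2}$, and the bookkeeping needed to keep the dependence on $\|g-\gz\|_{k+2,2}$ exactly quadratic rather than higher-degree. Everything else is a mechanical application of the two useful regimes of Lemma \ref{lemmePiotr}, namely $m=0,l=k,q=2$ for the two-derivative curvature term and $m=1,l=k+1,q=2$ for the momentum and connection-difference terms, combined with the product inequality \eqref{19gk}.
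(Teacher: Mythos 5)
Your proposal follows the same route as the paper: plug \eqref{courbscalcov} into the Hamiltonian part, split $\nabla=\nz+A$ and use $\nz\piz=0$ in the momentum part, and reduce every product to Lemma~\ref{lemmePiotr} and \eqref{19gk}. The one point you make explicit that the paper leaves implicit is the Moser-type $H^{k+2}$ control of $g^{-1}$, $\sqrt{g}$, $1/\sqrt{g}$ on $\G^{+}_{\lambda}$ (the paper absorbs this into \eqref{holdcontcg} and Lemma~\ref{lemmePiotr}); note also that absorbing the final $g_{ij}$ factor requires its $H^{k+2}$-bound (and Lemma~\ref{lemmePiotr}), not merely the $L^{\infty}$ bound from $\G^{+}_{\lambda}$, which your preliminary step in fact already supplies.
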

\textbf{Proof}: From $R(g)$ expression $(\ref{courbscalcov})$, we get
\begin{eqnarray*}
\phiogpi &=& (R(g) - 2 \Lambda) \sqrt{g} - (\norm{\pi}{g}^{2} - \tfrac{1}{n-1} ( {\tr}_{g}\pi)^{2}\rbrack \slash \sqrt{g}\\
&=& \lbrack R(g)- R(\gz) + R(\gz) - 2 \Lambda + n(n-1) \tau^{2}\rbrack \sqrt{g} - \lbrack \norm{\pi- \piz}{g}^{2} + 2(\pi - \piz)_{ij}\piz^{ij} \rbrack \slash \sqrt{g} \\
&& + \tfrac{1}{n-1} \lbrack ( {\tr}_{g}(\pi- \piz))^{2} + ((g- \gz)_{ij}\piz^{ij})^{2} + 2 (g- \gz)_{ij}\piz^{ij} {\tr}_{\gz}\piz + 2 \,  {\tr}_{g}(\pi- \piz) \,  {\tr}_{g}\piz \rbrack \slash \sqrt{g}.
\end{eqnarray*}
Since $g \in \G^{+}_{\lambda}$, we can use (\ref{holdcontcg}) and from Cauchy-Schwarz inequality and $2ab \infeg a^{2} + b^{2}$
\begin{eqnarray*}
\norm{\phiogpi}{\gz} &\infeg& \lbrack \norm{R(g) - R(\gz)}{\gz} + \norm{R(\gz) - 2 \Lambda + n(n-1) \tau^{2}}{\gz} \rbrack \sqrt{g}\\
&& + c \, \lbrack 1 + \norm{\pi- \piz}{g}^{2}  + \norm{g- \gz}{g}^{2} \rbrack \slash \sqrt{g}.
\end{eqnarray*}
From (\ref{riccicov}), $Ric \, g - Ric \, \gz \simeq \nz A + A^{2} \simeq (\nz g) ^{2} + g \nz^{2}g + g^{-2} (\nz g) ^{2}$.\\
Using lemma \ref{lemmePiotr} and remark \ref{produitadapte},
\begin{eqnarray*}
\Norm{Ric \, g - Ric \, \gz}{k,2} 
&\infeg& c \, \Norm{g - \gz}{k+2,2},
\end{eqnarray*}
 and 
\begin{eqnarray*}
\Norm{R(g) - R(\gz)}{k,2} 
&\infeg& c \, \Norm{g - \gz}{k+2,2}.
\end{eqnarray*}
In particular, we have 
\begin{eqnarray}
Ric \, g - Ric \, \gz \in W^{k,2} , \label{integdifric}\\
R(g) - R(\gz) \in W^{k,2} . \label{integdifscal}
\end{eqnarray}
Thanks to (\ref{integdifscal}) and lemma \ref{lemmePiotr}, we obtain the estimate
\begin{eqnarray*}
\Norm{\phiogpi}{k,2}&\infeg& c \left( 1 + \Norm{(\pi- \piz)^{2}}{k,2} +  \Norm{(g- \gz)^{2}}{k,2} \right)\\
&\infeg& c \left( 1 + \Norm{\pi- \piz}{k+1,2}^{2} +  \Norm{g- \gz}{k+2,2}^{2} \right),
\end{eqnarray*}
hence $\phiogpi \in \L^{*}$.\\
For $\phiigpi$ , using (\ref{defA}), we have  
\begin{equation*}
\phiigpi = 2 g_{ij}(\nz_{k} (\pi- \piz)^{jk} + \ade{k}{j}{l} \, (\pi-\piz)^{kl}) + \ade{k}{j}{l} \, \piz^{kl}).
\end{equation*}
Considering (\ref{A}), $\phiigpi$ is of the form
\begin{equation}\label{phiidef}
\phiigpi \simeq g (\nz (\pi- \piz) + g^{-1} \nz g \, (\pi- \piz) +  g^{-1} \nz g \, \piz),
\end{equation}
thus again by lemma \ref{lemmePiotr}  and remark \ref{produitadapte},
\begin{eqnarray*}
\Norm{\phiigpi}{k,2} &\infeg& c \, (\Norm{\nz (\pi- \piz)}{k,2} + \Norm{\nz g \, (\pi- \piz)}{k,2} + \Norm{\nz g \, \piz}{k,2})\\
 &\infeg& c \, (\Norm{\nz (\pi- \piz)}{k,2} + \Norm{\nz g}{k+1,2} \Norm{\pi- \piz}{k+1,2} + \Norm{\nz g}{k+1,2}\Norm{\piz}{k+1,2}) \\
 &\infeg& c \, \big(\Norm{\nz (\pi- \piz)}{k,2} + \Norm{\nz g}{k+1,2} (1+ \Norm{\pi- \piz}{k+1,2})\big). \cqfd
\end{eqnarray*}
We now bring in the principal result of the section.
\begin{prop}{}
The map $\phibf : \F \rightarrow \L^{*}$ is a smooth map between Hilbert spaces.
\end{prop}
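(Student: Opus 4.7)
The plan is to exhibit $\phibf$ as a finite sum of products of smooth building blocks and then invoke the fact that continuous multilinear maps between Banach spaces are automatically smooth (with $n$-th Fréchet derivative constant). Reading the formulas for $\phio$ and $\phii$, we see that $\phibf$ is a polynomial expression in the following ingredients: the components of $g$, of $g^{-1}$, of $\nz g$, of $\nz^{2}g$, of $\sqrt g$ and $1/\sqrt g$, of $\pi$ and of $\nz\pi$, together with constant quantities built from $\gz$ (namely $\Ric\,\gz$, $R(\gz)$, $\piz$, $\tau$, $\Lambda$). The differentiations $\pi\mapsto\nz\pi$ and $g\mapsto\nz g,\nz^{2}g$ are bounded linear, hence smooth, from $\sob{k+1}{2}{}\to\sob{k}{2}{}$ and $\sob{k+2}{2}{}\to\sob{k+1}{2}{}\,\text{(resp. }\sob{k}{2}{})$.

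Next I would handle the two nonlinear scalar operations. For the inversion $g\mapsto g^{-1}$ on the open set $\Gplus$, the formula
\[
(g+h)^{-1}=g^{-1}\sum_{j\geq 0}(-h\,g^{-1})^{j}
\]
converges in the Banach algebra $(\sob{k+2}{2}{},\|\cdot\|_{k+2,2})$ for $\|h\|_{k+2,2}$ small, because $k+2>n/2$ makes $\sob{k+2}{2}{}$ a multiplication algebra by Lemma \ref{lemmePiotr} and inequality (\ref{19gk}); this gives both the existence and analyticity of $g\mapsto g^{-1}$. For the volume element $g\mapsto\sqrt g$, writing $\sqrt g=\sqrt{\det(\gz^{-1}g)}$ and expanding $\sqrt{1+x}$ for $x=\det(\gz^{-1}g)-1$ (which is in $\sob{k+2}{2}{}$ and small in $L^{\infty}$ on $\G^{+}_{\lambda}$) yields the same conclusion; and $1/\sqrt g$ is handled in the same manner. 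Thus $g\mapsto(g^{-1},\sqrt g,1/\sqrt g)$ is a smooth map $\Gplus\to \sob{k+2}{2}{}\times\sob{k+2}{2}{}\times\sob{k+2}{2}{}$.

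After this reduction, each term appearing in $\phio$ or $\phii$ is a multilinear expression in variables $(u_{1},\ldots,u_{N})$ of the form
\[
(g^{-1})^{a}\,(\nz g)^{b}\,(\nz^{2}g)^{c}\,\pi^{d}\,(\nz\pi)^{e}\,(\sqrt g)^{\pm 1},
\]
with the number of factors of $\nz^{2}g$ or $\nz\pi$ being at most one in each term of $\phibf$. It therefore suffices to check that the corresponding multilinear multiplication maps are continuous into $\sob{k}{2}{}$. This is precisely what Lemma \ref{lemmePiotr} together with the estimate (\ref{19gk}) and Remark \ref{produitadapte} provide: products of factors in $\sob{k+2}{2}{}$ with one factor in $\sob{k}{2}{}$ (such as $\nz^{2}g$) land in $\sob{k}{2}{}$, and products of factors in $\sob{k+2}{2}{}$ with one factor in $\sob{k+1}{2}{}$ (such as $\nz\pi$ or $\nz g$) also land in $\sob{k}{2}{}$ via the second product estimate of Remark \ref{produitadapte}. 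Continuous multilinear maps between Banach spaces are $C^{\infty}$ (their Taylor series terminates), so each term is smooth, the sum is smooth, and composition with the smooth nonlinear preparations $g\mapsto(g^{-1},\sqrt g,1/\sqrt g)$ preserves smoothness.

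The main obstacle is making sure that the multiplication estimates really close at this low regularity, i.e.\ that whenever a term contains the ``bad" factor $\nz^{2}g\in\sob{k}{2}{}$ or $\nz\pi\in\sob{k}{2}{}$, all remaining factors can be absorbed by a norm in $\sob{k+2}{2}{}$ acting continuously on $\sob{k}{2}{}$. This is exactly the hypothesis $k+2>n/2$, which is why Remark \ref{produitadapte} was set up in advance; once this bookkeeping is done termwise in (\ref{courbscalcov}) and (\ref{phiidef}), smoothness of $\phibf$ follows at once from the general principle that a polynomial map built from continuous multiplications and a smooth inversion/square-root is $C^{\infty}$.
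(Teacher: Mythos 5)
Your proof is correct, and it reaches the same conclusion via a genuinely different final step than the paper. Both proofs begin with the same decomposition of $\phibf$ as a polynomial expression $F(g,g^{-1},\sqrt g,1/\sqrt g,\nz g,\nz^{2}g,\pi,\nz\pi)$; the divergence is in how smoothness of $F$ is concluded. The paper's proof (following Bartnik) first establishes that $\phibf$ is locally bounded on $\F$ using the estimates of Proposition \ref{phidef}, then cites a theorem of Hille on locally bounded polynomial functionals between Banach spaces to conclude Fréchet-differentiability of all orders. You instead bypass Hille's theorem entirely: you check directly, term by term, that each monomial in $F$ defines a continuous multilinear map into $\sob{k}{2}{}$ (via Lemma \ref{lemmePiotr}, inequality (\ref{19gk}), and Remark \ref{produitadapte}), using the observation that each term carries at most one ``low-regularity'' factor $\nz^{2}g$ or $\nz\pi$, and then invoke the elementary fact that a continuous multilinear map between Banach spaces is $C^{\infty}$ with terminating Taylor series. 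Your route is more self-contained and makes the role of $k+2>n/2$ visible at the level of each product; the paper's route is shorter but relies on a nontrivial external citation. The treatment of the nonlinear preprocessing $g\mapsto(g^{-1},\sqrt g,1/\sqrt g)$ is the same in spirit in both; one small caveat in yours is that the expansion of $\sqrt{1+x}$ should be performed around an arbitrary base point $g_{0}\in\Gplus$ rather than around $\gz$ alone (just as you do for the Neumann series for $g^{-1}$), since smallness of $\det(\gz^{-1}g)-1$ is not guaranteed across all of $\G^{+}_{\lambda}$; this is easily repaired and does not affect the conclusion.
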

\textbf{Proof}: We recall the proof of \cite{Bartnik2005} for completeness. From Proposition \ref{phidef}, 
$$\Norm{\phigpi}{\L^{*}} \infeg c (1 + \Norm{g-\gz}{\G}^{2} + \Norm{\pi-\piz}{\K}^{2}),$$ $\ie \phibf$ is locally bounded on $\F$. The polynomial structure of the constraint operator allows us to show $\phibf$ is smooth, $\ie$ indefinitely differentiable in a Fr\'echet sense. From the expression (\ref{courbscalcov}) of scalar curvature and given (\ref{phiidef}) , $\phibf$ can be expressed as
$$\phigpi = F (g, g^{-1}, \sqrt{g} , 1 \slash \sqrt{g} , \nz g, \nz^{2} g, \pi , \nz \pi) \, , $$
where $F= F(a_{1}, \ldots ,a_{8})$ is a polynomial function quadratic in $a_{5}$ and $a_{7}$ and linear in the remaining parameters. The map $g \mapsto (g, g^{-1}, \sqrt{g} , 1 \slash \sqrt{g})$ is analytic on the space of positive definite matrices and the maps $g \mapsto \nz g \, , \, g \mapsto \nz^{2} g$ and $\pi \mapsto \nz \pi$ are bounded linear, thus smooth, from $\F$ to $\L^{*}$, which are Hilbert spaces. A result from Hille \cite{Hille1957}  on locally bounded polynomial functional shows $\phibf$ admit continuous Fr\'echet-derivatives of all orders.$\cqfd$\\

\section{Linearised constraint and KID's operator}

The set $\contrainte = \lbrace (g, \pi) \in \G^{+} \times \K : \phigpi = 0 \rbrace := \phibf^{-1}(\lbrace 0 \rbrace) \subset \F$ is the set of initial data for the vacuum Einstein's equations. To prove that $\contrainte$ is a submanifold of $\F$, we show that $0$ is a regular value of $\phibf$ , so we  {are interested} in the surjectivity of the differential of $\phibf$, also related to the injectivity of its adjoint.
We recall the expression of the linearization of {the constraint operator} $\phibf$ and its formal adjoint, the KID's operator (see \cite{Bartnik2005} or \cite{Fischer1979} for example).
\begin{prop}\label{corestiadjL2}
{The differential of the constraint operator $\phibf$ at $(g,\pi)$ in the direction $(h,p)$ is}
\begin{eqnarray}
D \phiogpi .(h,p) &=& (\n^{i} \n^{j} h_{ij} - \laplag  {\tr}_{g}h) \sqrt{g} - h_{ij} \lbrack R^{ij} - \tfrac{1}{2}  (R(g) - 2 \Lambda) g^{ij} \rbrack \sqrt{g}\nonumber\\
&& + h_{ij} \big(\tfrac{2}{n-1}  {\tr}_{g}\pi \pi^{ij} - 2 \pi^{i}_{\, k} \pi^{kj} + \tfrac{1}{2} \norm{\pi}{g}^{2} \, g^{ij} - \tfrac{1}{2(n-1)} ( {\tr}_{g}\pi)^{2} g^{ij} \big) \slash \sqrt{g} \nonumber\\
&& + p^{ij} (\tfrac{2}{n-1} {\tr}_{g}\pi g_{ij} - 2 \pi_{ij}) \slash \sqrt{g}. \label {def0}\\[.3cm]
D \phiigpi .(h,p) &=& \pi^{jk}(2 \n_{k}h_{ij}- \n_{i} h_{jk}) + 2 h_{ij} \n_{k}\pi^{jk} + 2 g_{ik} \n_{j} p^{jk}. \label{defi}
\end{eqnarray}
\end{prop}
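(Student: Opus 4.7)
The plan is a direct first-variation computation—no new ideas beyond classical identities of Riemannian geometry are needed, but careful book-keeping is required.

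First I would record the standard first-variation formulas, valid for any smooth variation $h=Dg$ of the metric:
\begin{align*}
D g^{ij}\cdot h &= -g^{ia}g^{jb}h_{ab},\qquad D\sqrt{g}\cdot h = \tfrac12(\tr_g h)\sqrt{g},\\
D\Gamma^{k}_{ij}\cdot h &= \tfrac12 g^{kl}(\nabla_i h_{jl}+\nabla_j h_{il}-\nabla_l h_{ij}),\\
DR(g)\cdot h &= -\Delta_g(\tr_g h) + \nabla^{i}\nabla^{j}h_{ij} - R^{ij}h_{ij}.
\end{align*}
These hold pointwise for smooth data; the very same product estimates already invoked in Proposition \ref{phidef} (Lemma \ref{lemmePiotr} and Remark \ref{produitadapte}) allow them to be read as identities in $\L^{*}$ for the Sobolev regularity of $\F$.

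For $D\phio$ I would decompose $\phio=(R(g)-2\Lambda)\sqrt g - Q(g,\pi)/\sqrt{g}$ with $Q=\norm{\pi}{g}^2-\tfrac{1}{n-1}(\tr_g\pi)^2$, and differentiate piece by piece. Combining $DR(g)\cdot h$ with $(R-2\Lambda)\,D\sqrt g\cdot h$ produces the first line of \eqref{def0} together with the term $-h_{ij}[R^{ij}-\tfrac12(R-2\Lambda)g^{ij}]\sqrt g$. The $Q$-piece is polynomial in $g^{-1}$ and $\pi$: varying the two inverse metrics in $\norm{\pi}{g}^2 = g_{ab}g_{cd}\pi^{ac}\pi^{bd}$, the inverse metric in $\tr_g\pi=g_{ab}\pi^{ab}$, and adding the $\tfrac12 Q(\tr_g h)/\sqrt g$ coming from $D(1/\sqrt g)\cdot h$, yields—after relabeling—the remaining $h$-dependent lines of \eqref{def0}. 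The $p$-contribution only touches the two $\pi$'s in $Q$ and reduces to $p^{ij}(\tfrac{2}{n-1}\tr_g\pi\, g_{ij}-2\pi_{ij})/\sqrt g$.

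For $D\phii$ I would start from the form $\phii=2g_{ij}\nabla_k\pi^{jk}$ of \eqref{phiidef}. A small but essential observation is that because $\pi$ is a weight-one tensor density of type $(2,0)$, the two trace-like Christoffel corrections in $\nabla\pi$ cancel, so that in coordinates $\nabla_k\pi^{jk}=\partial_k\pi^{jk}+\Gamma^{j}_{kl}\pi^{lk}$ contains a single $\Gamma$. Three contributions then appear: the variation of the front $g_{ij}$ gives $2h_{ij}\nabla_k\pi^{jk}$; feeding $D\Gamma\cdot h$ into $\Gamma^{j}_{kl}\pi^{lk}$ and contracting with $g_{ij}$ produces $(\nabla_k h_{li}+\nabla_l h_{ki}-\nabla_i h_{kl})\pi^{lk}$, which collapses via the symmetries of $\pi$ and $h$ to $\pi^{jk}(2\nabla_k h_{ij}-\nabla_i h_{jk})$; and the $p$-direction gives $2g_{ij}\nabla_k p^{jk}$, equal by symmetry of $p$ to $2g_{ik}\nabla_j p^{jk}$.

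The main obstacle is purely book-keeping—index positions, symmetrizations of $\pi, p, h$, and the density-weight correction for $\nabla\pi$. Once these are handled, the announced expressions \eqref{def0}--\eqref{defi} drop out and agree with the classical formulas recorded in \cite{Bartnik2005, Fischer1979}.
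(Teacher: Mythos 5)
Your computation is correct, and it is the standard direct first-variation argument. The paper itself does not prove this proposition — it simply records the formulas and cites \cite{Bartnik2005} and \cite{Fischer1979} — so you are supplying the calculation that the paper delegates to those references, and you do it by the same route they take (varying $R(g)$, $\sqrt{g}$, $g^{-1}$, and $\Gamma$ piece by piece). The identities you invoke ($Dg^{ij}$, $D\sqrt{g}$, $D\Gamma$, $DR(g)$) are all right, the sign of $D(1/\sqrt{g})$ is handled correctly, and the observation that the two trace Christoffel terms in $\nabla_k\pi^{jk}$ cancel because $\pi$ is a weight-one density is exactly the point needed to recover the form \eqref{defi}; the justification that these pointwise identities persist for $(g,\pi)\in\F$ via Lemma \ref{lemmePiotr} and Remark \ref{produitadapte} is also the right thing to say at this regularity.
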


Using notations of \cite{Bartnik2005} ,  we define
\begin{eqnarray*}
\delta_{g}\delta_{g} h &=& \n^{i}\n^{j} h_{ij},\\
E^{ij} &=& R^{ij} - \tfrac{1}{2}  (R(g) - 2 \Lambda) g^{ij},\\
\Pi^{ij} &=& \big(\tfrac{2}{n-1}  {\tr}_{g}\pi \pi^{ij} - 2 \pi^{i}_{\, k} \pi^{kj} + \tfrac{1}{2} \norm{\pi}{g}^{2} \, g^{ij} - \tfrac{1}{2(n-1)} ( {\tr}_{g}\pi)^{2} g^{ij} \big) \slash (\sqrt{g})^{2}.
\end{eqnarray*}
%\begin{comment}
We can express $D \phibf$ in the  following form
\begin{equation}
D\phigpi.(h,p) =
\left[
\begin{array}{cc}
\sqrt{g}(\delta_{g}\delta_{g} - \laplag  {\tr}_{g} + \Pi - E) & -2 K\\
\hat{\pi}\n + 2 \delta_{g} \pi & 2 \delta_{g}
\end{array}
\right]
\left[
\begin{array}{c}
 h\\
p
\end{array}
\right] \, ,
\end{equation}
with $\; \hat{\pi} \n h = \hat{\pi}_{i}^{jkl} \n_{j} h_{kl} = (\pi^{jk} \delta_{i}^{l} + \pi^{jl} \delta_{i}^{k} - \pi^{kl} \delta_{i}^{j})\n_{j} h_{kl}$.\\[.5cm]
%\end{comment}
\begin{comment}
\begin{eqnarray*}
D \phiogpi .(h,p) &=& (\n^{i} \n^{j} h_{ij} - \laplag  {\tr}_{g}h) \sqrt{g} - h_{ij} E^{ij} \sqrt{g}\\
&& + h_{ij} S^{ij} \sqrt{g} + p^{ij} ( {\tr}_{g}\pi g_{ij} - 2 \pi_{ij}) \slash \sqrt{g}\\[.3cm]
D \phiigpi .(h,p) &=& \pi^{jk}(2 \n_{j}h_{ik}- \n_{i} h_{jk}) + 2 h_{ij} \n_{k}\pi^{jk} + 2 g_{ik} \n_{j} p^{jk}
\end{eqnarray*}
\end{comment}
To prove surjectivity of the differential of $\phibf$ , we study the injectivity of the adjoint operator. Integrating by parts and ignoring boundary terms leads (cf. \cite{Fischer1979} for example) to the expression of the formal $L^{2}(d \mu (\gz))$-adjoint of $D \phigpi$, also called the KID's operator:
\begin{equation*}
\int_{\m} D \phigpi .(h,p) \, (N,X) = \int_{\m} (h,p) \bullet D \phigpis (N,X).
\end{equation*}
The detail of the product is given by the following equalities
\begin{eqnarray*}\label{adjphiocg}
(h,p) \bullet D \phiogpis N &=& h_{ij} \lbrack \n^{i} \n^{j} N - g^{ij}\laplag N - \lbrack R^{ij} - \tfrac{1}{2}  (R(g) - 2 \Lambda) g^{ij} \rbrack N \rbrack \sqrt{g}\\
&&+ N h_{ij} \big(\tfrac{2}{n-1}  {\tr}_{g}\pi \pi^{ij} - 2 \pi^{i}_{\, k} \pi^{kj} + \tfrac{1}{2} \norm{\pi}{g}^{2} \, g^{ij} - \tfrac{1}{2(n-1)} ( {\tr}_{g}\pi)^{2} g^{ij} \big) \slash \sqrt{g}\\
&& +N p^{ij} (\tfrac{2}{n-1}  {\tr}_{g}\pi g_{ij} - 2 \pi_{ij}) \slash \sqrt{g}.\\
(h,p) \bullet D \phiigpis X^{i} &=& h_{ij}(X^{k} \n_{k} \pi^{ij} +   \n_{k} X^{k} \pi^{ij} - 2  \n_{k}  X^{(i} \pi^{j)k}) -2 p^{ij} \n_{(i} X_{j)}.
\end{eqnarray*}

Then we can put the KID's operator  $D \phibf^*$ in the matrix  form
\begin{equation}\label{dphis}
D\phigpis.(N,X) =
\left[
\begin{array}{cc}
\sqrt{g}(\n^{2} - g \laplag + \Pi - E) & \n \pi - \hat{\pi}\n\\
 -2 K & - \L_{g}
\end{array}
\right]
\left[
\begin{array}{c}
 N\\
X
\end{array}
\right] \, ,
\end{equation}
with
\begin{eqnarray*}
(\n \pi - \hat{\pi}\n)X &=& \L_{X}\pi = \n_{X}\pi^{ij} - \hat{\pi}_{l}^{kij} \n_{k} X^{l},\\
\L_{g}(X)&=& \L_{X}g = 2 \, \n_{(i}X_{j)} = 2 \, S(X).
\end{eqnarray*}
$D\phigpis_{1}.\xi$ and $D\phigpis_{2}.\xi \;$  {will denote} the two components of $D\phigpis$ in (\ref{dphis}).\\
 We will denote by $\sob{k}{2}{}  \xi$ any terms of the form $u \, \xi$ such that $\Norm{u}{k,2} \infeg C$, where $C$ is a constant depending on $\gz$ and $\Norm{(g,\pi)}{\F}$.
We have
\begin{eqnarray}
D\phigpis_{1}.\xi &=& \lbrack \n_{i} \n_{j} N - g_{ij}\laplag N + (\Pi_{ij} - E_{ij}) \rbrack N \rbrack \sqrt{g} + (\n \pi - \hat{\pi}\n)X \nonumber\\
&=& D \phibf (g,0)^{*} \, (N,0) + \Pi_{ij} N \sqrt{g} + (\n \pi - \hat{\pi}\n)X \label{expdphi1dphi0} ,\\[.3cm]
(\n \pi - \hat{\pi}\n)X&=&X^{k}\n_{k}\pi_{ij} - (\pi^{k}_{\, i} \delta_{lj} + \pi^{k}_{\, j} \delta_{li} - \pi_{ij} \delta_{l}^{k}) \n_{k} X^{l} \nonumber\\
%&=& X \nz \pi + XA \pi - (\pi^{k}_{\, i} \delta_{lj} + \pi^{k}_{\, j} \delta_{li} - \pi_{ij} \delta_{l}^{k}) \nz_{k} X^{l}\\
%&=& X \nz (\pi-\piz) + XA (\pi - \piz) + XA \piz +  (\pi - \piz) \nz X - (\piz^{k}_{\, i} \delta_{lj} + \piz^{k}_{\, j} \delta_{li} - \piz_{ij} \delta_{l}^{k}) \n_{k} X^{l}\\
&=& \sob{k}{2}{} X +  \sob{k+1}{2}{} \nz X +(n-1) \tau (2 \Sz (X) - \gz \, {\tr}_{\gz}\Sz(X)), \nonumber\\[.3cm]
\Pi(g, \pi) N &=&\sob{k}{2}{} N + \Pi(\gz, \piz) N \nonumber\\
%&=& \sob{k}{2}{} N + \big(\tfrac{2}{n-1}  {\tr}_{\gz}\piz \piz^{ij} - 2 \piz^{i}_{\, k} \piz^{kj} + \tfrac{1}{2} \norm{\piz}{\gz}^{2} \, \gz^{ij} - \tfrac{1}{2(n-1)} ( {\tr}_{\gz}\piz)^{2} \gz^{ij} \big) N\slash (\sqrt{\gz})^{2}\\
&=& \sob{k}{2}{} N  - \tdemi(n-1)(n-4)\tau^{2} \gz N.\nonumber
\end{eqnarray}
If $(g,\pi)\in \F$ we have
\begin{equation}\label{integPi}
\Pi(g, \pi) -\Pi(\gz, \piz) =\Pi(g, \pi) + \tdemi(n-1)(n-4)\tau^{2} \gz \in \sob{k}{2}{},
\end{equation}
and 
\begin{equation}\label{integE}
E -k (n-1) \gz -\tdemi n(n-1)\tau^{2} \gz \in \sob{k}{2}{},
\end{equation}
(this last quantity being equal to $E-\mathring E$ if $Ric(\gz)=k(n-1)\gz$).

On one hand, we find
\begin{eqnarray}
D\phigpis_{1}.\xi \slash \sqrt{g} &=& \n^{2} N - g\laplag N -k (n-1) \gz {N} +\lbrack \Pi + \tdemi(n-1)(n-4)\tau^{2} \gz \rbrack N \nonumber\\
&& +(n-1) \tau (2 \Sz (X) - \gz \, {\tr}_{\gz}\Sz(X))  - \lbrack E-k (n-1) \gz -\tdemi n(n-1)\tau^{2} \gz \rbrack N \rbrack \nonumber\\
&& - (n-1)(n-2)\tau^{2} \gz N + \sob{k}{2}{} \xi +  \sob{k+1}{2}{} \nz X \nonumber\\
&=& \n^{2} N - g\laplag N -k(n-1) {N}+ (n-1) \tau (2 \Sz (X) - \gz\,  {\tr}_{\gz}\Sz(X)) \nonumber\\
&& - (n-1)(n-2)\tau^{2} \gz N + \sob{k}{2}{} \xi +  \sob{k+1}{2}{} \nz X. \label{expdphi1}
\end{eqnarray}
On the other hand, we can write
\begin{eqnarray*}
D\phigpis_{2}.\xi &=& -2 KN - 2 S(X)\\
%&=& -2 \Kz N - 2 \Sz (X) -2 (K-\Kz) N + 2 A X\\
&=& - 2 (\Sz (X) + \tau \gz N) + \sob{k+1}{2}{} \xi.
\end{eqnarray*}
From the definition of the operator $T = \n^{2}N -Ng$ and the expression of $\Sz$ related to $D\phigpis_{2}.\xi$, we obtain
\begin{eqnarray}\label{dphis12}
D\phigpis_{1}.\xi \slash \sqrt{g}&=& T- g\, {\tr}_{g}T + (n-1) \tau (2 \Sz (X) - \gz\,  {\tr}_{\gz}\Sz(X))\nonumber\\
&& - (n-1)(n-2)\tau^{2} \gz N + \sob{k}{2}{} \xi +  \sob{k+1}{2}{} \nz X. \label{dphis1}\\
D\phigpis_{2}.\xi &=& - 2 (\Sz (X) + \tau \gz N) + \sob{k+1}{2}{} \xi. \label{dphis2}
\end{eqnarray}

It is useful to restructure $D\phibfs$ into the operator $\Ps$ defined by
\begin{eqnarray}\label{Ps}
\Ps(\xi) = \Pgpis(\xi) &=&
\left[
\begin{array}{c}
g^{1 \slash 4}\left(\n^{i}\n_{j}N - \delta^{i}_{\, j} \laplag N + (\Pi^{i}_{\, j} - E^{i}_{\, j})N\right) + g^{-1\slash 4}\L_{X}\pi^{i}_{\, j}\\
 - 2 g^{-1 \slash 4} \n_{l}( K^{i}_{\, j} N +  S(X)^{i}_{\, j})
\end{array}
\right]
 \nonumber \\
&=& \zeta \circ
\left[
\begin{array}{cc}
1 & 0\\
0 & \n
\end{array}
\right]
\circ D\phigpis \xi \, ,
\end{eqnarray}
where $g^{1 \slash 4} = (det(g) \slash det(\gz))^{1 \slash 4} \, d\mu(\gz)$ is a density of weight $\tdemi$ and
\begin{equation}\label{defzeta}
\zeta = \zeta(g) =
\left[
\begin{array}{cc}
g^{-1 \slash 4} g_{jk} & 0\\
0 & g^{1 \slash 4} g^{ik}
\end{array}
\right].
\end{equation}
Finally, we can put $\Pgpis(\xi)$ into the form
\begin{equation}\label{defPs}
\Pgpis(\xi) =
\left(
\begin{array}{c}
g^{-1 \slash 4} \, D\phigpis_{1}.\xi  \\
 g^{1 \slash 4} \, \n D\phigpis_{2}.\xi 
\end{array}
\right).
\end{equation}
Expression (\ref{Ps}) of $\Ps$ allows us to rewrite the $L^{2}(d \mu (\gz))$-adjoint of $\Ps$ as follows
\begin{equation}\label{defP}
\Pgpi = D\phigpi \circ 
\left[
\begin{array}{cc}
1 & 0\\
0 & - \delta_{g}
\end{array}
\right]
\circ \zeta \, ,
\end{equation}
with $\delta_{g}q = \n^{l}(q_{l}^{ij})$ so that $P(f^{i}_{\, j},q_{li}^{\;j}) = D\phibf(f_{ij},q_{l}^{ij})$ and so the composition $P \Ps$ is well defined.\\
%La proposition suivante est l'analogue en asymptotiquement hyperbolique de la Proposition 3.3 de \cite{Bartnik2005}.

\section{Elliptic estimates for the KID's operator}
In this section, we gather elliptic estimates satisfied by the adjoint operator $D \phibfs$.
We start with :
\begin{prop}{}\label{prop3.3g}
 If $k+2>n/2$, there exists a positive constant  
$C = C \, (\gz, \lambda , \Norm{g}{\F})$ such that the following elliptic estimate hold:
 $\forall \xi \in \sob{k+2}{2}{} (\T)$ ,
\begin{equation}\label{35cg}
\Norm{\xi}{k+2,2} \infeg c \, \big(\Norm{D \phigpis_{1}.\xi }{k,2} + \Norm{D \phigpis_{2}.\xi }{k+1,2} \big) + C \, \Norm{\xi}{0,2} \, ,
\end{equation}
\end{prop}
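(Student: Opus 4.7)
The proof rests on the explicit decomposition (\ref{dphis1})--(\ref{dphis2}) of $D\phigpis$, in which the principal part isolates the Hessian-type operator $T$ acting on the lapse $N$ and the Killing operator $\Sz$ acting on the shift $X$. The two variables are coupled only through zeroth-order $\gz N$ terms. The plan is first to decouple them by using (\ref{dphis2}) to substitute $\Sz(X)$ into (\ref{dphis1}), then to apply the shifted-Hessian estimate (Lemma \ref{TmajoreN}) to $N$ and the Korn inequality (Lemma \ref{SmajoreX}) to $X$, and finally to absorb the remaining lower-order contributions using Lemma \ref{Ehrlingadapte} and Remark \ref{produitadapte}.

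The key algebraic step is the following. Solving (\ref{dphis2}) for the shift gives $\Sz(X)=-\tfrac{1}{2}D\phigpis_{2}.\xi-\tau\gz N+\sob{k+1}{2}{}\xi$, hence
\[
2\Sz(X)-\gz\tr_{\gz}\Sz(X)=(n-2)\tau\,\gz N-D\phigpis_{2}.\xi+\tfrac12\gz\tr D\phigpis_{2}.\xi+\sob{k+1}{2}{}\xi.
\]
Substituted into (\ref{dphis1}), the resulting $(n-1)(n-2)\tau^{2}\gz N$ contribution cancels exactly the explicit $-(n-1)(n-2)\tau^{2}\gz N$ term. This yields
\[
L(N):=T(N)-g\tr_{g}T(N)=\sqrt g^{\,-1}\,D\phigpis_{1}.\xi+\alpha_{1}D\phigpis_{2}.\xi+\alpha_{2}\gz\tr D\phigpis_{2}.\xi+\sob{k}{2}{}\xi+\sob{k+1}{2}{}\nz X,
\]
with explicit constants $\alpha_{i}=\alpha_{i}(n,\tau)$. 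Since $\tr L=-(n-1)\tr T$, the fiberwise linear map $T\mapsto L$ is pointwise invertible, so $\Norm{T(N)}{k,2}\leq C\,\Norm{L(N)}{k,2}$, and Lemma \ref{TmajoreN} then delivers
\[
\Norm{N}{k+2,2}\leq C\bigl(\Norm{D\phigpis_{1}.\xi}{k,2}+\Norm{D\phigpis_{2}.\xi}{k,2}+(\text{l.o.t.})+\Norm{N}{0,2}\bigr).
\]

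Once $\Norm{N}{k+2,2}$ is controlled, (\ref{dphis2}) gives $\Norm{\Sz(X)}{k+1,2}\leq C\bigl(\Norm{D\phigpis_{2}.\xi}{k+1,2}+\Norm{N}{k+1,2}+(\text{l.o.t.})\bigr)$, and Lemma \ref{SmajoreX} furnishes the corresponding bound on $\Norm{X}{k+2,2}$. The remaining lower-order terms, of the form $\sob{k}{2}{}\xi$ and $\sob{k+1}{2}{}\nz X$, are then estimated---under the sharp hypothesis $k+2>n/2$---by combining the product inequalities of Remark \ref{produitadapte} with Lemma \ref{Ehrlingadapte}: each is dominated by $\epsilon\,\Norm{\xi}{k+2,2}+c(\epsilon)\Norm{\xi}{0,2}$. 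Choosing $\epsilon$ small enough absorbs these contributions into the left-hand side and yields (\ref{35cg}). The main obstacle is precisely this low-regularity book-keeping: the condition $k+2>n/2$ is exactly what the multiplication inequalities require, and without the structural cancellation of the $\gz N$ coefficient one would be left with an un-absorbable $\Norm{N}{k+1,2}$ contribution coming from the Korn step on $X$.
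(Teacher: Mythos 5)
Your proposal reproduces the paper's argument essentially step by step: solve (\ref{dphis2}) for $\Sz(X)$, substitute into (\ref{dphis1}) so the explicit $(n-1)(n-2)\tau^2\gz N$ terms cancel and $T-g\tr_g T$ becomes an explicit combination of $D\phigpis_{1}.\xi$, $D\phigpis_{2}.\xi$, $\gz\tr D\phigpis_{2}.\xi$, plus $\sob{k}{2}{}\xi$ and $\sob{k+1}{2}{}\nz X$; invert the pointwise map $T\mapsto T-g\tr_gT$ (using $\tr_gL=-(n-1)\tr_gT$) and apply Lemma \ref{TmajoreN} to get the $N$-estimate; then re-use (\ref{dphis2}) plus Lemma \ref{SmajoreX} for $X$; finally absorb the $\sob{k}{2}{}\xi$, $\sob{k+1}{2}{}\nz X$ pieces and the $\Norm{N}{k+1,2}$ remainder from the Korn step by Lemma \ref{Ehrlingadapte} / Remark \ref{produitadapte} and a weighted combination of the two estimates with a small parameter. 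This is precisely the paper's decoupling strategy, down to the choice of which lemma controls which component.

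One small point on your closing comment. You assert that without the cancellation of the $\gz N$ coefficient one would face an ``un-absorbable'' $\Norm{N}{k+1,2}$ term. In fact that residual would be a zeroth-order term $c\,\gz N$ with smooth (constant) coefficient, hence a $\sob{k}{2}{}\xi$-type contribution, and $\Norm{N}{k,2}$ is absorbable by Ehrling exactly like the other lower-order pieces. The cancellation is a structural convenience---and essential for identifying the correct asymptotic Obata operator $\mathring{\mathcal T}$ in the non-compact extension of Appendix \ref{noncomp}---but it is not what makes the compact estimate close. The genuinely unavoidable coupling is the $n|\tau|\Norm{N}{k+1,2}$ term produced by the Korn step (cf.\ the paper's intermediate estimate for $\Sz(X)$), and that is handled, as you otherwise correctly describe, by adding an $\epsilon_0$-multiple of the $X$-estimate to the $N$-estimate. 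Apart from this interpretive overstatement, the proof is correct and mirrors the paper's.
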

\textbf{Proof}: Considering expression (\ref{dphis2}) of $\Sz$ as a function of $D\phigpis_{2}.\xi$,
\begin{eqnarray}\label{T-gtrT}
T- g\, {\tr}_{g}T &=& D\phigpis_{1}.\xi \slash \sqrt{g} + (n-1)\tau \big(D\phigpis_{2}.\xi - \tdemi \gz \, {\tr}_{\gz} D\phigpis_{2}.\xi \big) \nonumber\\
&& + \sob{k}{2}{} \xi +  \sob{k+1}{2}{} \nz X.
\end{eqnarray}
From lemma \ref{TmajoreN}, equation (\ref{h1}), and the  remark  \ref{produitadapte} we have
\begin{eqnarray*}
\Norm{N}{k+2,2} &\infeg& c \, \big( \Norm{D\phigpis_{1}.\xi}{k,2} +  \, (n-1) |\tau| \, (1+ \tfrac{n^{2}}{4})^{\demi} \Norm{D\phigpis_{2}.\xi}{k,2} \big)\\
&& + C \, (\Norm{\xi}{k,p} + \Norm{\nz \xi}{k,q}+\Norm{N}{0,2}).
\end{eqnarray*}
Using (\ref{uinfdk}), (\ref{u3dk}) and Sobolev embedding, there exists a positive constant\\  
$C = C \, (\gz, \lambda , \Norm{g}{\F})$ such that
\begin{eqnarray}\label{estimeeN2}
\Norm{N}{k+2,2} &\infeg& c \, \big( \Norm{D\phigpis_{1}.\xi}{k,2} +  \, (n-1) |\tau| \, (1+ \tfrac{n^{2}}{4})^{\demi} \Norm{D\phigpis_{2}.\xi}{k,2} \big) \nonumber\\
&& + \epsilon \; \Norm{\xi}{k+2,2} + C \, \Norm{\xi}{0,2}.
\end{eqnarray}

Now from (\ref{dphis2}) and (\ref{uinfdk}), we get the estimate
\begin{equation}\label{estsx}
\Norm{\Sz (X)}{k+1,2} \infeg \tfrac{1}{4}\Norm{D\phigpis_{2}.\xi}{k+1,2} + n |\tau| \, \Norm{N}{k+1,2} + \epsilon \Norm{\xi}{k+2,2} + C \, \Norm{\xi}{0,2}.
\end{equation}
Consequently, using the lemma \ref{SmajoreX} there exists a constant $C$ depending on $\gz, \lambda, \epsilon$ and $\Norm{(g,\pi)}{\F}$ such that
\begin{equation}\label{estimeeX2}
\Norm{X}{k+2,2} -  n c_{1} \, |\tau| \, \Norm{N}{k+1,2} \infeg \tfrac{c_{1}}{4} \, \Norm{D\phigpis_{2}.\xi}{k+1,2} + \epsilon \Norm{\xi}{k+2,2} + C \, \Norm{\xi}{0,2}.
\end{equation}
We can choose a small positive constant $\epsilon_{0}$ so that $(\ref{estimeeN2}) + \epsilon_{0} (\ref{estimeeX2})$ implies  (\ref{35cg}).$\cqfd$\\[.4cm]

As a preliminary result, we shall establish a lemma corresponding to  the Time-symmetric version of proposition \ref{proplipcg} given later.

\begin{lem}{} Let $k\in\mathbb N$ such that $k+2>\frac n2$.
The operator $$D \phibf (g,0)^{*} \, (.,0) : \sob{k+2}{2}{}(\m) \longrightarrow \sob{k}{2}{} (\w{\S}) \;$$ is bounded and depends on $g$ in a Lipschitz way,
\begin{equation}\label{lip}
\bNorm{\big \lbrack D \phibf (g,0)^{*} - D \phibf (\wg,0)^{*} \big \rbrack \, (N,0)}{k,2} \infeg C \Norm{g - \tilde{g}}{\F} \; \Norm{N}{k+2,2} \, ,
\end{equation}
where the constant $C$ depends on $\gz  , \Norm{g}{\F}$ and $\Norm{\tilde{g}}{\F}$.
\end{lem}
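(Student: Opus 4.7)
The plan is to exploit the explicit formula for $D\phibf(g,0)^*(N,0)$ given in the excerpt. Since $\pi=0$ forces $K=0$, the second component $-2KN - \L_g X$ (at $X=0$) vanishes identically, so the proof reduces to estimating the symmetric $(2,0)$-tensor density
\[
E(g,N) := \bigl[\n^2 N - g\,\laplag N - \Ein(g) N\bigr]\sqrt{g}, \qquad \Ein(g) := \Ric(g) - \tfrac12(R(g)-2\Lambda) g,
\]
in $\sob{k}{2}{}(\w{\S})$, where $\n^2$ and $\laplag$ are expressed through $\nz$ and the difference tensor $A=A(g)$ via (\ref{defA})--(\ref{A}).

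For the \emph{boundedness} statement, I would expand $\n^2 N = \nz^2 N + A*\nz N$ and $\laplag N = g^{-1}*\nz^2 N + g^{-1}*A*\nz N$ schematically. The ingredients are then: $g-\gz, g^{-1}-\gz^{-1}, \sqrt g - 1 \in \sob{k+2}{2}{}$ (smooth dependence on $g$ by the argument of the last proposition of Section~\ref{sectionopecontrainte}); $A\simeq g^{-1}*\nz g\in \sob{k+1}{2}{}$; and, by (\ref{integdifric})--(\ref{integdifscal}) and Lemma~\ref{lemmePiotr}, $\Ric(g),R(g)\in H^k$. All products collapse in $H^k$ using Lemma~\ref{lemmePiotr} and the multiplicative estimate (\ref{19gk}), which requires exactly $k+2>n/2$; this yields
\[
\Norm{E(g,N)}{k,2}\leq C\bigl(\Norm{\nz^2 N}{k,2}+\Norm{\nz N}{k,2}+\Norm{N}{k,2}\bigr)\leq C\,\Norm{N}{k+2,2}.
\]

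For the \emph{Lipschitz} estimate, I would write
\[
D\phibf(g,0)^*(N,0)-D\phibf(\wg,0)^*(N,0)
= \bigl(E(g,N)-E(\wg,N)\bigr)
\]
and telescope term-by-term: differences of $g$, $g^{-1}$, $\sqrt g$ are linear in $g-\wg$ with smooth-in-$(g,\wg)$ coefficients in $\sob{k+2}{2}{}$; the difference $\n^2 N-\wn^2 N = (A(g)-A(\wg))*\nz N$ is linear in $\nz(g-\wg)$; and, using (\ref{riccicov}) and (\ref{courbscalcov}), $\Ric(g)-\Ric(\wg)$ and $R(g)-R(\wg)$ admit an algebraic expansion of the schematic form
\[
P_1(g,\wg,\nz g,\nz\wg)*\nz^2(g-\wg) + P_2(g,\wg,\nz g,\nz\wg)*\nz(g-\wg),
\]
where $P_1,P_2$ are polynomial in their arguments. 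Each factor $P_i$ lies in a space on which Lemma~\ref{lemmePiotr} applies, so every term can be absorbed into $\Norm{g-\wg}{\F}\Norm{N}{k+2,2}$ by the same product estimates as in step one.

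The main obstacle is the Lipschitz control of the curvature pieces $\Ric(g)N$ and $R(g) g N$, because $\Ric$ and $R$ only live in $H^k$ and depend nonlinearly on $\nz^2 g$. It is resolved precisely by the algebraic formula (\ref{courbscalcov})--(\ref{riccicov}), which makes the differences \emph{linear} in $\nz^2(g-\wg)$ and $\nz(g-\wg)$ with coefficients that are polynomial in $g,\wg,\nz g,\nz\wg$; together with the bilinear estimates of Lemma~\ref{lemmePiotr} and Remark~\ref{produitadapte} (valid because $k+2>n/2$), this delivers the claimed constant $C=C(\gz,\Norm{g}{\F},\Norm{\wg}{\F})$.
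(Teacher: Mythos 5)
Your overall plan mirrors the paper's: for boundedness you expand $\n^2N$, $\laplag N$, $\Ric(g)$ and $R(g)$ relative to $\nz$ and $A=A(g)$, then apply Lemma~\ref{lemmePiotr} and (\ref{19gk}); for the Lipschitz bound you telescope the difference $D\phibf(g,0)^*-D\phibf(\wg,0)^*$ and estimate each piece with the same product estimates. That is exactly the structure of the paper's argument, which also first factors out $\sqrt{g}$, writes $\n-\wn=(g^{-1}-\wg^{-1})\nz g+\wg^{-1}\nz(g-\wg)$ for the Hessian/Laplacian pieces, and then treats the curvature pieces.

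There is, however, a genuine error in the key step. You claim that $\Ric(g)-\Ric(\wg)$ (and likewise $R(g)-R(\wg)$) is \emph{linear} in $\nz^2(g-\wg)$ and $\nz(g-\wg)$ with coefficients $P_1,P_2$ polynomial in $g,\wg,\nz g,\nz\wg$ only. That is not true. From (\ref{riccicov}), after subtracting, the term $\nz A(g)-\nz A(\wg)$ contains, schematically, $g^{-1}\nz^2 g-\wg^{-1}\nz^2\wg=(g^{-1}-\wg^{-1})\nz^2 g+\wg^{-1}\nz^2(g-\wg)$, so there is necessarily a contribution $(g^{-1}-\wg^{-1})\nz^2 g$ in which $\nz^2 g$ (which only lives in $H^k$) appears as a coefficient of a zeroth-order difference, not of $\nz^2(g-\wg)$ or $\nz(g-\wg)$. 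Your stated schematic form omits this, and an argument built literally on it would overlook those terms. The paper avoids the issue by introducing $\w T:=\wn-\n$ and the clean decomposition $\Ric(g)-\Ric(\wg)\simeq\nz\w T+\w A\,\w T+\w T^2$ together with the single estimate $\Norm{\w T}{k+1,2}\le C\Norm{g-\wg}{k+2,2}$; the troublesome second-derivative coefficients are then packaged inside $\nz\w T$, whose $H^k$ norm is controlled by $\Norm{g-\wg}{\F}$ in one stroke. Your gap is repairable with the same tools (indeed $(g^{-1}-\wg^{-1})\in H^{k+2}$, $\nz^2 g\in H^k$, $N\in H^{k+2}$, and the triple product lies in $H^k$ with the right bound by Remark~\ref{produitadapte}), but as written the claimed algebraic structure is incorrect and the affected terms are unaccounted for.
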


\textbf{Proof}: Let us recall the statement of $D \phibf (g,0)^{*}$ :
\begin{equation}\label{adjphio}
D \phibf (g,0)^{*}. (N,0) = \lbrack \n_{i} \n_{j} N - g_{ij}\laplag N - \lbrack R_{ij} - \tfrac{1}{2}  (R(g) - 2 \Lambda) g_{ij} \rbrack N \rbrack \sqrt{g}.
\end{equation}
We begin by showing $D \phibf (g,0)^{*} \;$ is bounded.
Let us define the operator acting on functions
\begin{equation}\label{defO} 
O(N)= \n^{2}N - g \, \laplag N
\end{equation}
and note that $O(N)= L (\n^{2}N)$ where $L$ is a linear invertible operator. Thus
\begin{eqnarray*}
\Norm{O(N)}{k,2} &\infeg& c \, \Norm{\n^{2}N}{k,2} \infeg  c \, \left(\Norm{\nz^{2}N}{k,2} + \Norm{A\, d N}{k,2} \right)\\
& \infeg & C \, \Norm{N}{k+2,2},
\end{eqnarray*}
indeed, $A\, d N \simeq g^{-1} \nz g \; d N$ and using H\"older inequality (\ref{h1}) , $(\ref{19gk})$ and Sobolev inclusion,
\begin{eqnarray}\label{adnbis}
\Norm{A\, d N}{k,2} & \infeg& \Norm{g^{-1}}{k+2,2} \Norm{\nz g \, d N }{k,2} \nonumber\\
& \infeg& c \, \Norm{\nz g}{k+1,2} \Norm{d N}{k+1,2} \nonumber\\
& \infeg& C \, \Norm{N}{k+2,2}.
\end{eqnarray}
We go on with
\begin{eqnarray*}\label{dphiog}
\Norm{ D \phibf (g,0)^{*}. (N,0) \diagup \sqrt{g}}{k,2} &\infeg& \Norm{O(N)}{k,2} +  \Norm{\left(Ric \, g - Ric \, \gz \right) N}{k,2} + \Norm{(n-1) \gz N}{k,2} \nonumber\\
&& + \Norm{\left\lbrack Ric \, \gz + (n-1) \gz \right\rbrack N}{k,2} + \Norm{\tdemi n(n-1)\tau^{2}\,g \, N}{k,2}\\
&& + \tdemi \bNorm{\left\lbrack R(g) - 2 \Lambda + n(n-1)\tau^{2} \right \rbrack\,g \, N}{k,2}.
\end{eqnarray*}
Considering (\ref{integdifric}), (\ref{integriccg}),  we have
\begin{eqnarray*}
\Norm{(Ric \, g - Ric \, \gz) N}{k,2} &\infeg& C \,\Norm{N}{k+2,2}\\
\Norm{\lbrack Ric \, \gz + (n-1) \gz \rbrack N}{k,2}  &\infeg& c \, \Norm{N}{k+2,2}.
\end{eqnarray*}
For the scalar curvature term, using the lemma \ref{lemmePiotr} together with (\ref{integscal}) and (\ref{integdifscal}),
\begin{eqnarray*}
\Norm{(R(g) - 2 \Lambda + n(n-1)\tau^{2} )\,g\, N}{k,2} &\infeg& \Norm{(R(\gz) - 2 \Lambda + n(n-1)\tau^{2} )\,g\, N}{k,2}\\
&&+ \Norm{(R(g) - R(\gz) )\,g\, N}{k,2}\\
%&\infeg& \Norm{g}{\infty, 0} \bNorm{\left \lbrack R(g) - 2 \Lambda + n(n-1)\tau^{2} \right \rbrack N}{2}\\
%&\infeg& c \, \Norm{R(g) - 2 \Lambda + n(n-1)\tau^{2} }{2} \Norm{N}{\infty}\\
&\infeg& C \, \Norm{N}{k+2,2}.
\end{eqnarray*}
Now we have of course
$\Norm{(n-1) \gz N}{k,2} \infeg c \, \Norm{N}{k+2,2},$
and similarly,
$\Norm{\tdemi n(n-1)\tau^{2} g N}{k,2} \infeg c \, \Norm{N}{k+2,2}.$
We end up with
\begin{equation*}%\label{majdphiosqrtg}
\Norm{ D \phibf (g,0)^{*}. (N,0) \diagup \sqrt{g}}{k,2} \infeg C \, \Norm{N}{k+2,2},
\end{equation*}
and finally using again the lemma \ref{lemmePiotr}
\begin{equation}\label{majdphio}
\Norm{D \phibf (g,0)^{*}. (N,0)}{k,2} \infeg C \, \Norm{\sqrt{g}}{k+2,2} \Norm{N}{k+2,2} \infeg C \, \Norm{N}{2,2} ,
\end{equation}
where $C$ is a constant depending upon $\gz $ and $\Norm{g}{\F}$. The boundedness of the map is then proved.  

We now proceed to the  proof of equation (\ref{lip}). Let us denote respectively by
$\wn \,, \wlapla \, , Ric (\wg)$ and $R(\wg)$ the Levi-Civita connection, the Laplacian, the Ricci tensor and the scalar curvature of the Riemannian metric $\wg$.
In order to lighten notations, we also set $$D\phiogs N := D \phibf (g,0)^{*} \, (N,0) \; \; \mbox{and} \; \; D \phio (\wg)^{*} N := D \phibf (\wg,0)^{*} \, (N,0).$$
We split
\begin{equation*}
\lbrack D\phiogs - D \phio (\wg)^{*}\rbrack N = (\sqrt{g} - \sqrt{\w{g}}) \, \frac{D\phiogs N}{\sqrt{g}} + \sqrt{\w{g}} \left\lbrack \frac{D\phiogs N}{\sqrt{g}} - \frac{D\phio (\wg)^{*} N}{\sqrt{\wg}} \right\rbrack.
\end{equation*}
It directly implies
\begin{eqnarray}\label{declip}
\bNorm{\lbrack D\phiogs - D\phio (\tilde{g})^{*}\rbrack N}{k,2} 
%&\infeg& \Norm{\sqrt{g} - \sqrt{\w{g}}}{\infty} \BNorm{\frac{D\phiogs N}{\sqrt{g}}}{2} \nonumber\\
%&&+ \Norm{\sqrt{\w{g}}}{\infty,0} \BNorm{\Big( \frac{D\phiogs N}{\sqrt{g}} - \frac{D\phio (\wg)^{*} N}{\sqrt{\wg}}\Big) }{2}\nonumber\\
&\infeg& \Norm{g -\w{g}}{\F} \BNorm{\frac{D\phiogs N}{\sqrt{g}}}{k,2} \nonumber\\
&&+ c \, \BNorm{\frac{D\phiogs N}{\sqrt{g}} - \frac{D\phio (\wg)^{*} N}{\sqrt{\wg}}}{k,2}.
\end{eqnarray}
Now, because
\begin{eqnarray*}
\Big( \frac{D\phiogs N}{\sqrt{g}} - \frac{D\phio (\wg)^{*} N}{\sqrt{\wg}}\Big) &=& (\n - \wn) \, d N + g\, \laplag N - \wg \, \wlapla N - \lbrack Ric (g) - Ric (\wg)\rbrack N \\
&&+ \tdemi \left\lbrack (R(g) - 2\Lambda)g - (R(\wg) - 2\Lambda) \wg \right\rbrack N,
\end{eqnarray*}
we have
\begin{eqnarray*}
\BNorm{\frac{D\phiogs N}{\sqrt{g}} - \frac{D\phio (\wg)^{*} N}{\sqrt{\wg}} }{k,2} &\infeg& \Norm{(\n - \wn) \, d N}{k,2} + \Norm{g \, \laplag N - \wg \, \wlapla N}{k,2}\\
&& + \tdemi \bNorm{\left\lbrack (R(g) - 2\Lambda) g - (R(\wg) - 2\Lambda) \wg \right\rbrack N}{k,2}\\
&& - \Norm{\lbrack Ric (g) - Ric (\wg)\rbrack N}{k,2}.
\end{eqnarray*}
We will estimate each  terms of the right hand side of the above inequality. 
\begin{list}{$\bullet$}
\item For the Hessians term, we write
\begin{equation}\label{hess}
\n - \wn = (g^{-1} - \wg^{-1}) \nz g + \wg^{-1} \nz (g - \wg).
\end{equation}
Using  (\ref{19gk}), we obtain
\begin{equation}\label{majorhess}
\Norm{(\n - \wn) \, d N}{k,2} 
%& \infeg& \Norm{(g^{-1} - \wg^{-1}) \nz g \; d N}{2} + \Norm{\wg^{-1} \nz (g - \wg) \, d N }{2}\\
%& \infeg& c \, \Norm{g - \wg}{\infty} \Norm{\nz g \; d N}{2, -2 \delta} + \Norm{\wg^{-1}}{\infty,0} \Norm{\nz (g - \wg) \, d N }{2}\\
%& \infeg& c \, \Norm{g - \wg}{2,2} \Norm{\nz g \; d N}{2, 0} + c \Norm{\nz (g - \wg) \, d N }{2, 0}.
%\end{eqnarray*}
%En utilisant ,
%\begin{eqnarray}
%\Norm{(\n - \wn) \, d N}{2} & \infeg& c \, \Norm{g - \wg}{2,2} \Norm{\nz g}{1,2} \Norm{d N}{1,2} + c \, \Norm{\nz (g - \wg)}{1,2} \Norm{d N }{1,2} \nonumber\\
 \infeg C \, \Norm{g - \wg}{k+2,2} \Norm{N}{k+2,2}.
\end{equation}
\item \item For the Laplacian terms, we decompose
\begin{eqnarray*}
g\, \laplag N - \wg \, \wlapla N &=& g\, \laplag N - \wg \, \laplag N + \wg \, \laplag N - \wg \, \wlapla N\\
&=& (g - \wg ) \laplag N + \wg (\laplag N - \wlapla N)\\
%&=& (g  - \wg ) g^{-1} \n d N + \wg (g^{-1} \n d N - \wg^{-1} \wn d N)\\
&=& (g - \wg ) g^{-1} \n d N + \wg  (g^{-1} - \wg^{-1}) \n d N + \wg \wg^{-1}(\n - \wn) d N.
\end{eqnarray*}
Using   (\ref{19gk}), we deduce
\begin{eqnarray*}
\Norm{g\,\laplag N - \wg\, \wlapla N}{k,2}
% & \infeg& \Norm{(g - \wg ) g^{-1} \n d N}{2} + \Norm{\wg (g^{-1} - \wg^{-1}) \n d N }{2}\\
%&& + \Norm{\wg \wg^{-1}(\n - \wn) d N}{2}\\
%& \infeg& \Norm{g- \wg}{\infty} \Norm{g^{-1}}{\infty,0}  \Norm{\n d N}{2}\\
%&& + \Norm{g^{-1} - \wg^{-1}}{\infty} \Norm{\wg}{\infty,0}  \Norm{\n d N}{2}\\
%&&+\Norm{\wg}{\infty,0}\Norm{\wg^{-1}}{\infty,0}\Norm{(\n - \wn) d N}{2}\\
& \infeg& c \, \Norm{g - \wg}{k+2,2} \Norm{\n d N}{k,2} + c \, \Norm{(\n - \wn) d N}{k,2}.
\end{eqnarray*}
Now, considering (\ref{adnbis}) and given that $\n \simeq A + \nz$ , we have
\begin{equation}\label{hessn}
\Norm{\n d N}{k,2} \infeg C \, \Norm{N}{k+2,2} \,,
\end{equation}
Using $(\ref{majorhess})$ and $(\ref{hessn})$ , we  finally get
\begin{equation*}
\Norm{g\, \laplag N - \wg\, \wlapla N}{k,2} \infeg C \, \Norm{g - \wg}{k+2,2} \Norm{N}{k+2,2}.
\end{equation*}
\item For the Ricci tensors term, we
 define $\wade{i}{k}{j} = \wcde{i}{k}{j}-\cdez{i}{k}{j}$ and we
set $$\w T := \wn - \n = \wg^{-1} \nz \wg - g^{-1} \nz g = (g^{-1} - \wg^{-1}) \nz g + \wg^{-1} \nz (g - \wg).$$
Using  (\ref{19gk}),
\begin{eqnarray}\label{normT}
\Norm{\w T}{k+1,2} 
%& \infeg& \Norm{(g^{-1} - \wg^{-1})}{\infty} \Norm{\nz g }{2} + \Norm{g^{-1}}{\infty, 0} \Norm{\nz (g - \wg) }{2} \nonumber \\
%& \infeg& c \, \Norm{g - \wg}{\infty}  \Norm{\nz g }{2} + c \, \Norm{\nz (g - \wg) }{2} \nonumber\\
& \infeg& C \Norm{g - \wg}{k+2,2}.
\end{eqnarray}
We can show, adding and subtracting $Ric(\gz)$ and using $(\ref{riccicov})$, that
$$\lbrack Ric (g) - Ric (\wg)\rbrack N \simeq (\nz \w T + \w{A}\w T + \w T^{2}) N \, ,$$
which leads to
\begin{equation}\label{majric}
\Norm{\lbrack Ric (g) - Ric (\wg)\rbrack N}{k,2} \infeg \Norm{\nz \w T N}{k,2} + \Norm{\w{A}\w T N }{k,2} + \Norm{\w T^{2} N }{k,2}.
\end{equation}
Using  (\ref{19gk}) and (\ref{normT})
\begin{eqnarray*}
\Norm{\nz \w T N}{k,2} 
%& \infeg& \Norm{\nz T}{2, 0} \Norm{N }{\infty}\\
%& \infeg& \Norm{\nz T}{2} \Norm{N }{\infty}\\
%& \infeg& c \Norm{T}{1,2} \Norm{N }{2,2}\\
&\infeg& C \, \Norm{g - \wg}{k+2,2} \Norm{N }{k+2,2}
\end{eqnarray*}
The same method for the term $\w{A}\w T N $ gives , considering (\ref{19gk})
\begin{eqnarray*}
\Norm{\w{A}\w T N }{k,2}
% &=& \Norm{\wg^{-1} \nz \wg \, T N}{2} \infeg \Norm{\wg^{-1}}{\infty,0} \Norm{\nz \wg \, T N}{2}\\
%& \infeg& c \, \Norm{\nz \wg}{1,2}\Norm{T N}{1,2}\\
%& \infeg& c \, \Norm{\nz \wg}{1,2}\Norm{T N}{1,2, 0}\\
%& \infeg& C \, \Norm{T}{1,2}\Norm{N}{1,2}\\
&\infeg& C \, \Norm{g - \wg}{k+2,2} \Norm{N }{k+2,2}.
\end{eqnarray*}
In the same way  with $(\ref{normT})$ ,
\begin{eqnarray*}
\Norm{\w T^{2} N }{k,2} 
%&\infeg& \Norm{T^{2}}{2, 0} \Norm{N }{\infty}\\
%&\infeg& \Norm{T^{2}}{2} \Norm{N }{\infty}\\
%&\infeg& c \, \Norm{T}{1,2}^{2}  \Norm{N }{2,2}\\
&\infeg& C \, \Norm{g - \wg}{k+2,2}^{2} \Norm{N }{k+2,2}.
\end{eqnarray*}
Replacing in (\ref{majric}), we obtain
\begin{equation}\label{majric2}
\Norm{\lbrack Ric (g) - Ric (\wg)\rbrack N}{k,2} \infeg C\, \Norm{g - \wg}{k+2,2} \Norm{N}{k+2,2}.
\end{equation}
\item For the scalar curvatures term,  we write
\begin{eqnarray*}
(R(g)- 2\Lambda) g\,- (R(\wg)- 2\Lambda) \wg &=& (g - \wg)(R(g)- 2\Lambda) + \wg \wg^{-1}(Ric \, g - Ric \, \wg)\\
&&+ \wg (g^{-1} - \wg^{-1}) Ric \, g\\
&=& (g - \wg) \left\lbrack R(g)- 2\Lambda + n(n-1)\tau^{2} \right\rbrack\\
&&-n(n-1)\tau^{2}(g - \wg) + \wg \wg^{-1}(Ric \, g - Ric \, \wg) \\
&&+ \wg (g^{-1} - \wg^{-1}) \big\{ Ric \, g - Ric \, \gz \big\} .
\end{eqnarray*}
The inequilty  (\ref{19gk}) and (\ref{majric2}) will yield
\begin{equation*}
\Norm{\lbrack (R(g)- 2\Lambda) g\,- (R(\wg)- 2\Lambda) \wg \rbrack N}{k,2} \infeg C \, \Norm{g - \wg}{k+2,2}\Norm{N}{k+2,2} \, ,
\end{equation*}
because for instance $\forall u \in\sob{k+2}{2}{}, \forall v \in \sob{k}{2}{}$ ,
\begin{eqnarray*}
\Norm{(g - \wg ) \, u \, v \, N}{k,2}  &\infeg& \Norm{g - \wg}{k+2,2} \Norm{u}{k+2,2} \Norm{v N}{k,2}\\
&\infeg& C \, \Norm{g - \wg}{k+2,2} \Norm{u}{k+2,2} \Norm{v}{k,2} \Norm{N}{k+2,2} ,
\end{eqnarray*}
where $C$ is a positive constant depending on $\gz $ and $\Norm{g}{\F}$.
\end{list}
Putting the pieces all together in (\ref{declip}) and taking (\ref{majdphio}) into account lead to
\begin{equation*}
\Norm{\lbrack D\phiogs - D\phio (\tilde{g})^{*}\rbrack N}{k,2} \infeg C \, \Norm{g - \wg}{k+2,2}\Norm{N}{k+2,2}, 
\end{equation*}
and close the proof. $\cqfd$\\[.3cm]
The dependence in $(g,\pi)$ of $P^*$ is controlled as follows :
%La proposition suivante est l'analogue en asymptotiquement hyperbolique de la Proposition 3.4 de \cite{Bartnik2005}.
\begin{prop}{}\label{proplipcg}
When $k+2>n/2$, the operator $P^{*} : \sob{k+2}{2}{}(\T) \longrightarrow  \sob{k}{2}{} \;$ is bounded and satisfies
\begin{equation}\label{estimeePs}
\Norm{\xi}{k+2,2} \infeg c \, \Norm{P^{*} \xi }{k,2} + C \, \Norm{\xi}{0,2} \, ,
\end{equation}
where $C$ depends on $\gz$ and $\Norm{(g,\pi)}{\F}$.\\
Moreover, $\Pgpis$ depends on $(g,\pi) \in \F$ in a Lipschitz way,
\begin{equation}\label{lipcg}
\Norm{(\Pgpis - \Pgpisw) \, \xi}{k,2} \infeg C_{1} \Norm{(g - \t{g}, \pi - \t{\pi})}{\F} \; \Norm{\xi}{k+2,2}\, ,
\end{equation}
where constant $C_{1}$ depends on $\gz  , \Norm{(g,\pi)}{\F}$ and $\Norm{(\t{g},\t{\pi})}{\F}$.
\end{prop}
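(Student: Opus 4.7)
The proposal is to dispatch the three assertions separately, exploiting the explicit factorisation (\ref{defPs}) of $P^*$ to reduce everything to already-proved statements: boundedness and the elliptic estimate follow almost directly from Proposition \ref{prop3.3g}, while the Lipschitz dependence is obtained by extending the preceding lemma from $D\phibf(g,0)^*$ to the full operator $P^*$.

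First I would verify boundedness of $P^{*}:\sob{k+2}{2}{}(\T)\to\sob{k}{2}{}$. Writing $P^{*}\xi=(g^{-1/4}D\phigpis_{1}.\xi,\;g^{1/4}\nz D\phigpis_{2}.\xi)$ via (\ref{defPs}), each term in (\ref{dphis}) is polynomial in the coefficients $(g,g^{-1},\sqrt g,1/\sqrt g,\nz g,\nz^{2}g,\pi,\nz\pi)$ acting on $\xi$ and $\nz\xi$. Lemma \ref{lemmePiotr} together with estimate (\ref{19gk}) then controls every such product in $\sob{k}{2}{}$ by $C\,\Norm{\xi}{k+2,2}$; the outer $\nz$ in the second component costs one derivative on the coefficients, which is still permissible because $g\in H^{k+2}$ and $\pi\in H^{k+1}$.

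For the elliptic estimate (\ref{estimeePs}), I would apply Proposition \ref{prop3.3g} and translate its right-hand side in terms of $P^{*}\xi$. Since $D\phigpis_{1}.\xi=g^{1/4}\,(P^{*}\xi)_{1}$, multiplication by $g^{1/4}\in H^{k+2}$ gives $\Norm{D\phigpis_{1}.\xi}{k,2}\leq C\,\Norm{(P^{*}\xi)_{1}}{k,2}$. Similarly $\nz D\phigpis_{2}.\xi=g^{-1/4}\,(P^{*}\xi)_{2}$, hence
$$\Norm{D\phigpis_{2}.\xi}{k+1,2}\leq \Norm{\nz D\phigpis_{2}.\xi}{k,2}+\Norm{D\phigpis_{2}.\xi}{0,2}\leq C\Norm{(P^{*}\xi)_{2}}{k,2}+\Norm{D\phigpis_{2}.\xi}{0,2}.$$
The last term is of lower order in $\xi$ (involving only $N$ and $\nz X$ in $L^{2}$), and is absorbed in $\epsilon\,\Norm{\xi}{k+2,2}+C\,\Norm{\xi}{0,2}$ by the Ehrling inequality (\ref{Ehrling}), after which Proposition \ref{prop3.3g} closes the argument.

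The main obstacle is the Lipschitz estimate (\ref{lipcg}). The plan is to mimic the strategy of (\ref{lip}) and split
$$(\Pgpis-\Pgpisw)\,\xi \;=\; \sum_{i} B_{i}(g,\pi,\t g,\t\pi)\cdot D_{i}\cdot \xi,$$
where each $D_{i}$ is one of the \emph{difference factors} $g-\t g$, $g^{-1}-\t g^{-1}$, $\sqrt g-\sqrt{\t g}$, $\nz g-\nz\t g$, $\nz^{2}g-\nz^{2}\t g$, $\pi-\t\pi$, $\nz\pi-\nz\t\pi$, and each $B_{i}$ is a polynomial expression in the coefficients which is bounded, in the appropriate Sobolev norm, uniformly by $C(\Norm{(g,\pi)}{\F},\Norm{(\t g,\t\pi)}{\F})$. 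The polynomial structure of $\phibf$ noted in the smoothness proof guarantees that this decomposition has only finitely many terms. Each difference factor is controlled in the relevant Sobolev space by $\Norm{(g-\t g,\pi-\t\pi)}{\F}$ (using, for $g^{-1}-\t g^{-1}$ and $\sqrt g-\sqrt{\t g}$, the smoothness of $g\mapsto g^{-1}$, $g\mapsto\sqrt g$ on positive-definite matrices restricted to $\G^{+}_{\lambda}$), and the remaining products are handled by Lemma \ref{lemmePiotr} and Remark \ref{produitadapte}. The only genuinely new terms compared with the preceding lemma come from the $\pi$-dependent blocks of (\ref{dphis}) and from the outer $\nz$ of $D\phigpis_{2}$; the latter costs one derivative on a difference factor, but since the relevant coefficient already appears in $H^{k+1}$ or $H^{k+2}$, the product lemma still applies precisely when $k+2>n/2$. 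Assembling the estimates, each summand contributes a term $\leq C\,\Norm{(g-\t g,\pi-\t\pi)}{\F}\,\Norm{\xi}{k+2,2}$, yielding (\ref{lipcg}).
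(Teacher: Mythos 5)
Your proposal follows the same strategy as the paper: the boundedness and elliptic estimate are read off from the factorisation (\ref{defPs}) together with Proposition \ref{prop3.3g} and the multiplication estimates (Lemma \ref{lemmePiotr}, (\ref{19gk})), and the Lipschitz bound is obtained by the same telescoping decomposition of $(\Pgpis-\Pgpisw)\xi$ into the two components $E,F$, reusing the preceding lemma's estimate (\ref{lip}) for the time-symmetric block. Your account of the Lipschitz step is more schematic than the paper's fully written-out decomposition, but the key ingredients (difference factors controlled in $\F$-norm, product lemma, the $\pi$-blocks and the extra $\nz$ in $D\phigpis_2$ as the genuinely new terms) are exactly those used in the paper, and your more explicit conversion of the right-hand side of (\ref{35cg}) into $\|P^*\xi\|_{k,2}$ actually supplies a detail the paper glosses over.
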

\textbf{Proof}: Let us begin by showing $P^{*}$ is bounded, \ie
\begin{equation}\label{majPs}
\Norm{P^{*} \, \xi}{k,2} \infeg C \; \Norm{\xi}{k+2,2}.
\end{equation}
We set
$$
\begin{cases}
\Ps = \Pgpis,\\
D\phisun = D\phigpis_{1},\\
D\phisdeux = D\phigpis_{2}.
\end{cases}
$$
From (\ref{defPs}), we have
\begin{eqnarray}\label{majPsbis}
\Norm{P^{*} \, \xi}{k,2} &\infeg& c \; (\Norm{D\phisun.\xi}{2} + \Norm{\n D\phisdeux.\xi}{k,2}) \nonumber \\
 &\infeg& c \; (\Norm{D\phisun.\xi}{k,2} + \Norm{\nz D\phisdeux.\xi}{k,2} + \Norm{A D\phisdeux.\xi}{k,2}).
\end{eqnarray}
From (\ref{dphis1}), (\ref{19gk}),  we first estimate
\begin{eqnarray}\label{majdphiun}
\Norm{D\phisun.\xi}{k,2} &\infeg& c \, \big(\Norm{T}{k,2} + \Norm{\Sz (X)}{k,2} + \Norm{N}{k,2} \big) + C \, \big(\Norm{\xi}{k+2,2} + \Norm{\nz X}{k+1,2} \big) \nonumber \\
&\infeg& c \, \big(\Norm{N}{k+2,2} + \Norm{X}{k+1,2} + \Norm{N}{k,2} \big) + C \, \big(\Norm{\xi}{k+2,2} + \Norm{\nz X}{k+1,2} \big) \nonumber\\
&\infeg& C \; \Norm{\xi}{k+2,2}.
\end{eqnarray}
From (\ref{dphis2}) along with (\ref{h1}), (\ref{19gk}),  we can also control
\begin{equation}\label{majdphideux}
\Norm{D\phisdeux.\xi}{k,2} \infeg c \, \big(\Norm{\Sz (X)}{k,2} + \Norm{N}{k,2} \big) + C \, \Norm{\xi}{k+1,2},
\end{equation}
\begin{eqnarray*}
\Norm{A D\phisdeux.\xi}{k,2} &\infeg&  c \, \big( \Norm{A \Sz (X)}{k,2} + \Norm{A N}{k,2} \big) + \Norm{\xi}{k+2,2} \\
&\infeg&  C \, \big( \Norm{\Sz (X)}{k+1,2} + \Norm{N}{k+1,2} \big) + \Norm{\xi}{k+2,2} ,\\
%&\infeg&  C \, \Norm{\xi}{2,2}. \\[.3cm]
\Norm{\nz D\phisdeux.\xi}{k,2} &\infeg& c \, \big(\Norm{\nz \Sz (X)}{k,2} + \Norm{ \nz N}{k,2} \big) + \Norm{\xi}{k+2,2}\\
&\infeg&  C \, \Norm{\xi}{k+2,2}.
\end{eqnarray*}
Consequently,
\begin{equation}\label{majdphideux12}
\Norm{D\phisdeux.\xi}{k,2} \infeg \Norm{D\phisdeux.\xi}{k+1,2} \infeg C \, \Norm{\xi}{k+2,2}.
\end{equation}
Every term of the right hand side of  (\ref{majPsbis}) is then dominated by $\Norm{\xi}{k+2,2}$ leading to (\ref{majPs}).
 The estimate (\ref{estimeePs}) satisfied by $P^{*}$ directly comes from (\ref{35cg}).
We now look into the Lipschitz behaviour of $P^{*}$. We 
set
$$
\begin{cases}
\Psw = \Pgpisw,\\
%D\phiows = D\phiogws\\
D\phiwsun = D\phigpiws_{1},\\
D\phiwsdeux = D\phigpiws_{2}.
\end{cases}
$$
Let us write
\begin{eqnarray*}
(\Ps - \Psw) \, \xi &=&
\left(
\begin{array}{c}
g^{-1 \slash 4} \, D\phisun.\xi - \t{g}^{-1 \slash 4} \, D\phiwsun .\xi \\
 g^{1 \slash 4} \, \n D\phisdeux.\xi - \t{g}^{1 \slash 4} \, \nw D\phiwsdeux.\xi 
\end{array}
\right)
=:
\left(
\begin{array}{c}
E \\
 F
\end{array}
\right),
\end{eqnarray*}
so
\begin{equation}\label{majlipEF}
\Norm{(\Ps - \Psw) \, \xi}{k,2} \infeg \Norm{E}{k,2} + \Norm{F}{k,2}.
\end{equation}
We start to estimate
\begin{eqnarray*}
E &=& g^{-1 \slash 4} \, D\phisun.\xi - \t{g}^{-1 \slash 4} \, D\phiwsun .\xi \\
&=& (g^{-1 \slash 4} - \t{g}^{-1 \slash 4}) \, D\phisun.\xi + \t{g}^{-1 \slash 4} \, ( D\phisun.\xi -D\phiwsun .\xi ).
\end{eqnarray*}
Using  (\ref{19gk}),
\begin{eqnarray*}
\Norm{E}{k,2} &\infeg& \Norm{(g^{-1 \slash 4} - \t{g}^{-1 \slash 4}) \, D\phisun.\xi}{k,2} + \Norm{\t{g}^{-1 \slash 4} \, ( D\phisun.\xi -D\phiwsun .\xi )}{k,2}\\
%&\infeg& \Norm{g^{-1 \slash 4} - \t{g}^{-1 \slash 4}}{\infty}\Norm{D\phisun.\xi}{2} + \Norm{\t{g}^{-1 \slash 4}}{\infty,0}\Norm{D\phisun.\xi -D\phiwsun .\xi}{2}\\
&\infeg& c \, \Norm{g - \t{g}}{\F}\Norm{D\phisun.\xi}{k,2} + c \, \Norm{D\phisun.\xi -D\phiwsun .\xi}{k,2}.
\end{eqnarray*}
From (\ref{expdphi1dphi0}), we expand
\begin{eqnarray*}
D\phisun.\xi -D\phiwsun .\xi &=& \lbrack D \phibf (g,0)^{*} - D \phibf (\wg,0)^{*} \rbrack \, (N,0) + (\Pi\sqrt{g} - \t{\Pi} \sqrt{\t{g}}) N  +  X \nz (\pi - \t{\pi}) \\
&& + (\pi - \t{\pi}) \nz X + A X (\pi - \t{\pi}).
\end{eqnarray*}
Using (\ref{h1}), (\ref{19gk}), 
we already have
\begin{eqnarray*}
\Norm{(\pi - \t{\pi}) \nz X}{k,2} + \Norm{X \nz (\pi - \t{\pi})}{k,2} &\infeg& \Norm{\pi - \t{\pi}}{k+1,2}\Norm{\nz X}{k+1,2}\\
&& + \Norm{\nz (\pi - \t{\pi})}{k,2} \Norm{X}{k+2,2}\\
&\infeg& c \, \Norm{\pi - \t{\pi}}{k+1,2} \; \Norm{X}{k+2,2},
\end{eqnarray*}
and
\begin{eqnarray*}
\Norm{A X (\pi - \t{\pi})}{k,2} &\infeg& \Norm{A (\pi - \t{\pi})}{k,2} \Norm{X}{k+2,2}\\
%&\infeg& c \, \Norm{A}{1,2} \Norm{\pi - \t{\pi}}{1,2} \; \Norm{X}{2,2}\\
&\infeg& C \, \Norm{\pi - \t{\pi}}{k+1,2} \; \Norm{X}{k+2,2}.
\end{eqnarray*}
Now, we write formally 
\begin{eqnarray*}
\Pi\sqrt{g} - \t{\Pi} \sqrt{\t{g}}&\thicksim& \frac{1}{\sqrt{g}}  g^{-1} \pi^{2}  - \frac{1}{\sqrt{\t{g}}} \t{g}^{-1} \t{\pi}^{2}\\
&\thicksim& \frac{1}{\sqrt{g}}  (g^{-1} -  \t{g}^{-1} ) \pi^{2} + \frac{1}{\sqrt{g}} \t{g}^{-1} (\pi^{2} - \t{\pi}^{2}) + (\frac{1}{\sqrt{g}} - \frac{1}{\sqrt{\t{g}}}) \t{g}^{-1} \t{\pi}^{2},
\end{eqnarray*}
leading to
\begin{eqnarray*}
\Norm{(\Pi\sqrt{g} - \t{\Pi} \sqrt{\t{g}})N}{k,2}
% &\infeg& c \, \Norm{g^{-1} -  \t{g}^{-1}}{\infty} \big(\Norm{(\pi-\piz)^{2}}{2} \Norm{N}{\infty,-3\delta} + \Norm{\piz^{2}}{\infty,0} \Norm{N}{\infty} \big)\\
%&& + c \, \Norm{\pi-\t{\pi}}{1,2}(\Norm{\pi-\piz}{1,2} + \Norm{\t{\pi}-\piz}{1,2})\Norm{N}{\infty,-3\delta}\\
%&& + c \, \Norm{\pi-\t{\pi}}{2}\Norm{\piz^{2}}{\infty,0} \Norm{N}{\infty}\\
%&& + c \, \BNorm{\frac{1}{\sqrt{g}} - \frac{1}{\sqrt{\t{g}}}}{\infty} \big(\Norm{(\t{\pi}-\piz)^{2}}{2}\Norm{N}{\infty,-3\delta} + \Norm{\piz^{2}}{\infty,0}\Norm{N}{\infty} \big)\\
%&\infeg& c \, \Norm{g -  \t{g}}{2,2} \big(\Norm{\pi-\piz}{1,2}^{2} + 1 \big) \Norm{N}{\infty}\\
%&&  + c \, \big(\Norm{\pi-\piz}{1,2} + \Norm{\t{\pi}-\piz}{1,2} + 1 \big)\Norm{\pi - \t{\pi}}{1,2}\Norm{N}{\infty}\\
%&& + c \, \BNorm{\frac{1}{\sqrt{g}} - \frac{1}{\sqrt{\t{g}}}}{2,2} \big(\Norm{\t{\pi}-\piz}{1,2}^{2} + 1 \big)\Norm{N}{\infty} \\
&\infeg& C \, \Norm{(g - \t{g}, \pi - \t{\pi})}{\F} \Norm{N}{k+2,2}.
\end{eqnarray*}
Given also (\ref{lip}), we obtain
\begin{equation*}
\Norm{D\phisun.\xi -D\phiwsun .\xi}{k,2} \infeg C \, \Norm{(g - \t{g}, \pi - \t{\pi})}{\F} \Norm{\xi}{k+2,2},
\end{equation*}
and taking (\ref{majdphiun}) into account,
\begin{equation}\label{majE}
\Norm{E}{k,2} \infeg C \, \Norm{(g - \t{g}, \pi - \t{\pi})}{\F} \Norm{\xi}{k+2,2}.
\end{equation}

We will now estimate the term
\begin{eqnarray*}
F &=& g^{1 \slash 4} \, \n D\phisdeux.\xi - \t{g}^{1 \slash 4} \, \nw D\phiwsdeux.\xi \\
&=& g^{1 \slash 4} \, (\n - \nw) D\phisdeux.\xi + (g^{1 \slash 4} - \t{g}^{1 \slash 4}) \, \nw D\phiwsdeux.\xi +  g^{1 \slash 4} \, \nw (D\phisdeux.\xi - D\phiwsdeux.\xi).
\end{eqnarray*}
Using (\ref{hess}),(\ref{h1}), (\ref{19gk}), 
\begin{eqnarray*}
\Norm{F}{k,2} &\infeg& c \, \Norm{\n - \nw}{k+1,2}\Norm{D\phisdeux.\xi}{k+1,2} + \Norm{g^{1 \slash 4} - \t{g}^{1 \slash 4}}{k+2,2}\Norm{ \nw D\phiwsdeux.\xi}{k,2}\\
&&+ c \, \Norm{\nz (D\phisdeux.\xi - D\phiwsdeux.\xi)}{k,2} + c \, \Norm{A (D\phisdeux.\xi - D\phiwsdeux.\xi)}{k,2}\\
&\infeg& C \, \Norm{g - \t{g}}{\F} \Norm{D\phisdeux.\xi}{k+1,2} +  c \, \Norm{g -  \t{g}}{\F}\Norm{ \nw D\phiwsdeux.\xi}{k,2}\\
&&+ c \, \Norm{\nz (D\phisdeux.\xi - D\phiwsdeux.\xi)}{k,2} + c \, \Norm{A (D\phisdeux.\xi - D\phiwsdeux.\xi)}{k,2}.
\end{eqnarray*}
Considering (\ref{majdphideux}) and (\ref{majdphideux12}), one has
\begin{eqnarray}\label{majFbis}
\Norm{F}{k,2} &\infeg& C \, \Norm{g - \t{g}}{\F} \Norm{\xi}{k+2,2} + c \, \Norm{\nz (D\phisdeux.\xi - D\phiwsdeux.\xi)}{k,2}\\
&& + c \, \Norm{A (D\phisdeux.\xi - D\phiwsdeux.\xi)}{k,2}, \nonumber
\end{eqnarray}
with, formally,
\begin{eqnarray*}
D\phisdeux.\xi - D\phiwsdeux.\xi &\thicksim& (K - \t{K}) N + ( A - \t{A})X \\
&\thicksim& (\pi - \t{\pi}) N + ( \n - \nw)X .
\end{eqnarray*}
Using (\ref{hess}),(\ref{h1}), (\ref{19gk}), we deduce
\begin{eqnarray*}
\Norm{\nz (D\phisdeux.\xi - D\phiwsdeux.\xi)}{k,2} &\infeg& c \, \Norm{\nz (\pi - \t{\pi})}{k,2}\Norm{N}{k+2,2} +  c \, \Norm{\pi - \t{\pi}}{k+1,2}\Norm{\nz N}{k+1,2}\\
&& + \Norm{\nz(\n - \nw)}{k,2}\Norm{X}{k+2,2} + \Norm{\n - \nw}{k+1,2}\Norm{\nz X}{k+1,2}\\
&\infeg&  c \, \Norm{\pi - \t{\pi}}{k+1,2}\Norm{N}{k+2,2} + c \, \Norm{\n - \nw}{k+1,2}\Norm{X}{k+2,2} \\
&\infeg& C \, \Norm{(g - \t{g},\pi - \t{\pi})}{\F} \Norm{\xi}{k+2,2}.
\end{eqnarray*}
In the same way,
\begin{eqnarray*}
\Norm{A (D\phisdeux.\xi - D\phiwsdeux.\xi)}{k,2} &\infeg& c \, \Norm{A(\pi - \t{\pi})}{k,2}\Norm{N}{k+2,2} + \Norm{A(\n - \nw)}{k,2}\Norm{X}{k+2,2}\\
%&\infeg&  c \Norm{A}{1,2} \Norm{\pi - \t{\pi}}{1,2}\Norm{N}{2,2} + c \, \Norm{A}{1,2} \Norm{\n - \nw}{1,2}\Norm{X}{2,2} \\
&\infeg& C \, \Norm{(g - \t{g},\pi - \t{\pi})}{\F} \Norm{\xi}{k+2,2}.
\end{eqnarray*}
We deduce from (\ref{majFbis})
\begin{equation}\label{majF}
\Norm{F}{2} \infeg C \, \Norm{(g - \t{g}, \pi - \t{\pi})}{\F} \Norm{\xi}{k+2,2}.
\end{equation}
  {The  desired Lipschitz estimate}  (\ref{lipcg}) arises from (\ref{majlipEF}), considering (\ref{majE}) and (\ref{majF}).$\cqfd$\\[.3cm]

We claim that  the estimate  {(\ref{35cg}) of Corollary \ref{corestiadjL2}} is also satisfied by weak solutions $\xi$ only in $\sob{-k}{2}{} (\T)$. More precisely, we say that $\xi \in \L$ is a weak solution of $D \phigpis \xi = (f_{1} , f_{2}) \;,$ with $(f_{1} , f_{2}) \in \sob{k}{2}{}(\w{\S}) \times \sob{k+1}{2}{}(\S)$ when
$$ \langle \xi, D \phigpi .(h,p) \rangle= \int_{\m} \langle (f_{1} , f_{2}) , (h,p) \rangle_{\gz} \; , \; \forall (h,p) \in  
\sob{k+2}{2}{} (\S)\times\sob{k+1}{2}{} (\w{\S}).$$
Note that it suffices to verify  the equality for any  $(h,p) \in \cinf_{c} (S) \times \cinf_c( \w{\S})$ by density.
\begin{prop}\label{propRegul}
Let  $k+2>\frac n2$ and   $(g,\pi) \in \G^{+} \times \K \, , $ $(f_{1} , f_{2}) \in \sob{k}{2}{}(\w{\S}) \times \sob{k+1}{2}{}(\S)$. Assume that  $\xi \in \L$ is a weak solution of $D \phigpis \xi = (f_{1} , f_{2})$, then $\xi \in \sob{k+2}{2}{}(\T)$ is a strong solution and satisfies  {(\ref{35cg})}.
\end{prop}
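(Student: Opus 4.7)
The plan is to combine a mollification procedure with a Sobolev-scale bootstrap, using Proposition \ref{prop3.3g} as the core a priori estimate at each step. First I would fix a family of smoothing operators $(J_\epsilon)_{\epsilon > 0}$ acting on sections of $\T$---for instance the heat semigroup $J_\epsilon = \e^{\epsilon \laplagz}$ associated to $\gz$ and a suitable connection on $\T$---which are uniformly bounded on each $H^s$ scale, converge strongly to the identity as $\epsilon \to 0$, and send $H^{-k}$ into $\bigcap_{m \geq 0} H^m$. Writing $\xi_\epsilon := J_\epsilon \xi$, the weak equation for $\xi$ yields
\[
D \phi^*_{(g,\pi)} \xi_\epsilon = J_\epsilon(f_1, f_2) + \big[ D \phi^*_{(g,\pi)}, J_\epsilon \big] \xi,
\]
in which the first term is bounded in $H^k \times H^{k+1}$ uniformly in $\epsilon$ and converges to $(f_1,f_2)$.

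The essential step is a commutator estimate: one shows that $[D\phi^*_{(g,\pi)}, J_\epsilon]$ behaves as an operator of one order lower than $D\phi^*_{(g,\pi)}$ itself, so that its action on $\xi$ can be bounded in one Sobolev order higher than the class in which $\xi$ currently lives. This uses the product rules of Lemma \ref{lemmePiotr} and Remark \ref{produitadapte} to multiply the rough coefficients $(g,\pi) \in H^{k+2}\times H^{k+1}$ with derivatives of $\xi$. The assumption $k+2 > n/2$ is crucial here: it guarantees that $H^{k+2}$ is an algebra and gives just enough control for these products to close. Combining the commutator bound with the a priori estimate (\ref{35cg}) of Proposition \ref{prop3.3g}, applied at the appropriate Sobolev level to the smooth section $\xi_\epsilon$, yields a uniform bound on $\xi_\epsilon$ in one order higher than $\xi$. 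Extracting a weakly convergent subsequence and identifying its limit with $\xi$ then proves that $\xi$ gains one order of regularity.

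Iterating this gain-of-one-derivative bootstrap from the initial regularity $H^{-k}$ upward, after finitely many steps one reaches $\xi \in H^{k+2}$, at which point Proposition \ref{prop3.3g} applies directly and gives the estimate (\ref{35cg}) for $\xi$. The main obstacle is the commutator estimate at the lowest end of the bootstrap, where one must multiply the rough coefficients against distributions living in a negative Sobolev space; the borderline condition $k+2 > n/2$ is precisely what keeps every multiplication within the scope of Lemma \ref{lemmePiotr} and Remark \ref{produitadapte}, so that the chain of estimates closes uniformly at each step.
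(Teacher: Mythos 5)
Your strategy is recognizably a regularity bootstrap, which is the paper's approach in spirit, but the concrete mechanism you propose does not close, and the gap is not cosmetic.

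The paper's argument reduces $D\phigpis\xi=f$ in local coordinates to a divergence-form equation $\partial^2\xi+\partial(b\xi)+c\xi=f$ with $b\in W^{k+1,2}$, $c,f\in W^{k,2}$, and then directly invokes Sobolev multiplication (as in Lemma 28 of \cite{HNT2009}): with $p$ chosen so that $1/p<k+2-n/2$, one gets $c\xi\in W^{-k-2+1/p,2}$, $b\xi\in W^{-k-1+1/p,2}$, hence $\xi\in W^{-k+1/p,2}$. The crucial point is that the gain per step is only $1/p$, and $1/p$ \emph{must} be chosen strictly less than $k+2-n/2$ for the product $H^{k}\times H^{-k}\to H^{-k-2+1/p}$ (and $H^{k+1}\times H^{-k}\to H^{-k-1+1/p}$) to be admissible. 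Your commutator scheme claims a gain of one full derivative per step. That is false in general here: when $k+2-n/2<1$ (e.g.\ $n=3$, $k=0$, where $k+2-n/2=1/2$), no Friedrichs-type commutator estimate with coefficients in $H^{k+2}\times H^{k+1}$ can give $\bigl\|[D\phigpis,J_\epsilon]\xi\bigr\|_{H^{s-1}}\lesssim\|\xi\|_{H^{s}}$ uniformly down at $s=-k$, because the very products involved (rough coefficients against $\xi$) are only meaningful with a sub-unit gain, as the paper's multiplication bounds show. The commutator route is not a shortcut around this; it runs into exactly the same Sobolev-product constraints.

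There is a second gap: you propose to apply the a priori estimate (\ref{35cg}) of Proposition \ref{prop3.3g} ``at the appropriate Sobolev level'' to $\xi_\epsilon$. But (\ref{35cg}) is only proved at the top level $H^{k+2}\to H^k\times H^{k+1}$; its proof goes through Lemmas \ref{SmajoreX} and \ref{TmajoreN} and the product estimates of Remark \ref{produitadapte}, all calibrated to $k+2>n/2$. No version of that estimate at a lower scale $s<k$ with right-hand side in $H^{s-2}$-type spaces is established, and producing one would require essentially the bootstrap you are trying to avoid. Applying (\ref{35cg}) to the smooth $\xi_\epsilon$ at level $k+2$ does not help either, since $D\phigpis\xi_\epsilon=J_\epsilon f+[D\phigpis,J_\epsilon]\xi$ has a commutator term that is nowhere near $H^k$ when $\xi\in H^{-k}$, so the right-hand side of (\ref{35cg}) is not uniformly controlled and no uniform bound on $\|\xi_\epsilon\|_{k+2,2}$ results. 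To repair your argument you would have to replace ``gain of one derivative'' by ``gain of $1/p$'' and abandon (\ref{35cg}) in favor of direct Sobolev-multiplication bounds on the reduced equation---at which point you recover the paper's proof.
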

\textbf{Proof}: 
We start with an adaptation to the dimension $n$ of the beginning of the proof of the proposition 3.5 of \cite{Bartnik2005} .
%together with the proof of lemma 32 of \cite{HGT2009} . See also corollary 5 in \cite{HGT2009} transposed here in theorem \ref{thSF}.
More precisely, the equation  $P^*\xi=f$ can be rewritten in local coordinate 
$$
A.\partial^2\xi+B.\partial\xi+C.\xi=f, 
$$ with $A\in \sob{k+2}{2}{} $ invertible, $B\in \sob{k+1}{2}{} $, $C\in  \sob{k}{2}{}  $, $\xi\in\sob{-k}{2}{}.$
%Arguing as  in \cite{Bartnik2005} with the Friedrichs mollifier   we arrive at the same  equation   :
%$$
%\Delta_0u=F+\partial G
%$$
%We replace the rest of the proof by using  corollary 5  of \cite{HGT2009} . 
%Taking a trace we arrive to an operator of the form of lemma 32 \cite{HGT2009}     with $\alpha=k+2$,  $\gamma=2$ and  $m=2$. The estimate (A.6) there is true for
%$s=m-\alpha+2=1-k$ and $q=2$ but for $u\in\sob{1-k}{2}{}$. proves that  $\xi\in  \sob{2-k}{2}{loc}$. A bootstrap argument arise at $\xi\in\sob{k+2}{2}{}.$
This is equivalent to an equation of the form (see equation (39) in \cite{Bartnik2005} )
\begin{equation}\label{B39}
\partial^2\xi+\partial(b\xi)+c\,\xi=f,
\end{equation}
with $b\in W^{k+1,2}$ and $c,f\in W^{k,2}$.

From  now one  the proof is different from the Bartnik's one, we do not take a trace of the equation. Let $p\in\mathbb N$, $p>0$ such  that  $k+2-\frac n2>\frac1p$, 
by multiplications in Sobolev spaces  (see \cite{HNT2009}  lemma 28 for instance) we infer that  $c\,\xi\in W^{-k-2+\frac1p,2}$ and $b\,\xi\in W^{-k-1+\frac1p,2}$.
This proves that $\partial^2\xi\in W^{-k-2+\frac1p,2}$ by equation (\ref{B39}), so combined with $\partial\xi\in W^{-k-1,2}\subset W^{-k-2+\frac1p,2}$ implies  $\partial\xi\in W^{-k-1+\frac1p,2}$.
Again this last fact added to $\xi\in W^{-k,2}\subset W^{-k-1+\frac1p,2}$ leads to  $\xi\in W^{-k+\frac1p,2}$. We have then improved the regularity of  $\xi$.
Bootstrapping in this way  (we can add $1/p$ to the regularity at each step)  we will end up with  $\xi\in W^{k+2,2}$.
$\cqfd$\\
In the compact manifold setting, we  may assume  that the  (weak) kernel of $D\phigpi^*$ is trivial and forget this section. But either for a practical verification and for an adaptation to non  compact manifold, where one has to prove there is no kernel under appropriate behaviour,  this  regularity result is very important.

\section{The submanifold structure}\label{SectionStructure}
{We are ready to provide  the smooth Hilbert submanifold structure  of the set of solutions to the vacuum constraint equations.\\
We start with a well known fact.}
\begin{lem}{}\label{lemiso}
Let $X,Y$ be two Banach spaces and $T$ a linear operator with closed range.
\begin{eqnarray*}
T: X &\rightarrow &Y\\
T^{*}: Y^{*}& \rightarrow &X^{*}
\end{eqnarray*}
then $({\Coker} T)^{*} \simeq {\ker}\, T^{*}$,
where ${\Coker} \,T := \faktor{Y}{\Im T}$ is a Banach space.
\end{lem}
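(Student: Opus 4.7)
The plan is to identify $(\Coker T)^{*}$ with the annihilator of $\Im T$ in $Y^{*}$, then observe that this annihilator coincides with $\ker T^{*}$ by the very definition of the adjoint.

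First I would check that $\Coker T = Y/\Im T$ is genuinely a Banach space: this relies on the hypothesis that $\Im T$ is closed in $Y$, so that the quotient norm $\|[y]\| = \inf_{z \in \Im T}\|y-z\|_Y$ makes $Y/\Im T$ into a Banach space and the projection $\pi: Y\to Y/\Im T$ continuous and open.

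Next, I would invoke the standard quotient duality: for any closed subspace $Z \subset Y$, the map
\begin{equation*}
\Psi : (Y/Z)^{*} \longrightarrow Z^{\perp}, \qquad \Psi(\varphi) = \varphi \circ \pi,
\end{equation*}
is an isometric isomorphism onto the annihilator $Z^{\perp} := \{y^{*}\in Y^{*} : \langle y^{*}, z\rangle = 0 \;\forall z\in Z\}$. The inverse sends $y^{*}\in Z^{\perp}$ to the unique continuous functional on $Y/Z$ induced by passing to the quotient (well-defined precisely because $y^{*}$ vanishes on $Z$). Applied with $Z = \Im T$, this gives $(\Coker T)^{*} \simeq (\Im T)^{\perp}$ isometrically.

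Finally, I would unfold $(\Im T)^{\perp}$ using the defining relation of the adjoint. A functional $y^{*} \in Y^{*}$ lies in $(\Im T)^{\perp}$ if and only if $\langle y^{*}, Tx\rangle_Y = 0$ for every $x \in X$, equivalently $\langle T^{*}y^{*}, x\rangle_X = 0$ for every $x \in X$, which is equivalent to $T^{*}y^{*}= 0$. Hence $(\Im T)^{\perp} = \ker T^{*}$, and composing the two isomorphisms yields $(\Coker T)^{*} \simeq \ker T^{*}$. There is no real obstacle here; the only subtle point is the closed range hypothesis, which is used solely to ensure that $Y/\Im T$ is Banach (so its dual has meaning) and that the quotient duality applies verbatim.
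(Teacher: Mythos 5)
Your proof is correct and is precisely the classical argument the paper alludes to; the paper itself gives no details, simply citing this as a standard fact (referring to Lemma 22 of \cite{DF2016}). Your chain $(\Coker T)^{*}\simeq(\Im T)^{\perp}=\ker T^{*}$ via quotient duality and the definition of the adjoint is exactly the textbook route, with the closed-range hypothesis used where you say it is, so there is nothing to add.
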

\textbf{Proof}:  
This is a classical argument, see \cite{DF2016} lemma 22 for instance%We define
%\begin{eqnarray*}
 %\psi \colon {\ker}\; T^{*}&\rightarrow &(\Coker T)^{*} = \mathcal{L} \left(\faktor{Y}{\Im T}, \R \right)\\
%\rho & \mapsto &(\lambda \colon y + TX \mapsto \rho(y))
%\end{eqnarray*}
%The map $\lambda$ is well defined because $\forall x \in X \, , \rho(Tx) = T^{*}(\rho)(x) = 0$.\\
%The map $\psi$ is invertible and
% {\begin{eqnarray*}
%\psi^{-1} \colon (\Coker T)^{*}&\rightarrow &\text{ker}\, T^{*}\\
%\lambda & \mapsto & \rho \hspace{1cm}\text{ where } \rho(y) :=\lambda(y + TX).
%\end{eqnarray*}
%Note that $\rho \in \text{ker}\, T^{*}$} because $T^{*}(\rho)(x) = \rho(Tx) = \lambda(\bar{0}) = 0. 
$\cqfd$\\
%Of course, the closed range of $T$ imply ${\Coker} T $ is a Banach space.\\

{We can now state our main result.}
\begin{thm}{}\label{mainthmmanifold}
Let $\phibf : \F \rightarrow \L^{*}$ be the constraint operator and assume that $k+2>\frac n2$.
For every $\epsilon \in \L^{*}$ , the set of solutions of the constraint equations without KID's
$$\contrainte (\epsilon) := \lbrace (g,\pi) \in \F : \ker D\phigpi^*=\{0\},\;\phigpi = \epsilon \rbrace$$
is a submanifold of $\F$. In particular,  the space of solutions, without KID's, of the vacuum constraint equations  $\contrainte = \contrainte (0)$ has a Hilbert submanifold structure.
\end{thm}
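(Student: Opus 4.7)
The plan is to apply the Hilbert-space implicit function theorem to the smooth map $\phibf:\F\to\L^*$ on the open subset
$$
\mathcal{U} := \{(g,\pi)\in\F \,:\, \ker\,D\phigpi^{*} = \{0\}\}.
$$
Fixing $(g_0,\pi_0)\in \contrainte(\epsilon)$, I would establish in order: (1) $\mathcal U$ is open in $\F$; (2) $D\phi_{(g,\pi)}$ is surjective onto $\L^*$ with split kernel at every $(g,\pi)\in\mathcal U$; (3) the conclusion then follows from the implicit function theorem since $\contrainte(\epsilon) = \phibf^{-1}(\epsilon)\cap\mathcal U$.

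For step (1), Proposition \ref{propRegul} identifies the no-KID condition (a priori a statement about distributional solutions in $\L$) with $\ker P^{*}\cap H^{k+2}(\T)=\{0\}$, which is the natural setting for the elliptic estimate (\ref{estimeePs}). A standard Rellich--Kondrachov compactness/contradiction argument then upgrades (\ref{estimeePs}) at $(g_0,\pi_0)$ to the improved injectivity estimate
$$
\Norm{\xi}{k+2,2}\leq c\,\Norm{P^{*}_{(g_0,\pi_0)}\xi}{k,2},\qquad \forall\,\xi\in H^{k+2}(\T),
$$
and the Lipschitz dependence (\ref{lipcg}) of $\Pgpis$ on $(g,\pi)$ propagates this estimate to an $\F$-neighbourhood of $(g_0,\pi_0)$. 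Since $P^{*}$ differs from $D\phigpi^{*}$ only by the pointwise bundle isomorphism $\zeta$ of (\ref{defzeta}) and an order-one postcomposition (see (\ref{Ps})--(\ref{defPs})), the corresponding injectivity is inherited by $D\phigpi^{*}$ on a full neighbourhood of $(g_0,\pi_0)$, proving $\mathcal U$ is open.

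For step (2), the improved estimate above shows that $P^{*}$ is an injection with closed range. Surjectivity of $D\phigpi$ onto $\L^{*}$ is then obtained by the standard $PP^{*}$ device: injectivity and closed range of $P^{*}$ combined with the identity $\langle PP^{*}\xi,\xi\rangle = \Norm{P^{*}\xi}{0,2}^{2}$ make $PP^{*}$ into a self-adjoint injection with closed range, hence an isomorphism between the relevant pair of Sobolev spaces; composing with $P^{*}$ produces a bounded right inverse of $P$, and undoing the algebraic factor in (\ref{defP}) yields a bounded right inverse of $D\phigpi$. The kernel of $D\phigpi$ then automatically admits a closed complement in the Hilbert space $H^{k+2}(\S)\times H^{k+1}(\w\S)$.

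Since $\phibf$ is $C^{\infty}$ by the smoothness result following Proposition \ref{phidef}, the hypotheses of the implicit function theorem are satisfied at every point of $\mathcal U$, producing near $(g_0,\pi_0)$ a smooth local chart that presents $\contrainte(\epsilon)$ as a graph over $\ker D\phi_{(g_0,\pi_0)}$; these charts assemble into the desired Hilbert submanifold structure, and the case $\epsilon=0$ recovers the vacuum statement. The main technical hurdle is step (2): one must move between the cleaner operator $P^{*}$, for which the elliptic estimate is naturally formulated, and $D\phigpi$ itself, on the precise Hilbert target $\L^{*}$. It is precisely at this junction that Proposition \ref{propRegul} is indispensable, for without the $H^{k+2}$-regularity of distributional solutions of $D\phigpi^{*}\xi=0$ the no-KID hypothesis would only control a dual kernel in $\L$, and the duality argument yielding surjectivity onto $\L^{*}$ could not be closed inside the Sobolev scale in which the estimates of Proposition \ref{proplipcg} live.
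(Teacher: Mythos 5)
Your overall architecture — apply the implicit function theorem on the open set $\mathcal U$ of no-KID points, with openness obtained from the coercivity of $P^{*}$ propagated by the Lipschitz estimate (\ref{lipcg}), and split kernel for free in the Hilbert setting — is sound, and step (1) in particular is a useful piece of bookkeeping that the paper leaves implicit. However, your step (2) diverges from the paper and, as written, contains a genuine gap.

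You propose to obtain surjectivity of $D\phigpi$ by inverting $PP^{*}$ and "undoing the algebraic factor in (\ref{defP})." The problem is a Sobolev-index mismatch that the phrase "isomorphism between the relevant pair of Sobolev spaces" hides. From (\ref{defPs}), $P^{*}:H^{k+2}(\T)\to H^{k}\times H^{k}$, while $P$ in (\ref{defP}) is $D\phigpi$ precomposed with $\mathrm{diag}(1,-\delta_{g})\circ\zeta$, so $P:H^{k}\times H^{k}\to H^{k-2}$. Hence $PP^{*}:H^{k+2}\to H^{k-2}$ is fourth order, and even granting that it is an isomorphism on this pair, the induced right inverse $R=P^{*}(PP^{*})^{-1}$ maps $H^{k-2}\to H^{k}\times H^{k}$, and $M R$ (with $M=\mathrm{diag}(1,-\delta_{g})\circ\zeta$) maps $H^{k-2}\to H^{k}\times H^{k-1}$. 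That is a right inverse of $D\phigpi$ viewed on the larger, less regular spaces, not the bounded map $\L^{*}=H^{k}\to H^{k+2}\times H^{k+1}$ required to split the target. Closing this requires proving a fourth-order elliptic regularity gain for $PP^{*}$ — namely that $(PP^{*})^{-1}$ carries $H^{k}$ into $H^{k+4}$ — with coefficients only in $H^{k+2}$ (for $g$) and $H^{k+1}$ (for $\pi$); this bootstrap is at best delicate and is nowhere addressed in your proposal.

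The paper takes a genuinely different route that sidesteps the inversion of a fourth-order operator entirely. Density of $\Im D\phigpi$ follows immediately from the no-KID hypothesis via the classical relation $(\ker D\phigpis)^{\perp}=\overline{\Im D\phigpi}$ (this is where Proposition \ref{propRegul} is really used, just as you observe: the kernel lives a priori in $H^{-k}$, and the regularity result moves it into $H^{k+2}$ where the estimates hold). Closedness of the range is then obtained by restricting to the special variations (\ref{defhp}) parametrized by $(y,Y)$, so that $D\phigpi(h,p)=F(y,Y)$ becomes a genuinely \emph{second-order elliptic} system on $(y,Y)$; Theorems \ref{thc} and \ref{thcB} and the approximation argument (a semi-Fredholm limit of Fredholm operators is Fredholm, cf.\ Corollary \ref{corFredholm}) give that $F$ is Fredholm with coefficients only in $H^{k+2}$, and $\Im F\subset\Im D\phigpi$ with finite-dimensional cokernel then forces $\Im D\phigpi$ closed. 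What this decomposition buys is precisely the avoidance of the regularity bootstrap that your $PP^{*}$ route demands: instead of inverting one fourth-order operator with rough coefficients, one combines a soft density argument with a second-order Fredholm property. If you want to salvage the $PP^{*}$ route, you would need to carry out the regularity boost for $PP^{*}$ explicitly in the spirit of Proposition \ref{propRegul}, and verify at each step that the product estimates of Lemma \ref{lemmePiotr} and Remark \ref{produitadapte} (which require exactly $k+2>n/2$) apply to the fourth-order coefficients.
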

  {In order to prove the Theorem 
\ref{mainthmmanifold},we need  to show:}
\begin{list}{$\bullet$}
\item $\ker\; D \phigpi$ splits.
\item \item $D \phigpi$ is surjective.
\end{list}
\medskip
$D \phigpi$ being a bounded operator, its kernel is closed by continuity {and 
$T_{(g,\pi)}\F=\sob{k+2}{2}{} (\S)\times\sob{k+1}{2}{} (\w{\S})$ can be written as a direct sum of $\ker D \phigpi$ and its orthogonal complement $(\ker D \phigpi)^{\perp}$, which is always closed. Hence $\ker D \phigpi$ splits.}\\
 {The assumption about  triviality of ker $D \phigpis$, leads to}
\begin{equation*}
\left( {\ker} D \phigpis \right)^{\perp} = \L^{*}.
\end{equation*}
Using the classical relation
\begin{equation*}
\left( {\ker} D \phigpis \right)^{\perp} = \overline{{\Im} D \phigpi} ,
\end{equation*}
we get
$$\overline{{\Im} D \phigpi} = \L^{*}.$$
Thus, in order to obtain the surjectivity of  $D \phigpi$ , we will  prove it has closed range. 
For that it suffice to prove the range is  the direct sum of a closed space and a  finite dimensional space.
To do so, we consider particular variations $(h,p)$ of $(g,\pi)$ of the form 
\begin{equation}\label{defhp}
\begin{cases}
h_{ij} = 2 \, y \, g_{ij}\\
p^{ij} = \big(2 \S(Y)^{ij} - g^{ij} \, {\tr}_{g}\S(Y) -(n-1)(n-2) \tau y \, g^{ij} \big) \sqrt{g}
\end{cases}
,
\end{equation}
determined from fields $(y,Y^{i})$. We define the operator
\begin{equation}
F(y, Y^{i}) = \lbrack F_{0}(y, Y^{i}),F_{i}(y, Y^{i})\rbrack = \lbrack D \phiogpi (h,p),  D \phiigpi (h,p) \rbrack.
\end{equation}
The equations (\ref{def0}) and (\ref{defi}) provide, \footnote{here a fixed constant $\kappa$ can be chosen to be zero for a compact manifold but we want to make the calculation easily adaptable to open manifold asymptotic to Einstein models, see section \ref{noncomp}}
\begin{equation*}
\left\lbrace
\begin{array}{rcl}
 F_{0}(y, Y^{i})& =& 2(n-1) \sqrt{g} \, \lbrack -\Delta y -\kk n y  \rbrack + (4-n) \, \phiogpi \, y + 2(n-2)\tau\,{  {\div }}Y \sqrt{g}\\
&& + \sob{k}{2}{} \, \lbrack y + Y \rbrack + \sob{k+1}{2}{} \, \nz Y,\\[.2cm]
F_{i}(y, Y^{i}) &=& -2 \sqrt{g} \, \lbrack -\Delta Y^{i} -\kk (n-1) Y^{i}  \rbrack + 2 \, \phiigpi \, y + \sob{k+1}{2}{} \, \nz y +\sob{k}{2}{} \, \lbrack y + Y \rbrack.
\end{array}
\right.
\end{equation*}
{In order to prove Fredholm properties of $F$, we compare to the corresponding one related to $\gz$}
\begin{defi}{}\label{defopasympt}
Let $k\in\mathbb N$. We say an operator $P$ of the form
\begin{equation*}
Pu = a^{ij}(x)\nz^{2}_{ij}u + b^{i}(x)\d_{i}u + c(x)u
\end{equation*}
is well related to $\laplaz$  if there exists $n<q(k+1)<\infty$, and two positive constants $C_{1} , \lambda$ such that
\begin{eqnarray*}
&&\lambda \norm{\xi}{\gz}^{2} \infeg a^{ij}(x) \xi_{i}\xi_{j} \infeg \lambda^{-1} \norm{\xi}{\gz}^{2} , \forall x \in \m \; , \; \xi \in T\m. \\
&&\Norm{a^{ij} - \gz^{ij}}{k+1,q} + \Norm{b^{i}}{k,q} + \Norm{c}{k,q\frac{k+1}{k+2} }\infeg C_{1}.
\end{eqnarray*}
\end{defi}
From the lemma \ref{lemmePiotr}, the operator $P$ is then bounded from $W^{k+2,2}$ to $W^{k,2}$.
{In our situation, we are interested in the Laplacian relative to the metric $g$.}
\begin{prop}{}\label{deltaasymptdelta0} Let $k\in\mathbb N$ such that $k+2>\frac n 2$. Let $c\in W^{k,2}$
and let $g \in \G^{+}$ . Then $ \Delta+c$ is well related to $\laplaz$.
\end{prop}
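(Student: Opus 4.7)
My plan is to verify directly the three items in Definition \ref{defopasympt}.

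First, I would rewrite $\Delta_g$ in terms of the background connection. For a scalar function $u$, one has $\n_i u=\d_i u$ and $\n_i\n_j u=\nz^{2}_{ij}u-\ade{i}{k}{j}\d_k u$, whence
\[
(\Delta_g + c)u = g^{ij}\nz^{2}_{ij}u + b^k\,\d_k u + c\,u,\qquad b^k:=-g^{ij}\ade{i}{k}{j}.
\]
This identifies $a^{ij}=g^{ij}$, the drift $b^k$, and the zeroth order coefficient. Ellipticity comes essentially for free: since $k+2>n/2$, the Hölder embedding (\ref{incholdn}) yields $g\in C^0(\m)$, and its pointwise positive definiteness on the compact $\m$ provides uniform bounds, hence the constant $\lambda$.

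The heart of the argument is choosing a valid exponent $q$. I need $q>n/(k+1)$ together with three Sobolev inclusions,
\[
W^{k+2,2}\hookrightarrow W^{k+1,q},\qquad W^{k+1,2}\hookrightarrow W^{k,q},\qquad W^{k,2}\hookrightarrow W^{k,\,q(k+1)/(k+2)}.
\]
The first two embeddings lose one derivative and require $q\leq 2n/(n-2)$; the third is a same-order embedding on the compact manifold and needs $q(k+1)/(k+2)\leq 2$, i.e.\ $q\leq 2(k+2)/(k+1)$. A direct computation shows that both upper bounds strictly exceed $n/(k+1)$ precisely when $k+2>n/2$, so the interval for $q$ is non-empty exactly under the standing hypothesis.

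Finally, with such a $q$ fixed, I would control the three coefficients. Matrix inversion being analytic on positive definite matrices and preserving Sobolev regularity (cf.\ Lemma \ref{lemmePiotr}), $g^{ij}-\gz^{ij}\in W^{k+2,2}\hookrightarrow W^{k+1,q}$. From (\ref{A}), $A$ is a product of $g^{-1}\in W^{k+2,2}$ and $\nz g\in W^{k+1,2}$, hence $A\in W^{k+1,2}$ by Lemma \ref{lemmePiotr}; multiplying by $g\in W^{k+2,2}$ gives $b\in W^{k+1,2}\hookrightarrow W^{k,q}$. The bound on $c$ is immediate from the last embedding. The only real obstacle is the bookkeeping in the third step, namely verifying that the assumption $k+2>n/2$ is sharp enough to open a non-empty window for $q$.
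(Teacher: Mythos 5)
Your proposal is correct and follows essentially the same route as the paper: the same rewriting of $\Delta_g$ in terms of $\nz$ and $A$, the same ellipticity argument via the Hölder embedding, and the same Sobolev bookkeeping on the coefficients $a^{ij}$, $b^k$, $c$. The only cosmetic difference is that you exhibit a non-empty window of admissible exponents $q\in\bigl(n/(k+1),\min\{2n/(n-2),\,2(k+2)/(k+1)\}\bigr]$, whereas the paper simply fixes $q=2(k+2)/(k+1)$, which sits at the right endpoint of your interval and makes the third embedding (the one for $c$) trivially an equality $q(k+1)/(k+2)=2$.
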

\textbf{Proof}: Recall that \begin{eqnarray}\label{expressionDelta}
\Delta = g^{ij} \n^{2}_{ij} &=& g^{ij} \nz^{2}_{ij} + g^{ij} (\n_{i} - \nz_{i}) \nz_{j} \nonumber\\
&=& g^{ij} \nz^{2}_{ij} - g^{ij} \ade{i}{k}{j} \, \nz_{k}.
\end{eqnarray}
The metrics $g$ and $\gz$ being equivalent, equation (\ref{holdcontcg}) directly gives
\begin{equation*}
\lambda \norm{\xi}{\gz}^{2} \infeg g^{ij}(x) \xi_{i}\xi_{j} \infeg \lambda^{-1} \norm{\xi}{\gz}^{2} , \forall x \in \m \; , \; \xi \in T\m.
\end{equation*}
Setting $$b^{k} = g^{ij} \ade{i}{k}{j},$$ then $b \in \sob{k+1}{2}{}$ from (\ref{A}).
Let us choose $q=2\frac{k+2}{k+1}$, so $(k+1)q>n$ and $2=q\frac{k+1}{k+2}$.
Given the Sobolev inequality, there exists a constant $C_{1} >0 $ such that 
\begin{equation*}
\Norm{g^{ij} - \gz^{ij}}{k+1,q} + \Norm{b^{k}}{k,q} +  \Norm{c}{k, q\frac{k+1}{k+2}} \infeg C \, (\Norm{g^{ij} - \gz^{ij}}{k+2,2} + \Norm{b^{k}}{k+1,2}+\Norm{c}{k,2}) \infeg C_{1}. \cqfd
\end{equation*}
The operator $\A = - \Delta -\kk n$, acting on functions, will be of great interest.
{It satisfies a  classical elliptic  estimate, valid for any weight $s$.}

\begin{prop}{}\label{propelliptique}
Let $g\in \Gplus$  and $\A = - \Delta -\kk n$. There exists a constant $C = C(n,p,q,s,C_{1},\lambda)$ such that if $u \in \leb{2}{}$ and $\A u \in \sob{k}{2}{}$, then $u \in \sob{k+2}{2}{}$  {and}
\begin{equation}\label{estiAu2}
\Norm{u}{k+2,2} \infeg C \, \left(\Norm{\A u }{k,2} + \Norm{u}{0,2} \right).
\end{equation}
\end{prop}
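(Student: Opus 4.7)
My approach is to reduce the problem to the elliptic estimate for the reference Laplacian $\laplaz$ provided by Lemma \ref{lemestellip}, treating the difference $\A - (-\laplaz - \kk n)$ as a perturbation. Using the formula $\Delta = g^{ij}\nz^2_{ij} - b^k\nz_k$ with $b^k = g^{ij}A^k_{ij}$ that appears in (\ref{expressionDelta}), one has
$$
-\laplaz u - \kk n u \;=\; \A u \,+\, (g^{ij} - \gz^{ij})\nz^2_{ij}u \,-\, b^k\nz_k u.
$$
Applying Lemma \ref{lemestellip} to the smooth coefficient elliptic operator $-\laplaz - \kk n$ then yields $\Norm{u}{k+2,2} \leq C(\Norm{(-\laplaz-\kk n)u}{k,2} + \Norm{u}{1})$, and it remains to control the $W^{k,2}$-norms of the two perturbation terms.

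The key tools are Lemma \ref{lemmePiotr} together with the hypothesis $k+2 > n/2$, which by (\ref{incholdn}) makes $g - \gz$ H\"older continuous and places $b$ in $W^{k+1,2}$. The products $(g-\gz)\nz^2 u$ and $b\,\nz u$ lie in $W^{k,2}$ with norms bounded by $C\Norm{u}{k+2,2}$, but the constant is not small. To absorb the top-order contribution $(g-\gz)\nz^2 u$ into the LHS, I would localize via a partition of unity on balls small enough that $\Norm{g-\gz}{L^\infty}$ is small on each patch --- which is possible by the H\"older continuity just mentioned. On each patch the perturbation then absorbs with an $\epsilon$-factor, while the commutators of the partition of unity with $\nz^2$ produce only lower-order terms. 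These are then handled by the Ehrling inequality (\ref{Ehrling}), which bounds them by $\epsilon\Norm{u}{k+2,2} + C(\epsilon)\Norm{u}{0,2}$; choosing $\epsilon$ small finishes the absorption.

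Finally, the regularity conclusion $u \in W^{k+2,2}$ starting from only $u \in L^2$ and $\A u \in W^{k,2}$ should be obtained by a bootstrap argument in the spirit of Proposition \ref{propRegul}: iterated application of the above estimate (first at lower orders of regularity) together with Sobolev multiplication yields regularity gains of $1/p$ per step until $W^{k+2,2}$ is reached. The main obstacle throughout is the non-smallness of $g - \gz$, which blocks any naive absorption of the $(g-\gz)\nz^2 u$ term; the localization procedure, justified by the well-relatedness of $\A$ to $\laplaz$ from Proposition \ref{deltaasymptdelta0}, is precisely what converts this obstruction into a small $L^\infty$ factor on each patch. The rest of the argument is bookkeeping of the cutoff commutators and Ehrling absorption.
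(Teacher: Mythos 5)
Your reduction $-\laplaz u - \kk n u = \A u + (g^{ij}-\gz^{ij})\nz^2_{ij}u - b^k\nz_k u$ is correct, and the overall strategy (perturbation of the smooth reference operator plus localization and Ehrling absorption) is in the spirit of what the paper does when it cites interior estimates and a partition of unity. However, there is a genuine gap in your localization step. You claim that on small enough balls $\Norm{g-\gz}{L^\infty}$ becomes small ``by the H\"older continuity just mentioned,'' and that Proposition \ref{deltaasymptdelta0} converts the obstruction into a small $L^\infty$ factor. Neither is true: H\"older continuity of $g-\gz$ controls the \emph{oscillation} $|{(g-\gz)(x)-(g-\gz)(y)}|$ over small balls, not the size $\sup|g-\gz|$ on small balls, and the well-relatedness condition only bounds $\Norm{g-\gz}{k+1,q}$ globally. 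If $g$ is not close to $\gz$ (which the hypotheses do not require --- $g\in\G^+_\lambda$ only forces uniform equivalence), then $\Norm{g-\gz}{L^\infty}$ stays bounded away from zero no matter how small the patch, and the absorption fails.

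The standard fix, which is what the interior estimates in \cite{GT}, \cite{HNT2009}, \cite{BH2017} actually implement, is to \emph{freeze the coefficients} at the center $x_0$ of each ball and compare $\A$ not with $-\laplaz-\kk n$ but with the constant-coefficient operator $-g^{ij}(x_0)\partial^2_{ij}-\kk n$ (equivalently, after a linear change of variables, with the Euclidean Laplacian). Then the top-order perturbation is $(g^{ij}(x)-g^{ij}(x_0))\nz^2_{ij}u$, whose coefficient \emph{is} small on small balls by the H\"older continuity of $g$ --- that is the correct use of (\ref{incholdn}). A second, more minor point: to absorb, you need the $W^{k,2}$ norm of $(g-\gz)\nz^2 u$ (or of the frozen version) controlled by $\Norm{g-\gz}{L^\infty}\Norm{u}{k+2,2}$ plus lower order, not by $\Norm{g-\gz}{k+2,2}\Norm{u}{k+2,2}$ as Lemma \ref{lemmePiotr} gives; you should isolate the highest-order term and put the remaining commutator contributions with the Ehrling inequality. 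Once those two points are corrected the argument becomes a self-contained version of the interior estimate the paper invokes, and the concluding bootstrap to pass from $u\in L^2$ to $u\in W^{k+2,2}$ is fine.
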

\textbf{Proof}: By elliptic regularity, $u \in \sob{2}{2}{loc}$ and the estimate arises from interior estimates (see \cite{GT}, \cite{HNT2009}, \cite{BH2017} for example) and partition of unity.$\cqfd$\\
%Le théorème suivant est l'analogue du Théorème 1.10 de %\cite{Bartnik1986} en asymptotiquement hyperbolique:

We need a similar result for an operator acting on $1$-forms.
Let us define $\Bz = - \mathring\Delta -\kk( n-1)$, be a Laplacian  acting on $1$-forms.
\begin{thm}{}\label{thcB}
We assume $g\in\Gplus$.
Setting $B = - \Delta -\kk( n-1)$.\\
Then $B: \sob{k+2}{2}{}(T^{*}\m) \rightarrow \sob{k}{2}{}(\T^{*}\m)$ is bounded. Furthermore, it satisfies
\begin{equation}\label{sbeB}
\Norm{Y}{k+2,2} \infeg C \,\left(\Norm{BY}{k,2} + \Norm{Y}{0,2} \right).
\end{equation}
In particular, $B$ is a semi-Fredholm operator.
\end{thm}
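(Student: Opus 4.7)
The plan is to mirror the scalar template of Proposition \ref{propelliptique}, adapted to $1$-forms. The first step is to rewrite $B$ in terms of the background connection $\nz$ and the difference tensor $A$ of (\ref{defA})--(\ref{A}). Using $\n_i Y_j = \nz_i Y_j - \ade{i}{k}{j} Y_k$ and iterating, one obtains schematically
\begin{equation*}
BY = -g^{ij}\nz^{2}_{ij} Y + g^{-1} A\,\nz Y + g^{-1}\,\nz A \cdot Y + g^{-1} A^{2}\, Y - \kk(n-1) Y,
\end{equation*}
so that $B$ is, up to lower-order terms whose coefficients have controlled regularity, a perturbation of the reference operator $\Bz = -\laplaz - \kk(n-1)$.

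For the boundedness, I would estimate each of the terms above in $\sob{k}{2}{}$ using Lemma \ref{lemmePiotr} together with Remark \ref{produitadapte}. Since $g-\gz \in \sob{k+2}{2}{}$, we have $A \in \sob{k+1}{2}{}$ and $\nz A \in \sob{k}{2}{}$, so each of the products $g^{-1} A\,\nz Y$, $g^{-1}\,\nz A\cdot Y$ and $g^{-1} A^{2} Y$ is controlled by $\Norm{Y}{k+2,2}$ with constant depending only on $\gz$ and $\Norm{g}{\F}$, in the spirit of the scalar estimate (\ref{adnbis}).

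For the estimate (\ref{sbeB}), I would introduce the natural analogue of Definition \ref{defopasympt} for operators acting on $T^{*}\m$: the principal coefficient $g^{ij}$ is uniformly equivalent to $\gz^{ij}$ by (\ref{holdcontcg}); the first-order coefficients $g^{-1}A$ live in $\sob{k+1}{2}{}$ and hence in $\sob{k}{q}{}$ for $q = 2(k+2)/(k+1)$, with $(k+1)q > n$, by Sobolev embedding; and the zeroth-order coefficients coming from $\nz A$, $A^{2}$ and $\kk(n-1)$ lie in $\sob{k}{2}{} \subset \sob{k}{q(k+1)/(k+2)}{}$, exactly as in the verification carried out in Proposition \ref{deltaasymptdelta0}. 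Once $B$ is recognized as well related to $\Bz$, the scalar argument of Proposition \ref{propelliptique} transfers verbatim: local interior elliptic regularity on coordinate balls (\cite{GT}, \cite{HNT2009}, \cite{BH2017}) combined with a partition of unity produces (\ref{sbeB}).

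The semi-Fredholm property is then automatic from Lemma \ref{lemestellip}, and can also be extracted directly from (\ref{sbeB}) via the Rellich--Kondrachov compact embedding $\sob{k+2}{2}{} \subset L^{2}$: the unit ball of $\ker B$ is precompact, so the kernel is finite-dimensional, and closedness of $\Im B$ follows from the standard argument applied to a sequence $Y_n$ orthogonal to $\ker B$ with $B Y_n$ Cauchy in $\sob{k}{2}{}$. The main difficulty I anticipate is purely book-keeping: carefully matching the regularities of the commutator terms $A,\, \nz A,\, A^{2}$ with the structural requirements of the $1$-form analogue of Definition \ref{defopasympt}; once this is in place, the proof of the scalar case applies line by line.
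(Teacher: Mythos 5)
Your proposal is correct and coincides with what the paper intends: the paper in fact gives no written proof of Theorem~\ref{thcB}, treating it as the $1$-form analogue of Proposition~\ref{propelliptique}, which is exactly what you do — decompose $B$ into $\Bz$ plus lower-order terms with coefficients $g^{-1}A\in W^{k+1,2}$ and $g^{-1}(\nz A + A^2)\in W^{k,2}$, check the structural requirements of Definition~\ref{defopasympt}, and invoke interior elliptic estimates plus a partition of unity. One minor caveat: Lemma~\ref{lemestellip} as stated concerns operators with regular coefficients, so it is better not to lean on it for the semi-Fredholm conclusion here; your self-contained route via Rellich--Kondrachov and the estimate (\ref{sbeB}) is the right one.
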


We are now ready to prove Fredholm property of the operator $F$ associated to our special variations.
\begin{thm}{}\label{thc}
When $k+2>\frac n2$,  the operator$$F: {\sob{k+2}{2}{}(\m) \times \sob{k+2}{2}{}(\T\m)} \rightarrow \leb{2}{}(\T^{*} \otimes \Lambda^{3} T^{*}\m) :=\L^{*}$$ is bounded. Furthermore, it satisfies
\begin{equation}\label{sbedelta}
\Norm{(y,Y)}{k+2,2} \infeg C \, \left(\Norm{F(y,Y)}{k,2} + \Norm{(y,Y)}{0,2} \right).
\end{equation}
In particular, $F$ is a semi-Fredholm operator.
\end{thm}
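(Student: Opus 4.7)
The strategy is to recognize that the principal parts of $F_{0}$ and $F_{i}$ are, up to the nonvanishing factor $\sqrt{g}$, the operators $\A=-\Delta-\kk n$ acting on $y$ and $B=-\Delta-\kk(n-1)$ acting on $Y$, and to reduce everything to the elliptic estimates of Proposition \ref{propelliptique} and Theorem \ref{thcB}. The remaining terms in the expressions for $F_{0}$ and $F_{i}$ are either zero-order couplings of the form $\phiogpi\, y$, $\phiigpi\, y$, $\sob{k}{2}{}[y+Y]$, or first-order couplings such as $\tau\,\div Y\,\sqrt{g}$, $\sob{k+1}{2}{}\nz Y$, and $\sob{k+1}{2}{}\nz y$. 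The plan is to absorb each of these into the left-hand side by a careful interpolation, using Lemma \ref{Ehrlingadapte} and the product estimates of Lemma \ref{lemmePiotr} and Remark \ref{produitadapte}.

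First I would check boundedness of $F$ from $\sob{k+2}{2}{}\times\sob{k+2}{2}{}$ to $\L^{*}$. Boundedness of the principal parts follows because $\A$ and $B$ are well related to $\laplaz$ (see Proposition \ref{deltaasymptdelta0} and the analogous statement on $1$-forms), multiplied by $\sqrt{g}\in\sob{k+2}{2}{}$ with $g\in\Gplus_{\lambda}$. The zero-order pieces are controlled via Proposition \ref{phidef}, which gives $\phiogpi,\phiigpi\in\sob{k}{2}{}$, combined with the product map $\sob{k,2}{}\times\sob{k+2,2}{}\to\sob{k,2}{}$ from Lemma \ref{lemmePiotr}. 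The first-order terms $\div Y\,\sqrt g$ and $\sob{k+1}{2}{}\nz Y$ (resp.\ $\sob{k+1}{2}{}\nz y$) sit in $\sob{k,2}{}$ by the second inequality in Remark \ref{produitadapte}.

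For the elliptic estimate, I would rearrange $F_{0}$ and $F_{i}$ to isolate $\A y$ and $BY$, apply Proposition \ref{propelliptique} to $y$ and Theorem \ref{thcB} to $Y$, and bound the remainder term by term. The mixed first-order couplings $\div Y$ and $\nabla y$ are immediately controlled by $\|Y\|_{k+1,2}$ and $\|y\|_{k+1,2}$ respectively, and the Ehrling inequality \eqref{Ehrling} gives
\[
\Norm{Y}{k+1,2}\infeg \epsilon\,\Norm{Y}{k+2,2}+C(\epsilon)\,\Norm{Y}{0,2},
\]
and similarly for $y$. For a zero-order product $\phiogpi\,y$, I would use the variant $\Norm{uv}{k,2}\leq\Norm{u}{k,2}\Norm{v}{k,p}$ (valid for some $p$ with $kp>n$, which exists precisely because $k+2>n/2$) to obtain $\Norm{\phiogpi\,y}{k,2}\leq C\Norm{y}{k,p}$, and then Lemma \ref{Ehrlingadapte} turns this into $\epsilon\Norm{y}{k+2,2}+C(\epsilon)\Norm{y}{0,2}$. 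The remaining $\sob{k}{2}{}[y+Y]$ and $\sob{k+1,2}{}\nabla(\cdot)$ terms are treated the same way. Adding the two resulting inequalities and choosing $\epsilon$ sufficiently small absorbs every occurrence of the $\sob{k+2}{2}{}$ norms on the right-hand side into the left, which yields \eqref{sbedelta}.

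Finally, semi-Fredholmness is the standard consequence of the a priori estimate \eqref{sbedelta} together with the compact embedding $\sob{k+2}{2}{}\subset L^{2}$ (Rellich--Kondrachov): any bounded sequence in $\ker F$ has an $L^{2}$-convergent subsequence which by \eqref{sbedelta} becomes $\sob{k+2}{2}{}$-convergent, giving finite-dimensional kernel, and a standard Peetre-type argument yields closed range. The main obstacle in this plan is the bookkeeping of the low-regularity products: the coefficients $\phiogpi$, $\phiigpi$ and the auxiliary $\sob{k,2}{}$ and $\sob{k+1,2}{}$ coefficients are not $L^{\infty}$, so every absorbing estimate depends on interpolating between $\sob{k+2,2}{}$ and $L^{2}$ through an intermediate $L^{p}$ or $\sob{k,p}{}$ norm whose existence relies delicately on the hypothesis $k+2>n/2$.
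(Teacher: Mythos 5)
Your proposal matches the paper's proof: in both cases one establishes boundedness of $F$ via the product estimates and Sobolev embeddings already used for $\phibf$, then isolates $\A y$ and $BY$ in $F_{0}$, $F_{i}$, applies Proposition \ref{propelliptique} and Theorem \ref{thcB}, controls the mixed lower-order terms with Lemma \ref{Ehrlingadapte}, Remark \ref{produitadapte}, the Ehrling inequality, and $\phiogpi,\phiigpi\in W^{k,2}$, and finally adds the two estimates; semi-Fredholmness then follows in the standard way from \eqref{sbedelta} and Rellich--Kondrachov. The only difference is cosmetic: you make the $\epsilon$-absorption after adding the two component estimates fully explicit, whereas the paper's displays \eqref{fredA}--\eqref{fredB} leave that step implicit.
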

\textbf{Proof}:
Starting from the definition of $F$, we find like before
$$\Norm{F(y,Y)}{k,2}\infeg C \, \Norm{(y,Y)}{k+2,2} ,$$
where $C$ is a constant depending on $\gz$ and $\Norm{g}{\F}$.\\
Hence $F$ is a bounded (continuous) operator. Plugging the expression of $F_{0}(y, Y^{i})$ in (\ref{estiAu2}) and using H\"older inequality (\ref{h1}) , lemma \ref{Ehrlingadapte}  (see remark \ref{produitadapte}) ,Ehrling inequality (\ref{Ehrling}) along with  and $\phiogpi\in \sob{k}{2}{}$,
\begin{eqnarray}\label{fredA}
\Norm{y}{k+2,2} &\infeg& C \, \left(\Norm{-\Delta y -\kk ny}{k,2} + \Norm{y}{0,2}\right) \nonumber\\
%&\infeg& C \, \left(\Norm{F_{0}(y, Y)}{2} + \Norm{ \phiogpi y}{2} + \Norm{(y,Y)}{2,0} + \Norm{ {\div }Y}{2} + \epsilon \Norm{Y}{2,2,0} + \Norm{y}{2;\Omega_{R}}\right) \nonumber\\
&\infeg& C \, \left(\Norm{F_{0}(y, Y)}{k,2} + \Norm{(y,Y)}{0,2}\right).
\end{eqnarray}
Plugging the expression of $F_{i}(y, Y^{i})$ in (\ref{sbeB}) and using H\"older inequality (\ref{h1}) , (\ref{uinfdk}) ,Ehrling inequality (\ref{Ehrling}) along with lemma \ref{lemmePiotr} and $\phiigpi\in \sob{k}{2}{}$,
\begin{eqnarray}\label{fredB}
\Norm{Y}{k+2,2} &\infeg& C \, \left(\Norm{-\Delta Y -\kk (n-1)Y}{k,2} + \Norm{Y}{0,2}\right) \nonumber\\
%&\infeg& C \, \left(\Norm{F_{i}(y, Y)}{2} + \Norm{ \phiigpi y}{2} + \Norm{(y,Y)}{2,0} + \epsilon \Norm{Y}{2,2,0} + \Norm{Y}{2;\Omega_{R}}\right) \nonumber\\
%&\infeg& C \, \left(\Norm{F_{i}(y, Y)}{2} + \Norm{(y,Y)}{2,0} + \epsilon \Norm{Y}{2,2} + \Norm{Y}{2;\Omega_{R}}\right) \nonumber\\
&\infeg& C \, \left(\Norm{F_{i}(y, Y)}{k,2} + \Norm{(y,Y)}{0,2} + \Norm{Y}{0,2}\right).
\end{eqnarray}
Finally, combination of (\ref{fredA}) and (\ref{fredB}) gives (\ref{sbedelta}). It is now standard to deduce from the estimate (\ref{sbedelta})  that $F$ is semi-Fredholm.$\cqfd$\\[.3cm]

We can now argue like in the proof of corollary \ref{corFredholm} to prove that $F$ is a Fredholm operator.
We approximate the metric $g$ by a smooth one $g_\epsilon$ to produce an operator $F_\epsilon$ close to $F$.
Now $F_\epsilon$ and its adjoint $F^{*}_\epsilon$ have similar structure
$$F^{*}_\epsilon: \sob{-k}{2}{}(\T)\rightarrow \sob{-k-2}{2}{}(\T^{*} \otimes \Lambda^{3}T^{*}\m).$$
Let $\w{F^{*}}_\epsilon$ be the restriction of $F^{*}_\epsilon$ defined as follows
$$\w{F^{*}}_\epsilon: \sob{k+2}{2}{}(\T) \rightarrow \sob{k}{2}{}(\T^{*} \otimes \Lambda^{3}T^{*}\m).$$
Booth  $F_\epsilon$ and $\w{F^{*}}_\epsilon$  satisfies an estimate like (\ref{sbedelta}) and because of elliptic regularity the kernel of $\w{F^{*}}_\epsilon$ is  the same as the kernel of ${F^{*}}_\epsilon$.
We conclude that  $F_\epsilon$ is Fredholm.  $F$  being semi-Fredhom and a limit  of Fredhom operators, it is Fredholm. Thus  Im$F$ is closed and its cokernel is finite dimensional. 

 {We can now close the proof 
of Theorem \ref{mainthmmanifold} by the following   argument.
}\\
 The space $\text{Coker}  F = \quotient{\L^{*}}{\text{Im} F}$ is finite  dimensional.
The operator $F$ satisfies
$$\text{Im} F \subset \text{Im} D \phigpi \subset \L^{*}.$$
%et on peut écrire $$\L^{*} = \text{Im} F \:\oplus \text{ker} F^{*}$$ 
Let $\pi$ be the canonical projection:
$$\pi : \L^{*} \rightarrow \quotient{\L^{*}}{\text{Im} F}.$$
$\pi (\text{Im} D \phigpi)$ is a subspace of a finite dimensional vector space, so is closed.  Because it is preimage of a closed set by a continuous map, $\text{Im} (D \phigpi)$ is closed.
(an equivalent argument is to note that $\L^{*}=\text{Im} F\oplus G$ for a finite dimensional subspace $G$ so $ \text{Im} D \phigpi =\text{Im} F\oplus (G\cap  \text{Im} D \phigpi$).

 {This ends the proof of the manifold structure of $\contrainte$, as a smooth submanifold of $\F$. In fact, all no KID's fibers of $\phibf$ are smooth submanifolds of $\F$. $\cqfd$}
\appendix
\section{A note for the scalar curvature case }
The manifold structure on the fiber of the scalar curvature operator is not obtained directly using $\phio(g,0)$, because the constant $\Lambda$ need  to be replaced by a function $f$. Instead, one may consider the map
$$\phi(g)=(R(g)-2f)\sqrt{g},$$
whose linearisation is
$$
D\phi(g)h= (\n^{i} \n^{j} h_{ij} - \laplag  {\tr}_{g}h) \sqrt{g} - h_{ij} \lbrack R^{ij} - \tfrac{1}{2}  (R(g) - 2 f) g^{ij} \rbrack \sqrt{g},
$$
and the adjoint  given by
$$
D\phi(g)^*N=\lbrack \n^{i} \n^{j} N - g^{ij}\laplag N - \lbrack R^{ij} - \tfrac{1}{2}  (R(g) - 2 f) g^{ij} \rbrack N \rbrack \sqrt{g}.
$$
Like for the operator $T$ defined before, the kernel of $D\phi(g)^*$ is the same than the one of
$$
N\mapsto \nabla \nabla N -\lbrack Ric(g)-\frac{1}{2(n-1)}(R(g)+2f)g\rbrack N.
$$
If $R(g)=2f$ then $D\phi(g)^*=DR(g)^*$  thus the theorem \ref{mainthmmanifoldintroscal} is obtained with $\epsilon=2f$ and the preimage of $0$ by $\phi$.

\section{About non compact manifolds with special ends}\label{noncomp}
As already explained in the introduction, the paper is written in the spirit to an easy adaptation to some non compact setting such as the asymptotically Euclidian one or the asymptotically hyperbolic context.
In such a case, the constraint operator $\phibf$  is studied for Riemannian metrics of the form $g= \gz + h$ with  $g$  ``asymptotic'' to a smooth model metric $\gz \, , \; \ie \norm{g- \gz}{\gz} = \norm{h}{\gz}$ is controlled in a suitable weighted space .\\
In this section, we explain the choices made before and mention  two  natural operators  related to the no KID's condition.

We consider a smooth  metric $\gz$ on $\m$ as model.  One can think of $\gz$ has a metric of constant sectional curvature $\kk$ on any end but this is a particular case. One  work in some weighted  Sobolev spaces, say $
\sob{k}{2}{weight} $, the weight is a real describing the asymptotic behaviour, it can change from line to line from now on.
We  usually ask $\gz$ to satisfy the following
\begin{equation}
Riem \, \gz-\frac{\kk}2 \gz \owedge \gz \in \sob{k}{2}{weight} \label{integriem},
\end{equation}
where $(\gz \owedge \gz)_{ijkl}=-2(\gz_{il} \gz_{jk} - \gz_{ik} \gz_{jl})$,
or the weaker one,
\begin{equation}
Ric \, \gz-\kk (n-1) \gz \in \sob{k}{2}{weight} \label{integriccg},
\end{equation}
or only
\begin{equation}\label{integscal}
R(\gz) -  n(n-1)\kk\in \sob{k}{2}{weight}.
\end{equation}

We fix a real parameter $\tau$ and we  set
\begin{equation}\label{defKz}
\Kz = \tau \gz \,.
\end{equation}
The cosmological constant $\Lambda$ is normalized here in dimension $n$ by
\begin{equation}\label{cosmocg}
2 \Lambda = n (n-1)(\tau^{2}+\kappa),
\end{equation}
so that $\phibf(\gz,\Kz)=0$ if $R(\gz)=\kk n(n-1)$

From the choice of $\Kz$, the conjugate momentum $\piz$ is then
\begin{equation}\label{defpiz}
\piz^{ij} =  (\Kz^{ij} -  {\tr}_{\gz}\Kz \gz^{ij}) \, d\mu(\gz) = \tau (1-n)\gz^{ij}\, d\mu(\gz) .
\end{equation}
Note that we have  $\nz \piz = \nz \Kz = 0$. If $\Ric(\mathring g)=\kk(n-1)\mathring g$ then 
$$
\mathring \Pi=-\frac12(n-1)(n-4)\tau^2\gz^{-1},
$$
$$
\mathring E=\frac12(n-1)(2k+n\tau^2)\gz^{-1}
$$
$$
\mathring \Pi-\mathring E=-(n-1)[(n-2)\tau^2+\kk]\gz^{-1}.
$$
If $h$ and $p$ are like in (\ref{defhp}) with $Y=0$ we see that
$$
F_0(y,0)=2(1-n)(\mathring\Delta y+\kk n y), 
$$
and if $h$ and $p$ are like in (\ref{defhp}) with $y=0$ we see that
$$
F_i(0,Y)=2(\mathring\Delta Y_i+\kk(n-1) Y_i).
$$
This explain the choice of the operators $A$ and $B$ of section \ref{SectionStructure}

In the non compact setting, the no KID's condition is usually not assumed but has to  be proved for a certain range of weight.
In the course, some asymptotic inequalities are useful, we introduce two operators  who have to be studied.

\noindent {\bf The operator $\mathring{\mathcal U}$}\label{sectionU}\\
The operator $\Uz$, acting on 1-forms, inspired by the formula (\ref{b29}) and so related to the covariant derivatives of Killing operator $S$. It may be used  to estimate the $W^{k+2,2}_{weight}$-norm of a 1-form $X$ with the $W^{k,2}_{weight}$-norms of $\Sz(X)$ and $(X)$ , so with the $W^{k+1,2}_{weight}$-norm of $\Sz(X)$. 
Because of the constant sectional  curvature model, we introduce
the operator  { $\mathring{\mathcal U}$}  defined on 1-forms by
%$$\Uz(X) = \nz^{2}X - Riem_{cc-1}\gz \,X \, ,$$
%où $\;Riem_{cc-1}\gz \;$ désigne le tenseur de Riemann de l'espace hyperbolique (à courbure constant -1).\\
%En coordonnées,
\begin{equation}\label{defUz}
\mathring{\mathcal U}_{kji}(X) = \nz^{2}_{kj}X_{i} +\kk( \gz_{jk} X_{i} - \gz_{ik} X_{j}).
\end{equation}

An important step will be to prove that for any smooth one form $X$ supported on an end,
$$
||X||_{2, weight}\leq C(||\mathring{\mathcal U}(X)||_{0,weight}+||\Sz(X)||_{0,weight})
$$

\noindent {\bf The operator $\mathring{\mathcal T}$}\label{sectionT}\\
Inspired by the case $Ric(\gz)=\kk(n-1)$ and the operator $\Tz$, we define  the Obata type operator 
$$
\mathring{\mathcal T}(N)=\nz\nz N+\kk\gz N
$$
In  the same way, it has to be proven that  for
any smooth function $N$ supported on an end,
$$
||N||_{2, weight}\leq C ||\mathring{\mathcal T}(N)||_{0,weight}
$$

\section{Fredholm properties for elliptic operators with rough coefficients}
We start with a result who can be found in \cite{BH2017} or \cite{HNT2009} for instance (see also  \cite{Muller2017} and \cite{Muller2017erratum})
\begin{thm}\label{thSF}
Let $k\in\N$ such that $k+2>\frac n 2$ and $g\in H^{k+2}$. Let $P=\Delta+c$ with $c\in H^k$.
For all $j\in (-k,k+2]$ there exist a constant $C$  such that   all
$u\in H^{j}$  satisfies  the estimate
$$
\Norm{u}{j,2}\leq C (\Norm{Pu}{j-2,2}+\Norm{u}{j-2,2}).
$$
In particular $P:H^j\longrightarrow H^{j-2}$ is semi-Fredholm (with finite dimensional kernel and closed range).
\end{thm}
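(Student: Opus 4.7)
The plan is to first establish the a priori estimate at the top regularity level, push it down to the bottom regularity level by duality, fill in the intermediate range by interpolation, and finally extract the semi-Fredholm property from Rellich--Kondrachov compactness.

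\textbf{Step 1 (top-order estimate $j = k+2$).} First I would check that $P = \Delta + c$ is genuinely elliptic: its principal symbol is given by $g^{ij}$, which is uniformly positive definite because $g \in \Gplus$ and $g \in C^{0}$ by the Morrey embedding \eqref{incholdn}. By Proposition \ref{deltaasymptdelta0}, $P$ is well related to $\laplaz$, so Lemma \ref{lemestellip} (interior elliptic estimates) combined with a partition of unity on $\m$ yields
$$
\|u\|_{k+2,2} \leq C\bigl(\|Pu\|_{k,2} + \|u\|_{0,2}\bigr) \leq C\bigl(\|Pu\|_{k,2} + \|u\|_{k,2}\bigr),
$$
which is the claim at $j = k+2$.

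\textbf{Step 2 (duality for $j = -k$).} The formal $L^{2}(d\mu(\gz))$-adjoint $P^{*}$ has the same rough structure: writing $\Delta$ in terms of $\nz$ through \eqref{expressionDelta} and integrating by parts produces $P^{*} = \Delta + c^{*}$, where $c^{*}$ is built from $c \in H^{k}$ and from contractions of $g^{-1} \in H^{k+2}$ with $\nz g \in H^{k+1}$, hence $c^{*} \in H^{k}$ by Lemma \ref{lemmePiotr}. Therefore $P^{*}$ itself satisfies the Step~1 estimate. Writing $\|u\|_{-k,2} = \sup_{\phi \in H^{k},\,\|\phi\|_{k,2}=1} \langle u,\phi\rangle$ and transferring the derivative via $\langle Pu,\psi\rangle = \langle u, P^{*}\psi\rangle$ (legitimate for $u \in H^{-k}$ because $P^{*} : H^{k+2} \to H^{k}$ is continuous), the top-regularity estimate for $P^{*}$ becomes the bottom-regularity estimate
$$
\|u\|_{-k,2} \leq C\bigl(\|Pu\|_{-k-2,2} + \|u\|_{-k-2,2}\bigr).
$$

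\textbf{Step 3 (interpolation and semi-Fredholm).} For intermediate $j \in (-k,k+2)$ I would interpolate between the two endpoint estimates above. The only nontrivial ingredient is that multiplication by each rough coefficient of $P$ (namely the components of $g^{ij}$, of $\nz g$, and the potential $c$) defines a bounded map $H^{j} \to H^{j-2}$ for every $j$ in the claimed range; this follows from Lemma \ref{lemmePiotr} and its dual, and is precisely where the hypothesis $k+2 > n/2$ is used. Once the unified estimate $\|u\|_{j,2} \leq C(\|Pu\|_{j-2,2} + \|u\|_{j-2,2})$ is in hand, the Rellich--Kondrachov theorem on the compact manifold $\m$ gives that the inclusion $H^{j} \hookrightarrow H^{j-2}$ is compact, so the second right-hand term is compact relative to the norm on the left; by the standard Peetre-type lemma this forces finite-dimensional kernel and closed range, \emph{i.e.} $P \colon H^{j} \to H^{j-2}$ is semi-Fredholm.

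\textbf{Main obstacle.} The delicate point is Step~2 together with the multiplication estimates that feed Step~3: one must justify that the $\nz g$ and $c$ coefficients, which have very limited regularity, genuinely act on negative-order Sobolev spaces down to $H^{-k}$. This is exactly the borderline case governed by $k+2 > n/2$, and it is also what dictates the open lower endpoint $j > -k$ (one cannot push to $j = -k-1$ without losing control of the multiplications).
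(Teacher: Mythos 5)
Before addressing the details, note that the paper itself does not prove Theorem \ref{thSF}: it is stated with a pointer to \cite{BH2017}, \cite{HNT2009}, \cite{Muller2017}, so there is no in-paper proof to compare against, only the literature. Your Step 1 is fine. The genuine trouble starts in Step 2. First, the formal $L^2(d\mu(\gz))$-adjoint of $\Delta_g + c$ is \emph{not} of the form $\Delta_g + c^*$: $\Delta_g$ is self-adjoint with respect to $d\mu(g)$, not $d\mu(\gz)$, so the $\gz$-adjoint picks up a genuine first-order drift term, $\Delta_g^{*}v = \Delta_g v - 2\sqrt{g}^{\,-1}\langle\nabla\sqrt{g},\nabla v\rangle + (\dots)v$; $P^{*}$ is still of the class ``well related to $\laplaz$'' in the sense of Definition \ref{defopasympt}, but the identification $P^{*}=\Delta+c^{*}$ is false. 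That is fixable. What is not fixable as written is the duality mechanism itself: to bound $\|u\|_{-k,2}=\sup_{\|\phi\|_{k,2}=1}\langle u,\phi\rangle$ via $\langle Pu,\psi\rangle=\langle u,P^{*}\psi\rangle$ you must represent an arbitrary unit $\phi\in H^{k}$ as $P^{*}\psi+w$ with $\|\psi\|_{k+2,2}$ controlled and $w$ ranging in a complement of $\Im P^{*}$. The Step 1 estimate only gives that $P^{*}:H^{k+2}\to H^{k}$ is semi-Fredholm (finite-dimensional kernel, closed range); it says nothing about the \emph{cokernel}. If the complement $W$ is infinite-dimensional you can only estimate $|\langle u,w\rangle|\leq\|u\|_{-k,2}\|w\|_{k,2}$, which reproduces the left-hand side and is circular; the argument closes only when $W$ is finite-dimensional and consists of $H^{k+2}$-functions, i.e. when $P^{*}$ is actually Fredholm, which you have not established. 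Tellingly, the theorem's range is the half-open interval $(-k,k+2]$ and Corollary \ref{corFredholm} is careful to distinguish boundedness on $[-k,k+2]$ from the semi-Fredholm estimate asserted only for $j\neq-k$: your Step 2 claims the estimate exactly at the excluded endpoint, which should be read as a warning sign rather than a bonus.

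Step 3 is also considerably thinner than it looks. The inequality $\|u\|_{j,2}\leq C(\|Pu\|_{j-2,2}+\|u\|_{j-2,2})$ is not of the form ``$A$ bounded $X\to Y$'' and does not interpolate by Riesz--Thorin-type theorems: it is an implicit two-sided estimate with a compact remainder, and both the constant and the remainder norm move with $j$. The workable routes are (i) interpolate the \emph{resolvent}: show $P+\mu$ is invertible $H^{k+2}\to H^{k}$ and $H^{-k}\to H^{-k-2}$ for $\mu$ large, interpolate $(P+\mu)^{-1}$, and undo the shift; (ii) redo the multiplication estimates carefully at each level $j$, which is what \cite{HNT2009} and \cite{BH2017} actually do and which is where the strict inequality $j>-k$ comes from; or (iii) the route the paper itself uses in Corollary \ref{corFredholm}: approximate $(g,c)$ by smooth $(g_\epsilon,c_\epsilon)$, get classical elliptic estimates at every level $j$ for $P_\epsilon$, show $\|P-P_\epsilon\|\to 0$ in operator norm $H^{j}\to H^{j-2}$, and invoke stability of the semi-Fredholm estimate under small perturbations. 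Any of these would replace both of your Steps 2 and 3 with something rigorous; option (iii) is probably the cheapest and also bypasses the adjoint computation entirely.
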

We are now ready to deduce (see  also \cite{Muller2017} with \cite{Muller2017erratum})
\begin{cor}\label{corFredholm}
Let $k\in\N$ such that $k+2>\frac n 2$ and $g\in H^{k+2}$. Let $P=\Delta+c$ with $c\in H^k$.
For all $j\in (-k,k+2]$ the operator $P:H^j\longrightarrow H^{j-2}$ is Fredholm.
\end{cor}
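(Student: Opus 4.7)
\textbf{Proof plan for Corollary \ref{corFredholm}.}
The strategy is to upgrade the semi-Fredholm conclusion of Theorem \ref{thSF} to full Fredholmness by approximating the rough data $g$ and $c$ by smooth ones and exploiting the openness of the Fredholm class in the operator-norm topology. By Theorem \ref{thSF}, $P:H^j\to H^{j-2}$ already has finite-dimensional kernel and closed range for every $j\in(-k,k+2]$, so only finite-dimensionality of the cokernel remains to be established. Pick smooth approximations $g_\epsilon\to g$ in $H^{k+2}$ and $c_\epsilon\to c$ in $H^k$ (smooth tensors/functions are dense in these Sobolev spaces on the compact manifold $\m$) and set $P_\epsilon=\Delta_{g_\epsilon}+c_\epsilon$.

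For each $\epsilon>0$, $P_\epsilon$ is an elliptic operator with $\cinf$ coefficients on a compact manifold, so classical elliptic theory gives that $P_\epsilon:H^j\to H^{j-2}$ is Fredholm for every $j\in\R$: elliptic regularity forces the kernel to be a finite-dimensional subspace of $\cinf(\m)$, while the cokernel is identified via the $L^2(d\mu(\gz))$-duality with the kernel of the formal transpose, another smooth elliptic operator, hence also finite-dimensional (this is exactly the pattern used in the proof of Theorem \ref{mainthmmanifold} for $F_\epsilon$ and $\widetilde{F^*}_\epsilon$). Next, one verifies that $P_\epsilon\to P$ in the norm of $L(H^j,H^{j-2})$. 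Expanding $\Delta_g-\Delta_{g_\epsilon}$ via (\ref{expressionDelta})-(\ref{A}) as a combination of terms linear in $g-g_\epsilon$ and $\nz(g-g_\epsilon)$, and applying the Sobolev multiplication estimates of Lemma \ref{lemmePiotr} together with Remark \ref{produitadapte}, one obtains
$$
\|(P-P_\epsilon)u\|_{j-2,2}\leq C\bigl(\|g-g_\epsilon\|_{k+2,2}+\|c-c_\epsilon\|_{k,2}\bigr)\|u\|_{j,2}
$$
for $j$ in the non-negative Sobolev range; for the remaining negative values of $j$ one routes through the $L^2(d\mu(\gz))$-duality between $H^j$ and $H^{-j}$, noting that the formal transpose of $P-P_\epsilon$ has the same structural form with coefficients of comparable regularity.

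To conclude, Fredholm operators form an open subset of $L(H^j,H^{j-2})$ in the operator-norm topology, so for $\epsilon$ small enough $P$ lies in a neighborhood of the Fredholm operator $P_\epsilon$, which forces $P$ itself to be Fredholm. The main technical obstacle is establishing the operator-norm convergence \emph{uniformly} across the entire range $j\in(-k,k+2]$: the interior values reduce directly to applications of the rough multiplication estimates, but the boundary values, in particular $j$ near $-k$, require careful duality bookkeeping to transfer the estimate to the transposed operator while verifying that its coefficients remain in Sobolev spaces where the multiplication Lemma \ref{lemmePiotr} still applies.
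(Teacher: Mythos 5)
Your overall strategy matches the paper's: approximate $g$ and $c$ by smooth data, observe that the resulting $P_\epsilon$ is Fredholm by classical elliptic theory, verify $\|P-P_\epsilon\|_{L(H^j,H^{j-2})}\to 0$, and transfer Fredholmness back to $P$. The operator-norm convergence step and the identification of the cokernel of $P_\epsilon$ via the formal transpose are both sound, and the duality bookkeeping for negative $j$ is the right thing to worry about.

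However, your concluding step has a genuine logical gap. You invoke only the openness of the Fredholm class in $L(H^j,H^{j-2})$, arguing that for $\epsilon$ small enough $P$ lies in a Fredholm neighborhood of $P_\epsilon$. But openness gives a radius $r(P_\epsilon)>0$ that depends on $P_\epsilon$, and there is no reason this radius stays bounded away from zero as $\epsilon\to 0$: it may shrink faster than $\|P-P_\epsilon\|$. Indeed, a norm limit of Fredholm operators need not be Fredholm (consider $\tfrac1n S\to 0$ with $S$ the unilateral shift on $\ell^2$: each $\tfrac1n S$ is Fredholm, the limit is not). The extra ingredient you established but did not actually use is that $P$ itself is already semi-Fredholm by Theorem \ref{thSF}. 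The correct closing argument, and the one the paper makes, is the stability of the index on the set of semi-Fredholm operators (Kato): the index is locally constant there, so for $\epsilon$ small the index of $P$ equals that of the Fredholm operator $P_\epsilon$, hence is finite, and since $\dim\ker P<\infty$ and the range is closed this forces $\dim\operatorname{Coker} P<\infty$. In short, you need the statement ``a \emph{semi-Fredholm} limit of Fredholm operators is Fredholm,'' not merely ``Fredholm operators form an open set.''
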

\textbf{Proof}:
We first recall that for any  $j\in[-k,k+2]$ the operator $P$ is well defined and bounded from 
$H^{j}$ to $H^{j-2}$(  \cite{BH2017}, Theorem A.1 with  $p=p_1=p_2=2$, $s_1=k+2$ and $s=s_2=j$).
From the theorem \ref{thSF},  if moreover $j\neq -k$, $P$ is semi-Fredholm so its (eventually infinite) index is well defined.

The metric $g$ and the 0-order term $c$   can be approximated by smooth ones to produce a family of operators $P_\epsilon$ with smooth coefficients  and such that
$$
\|P-P_\epsilon\|\leq \epsilon,
$$
where the norm is the norm of operators from $H^{j}$ to $H^{j-2}$.
We have  the usual elliptic regularity estimate 
$$
\Norm{v}{j,2}\leq C (\Norm{P_\epsilon v}{j-2,2}+\Norm{v}{j-2,2}),
$$
in particular $P_\epsilon$ is semi-Fredholm with finite dimensional kernel. Its formal $L^2$ adjoint (for the measure $d\mu_{\mathring g}$ for instance) $P_\epsilon^*:H^{2-j}\longrightarrow H^{-j}$ has a similar structure, verify the same kind of estimate so is also semi-Fredholm with finite dimensional kernel.
We easily deduce that $P_\epsilon$ is  a Fredhom operator.
For $\epsilon$ small enough, the  (finite) index of  $P_\epsilon$ is equal to that of $P$ (see \cite{Kato1958}) so  the index of $P$ is finite thus $P$ is Fredholm.

$\cqfd$

\begin {remark}\em
We have just used that a  semi-Fredholm limit of Fredholm  operators is Fredholm. It is clear that the proof can be transposed  to  more general operators like those in  definition
\ref{defopasympt}.
\end{remark}
%bibliographie bibtex
\bibliography{biblioarticle}
\bibliographystyle{abbrv}
%\addcontentsline{toc}{chapter}{Bibliographie}

\end{document}